\newtheorem{theorem}{Theorem}[section]
\newtheorem{corollary}{Corollary}[section]
\newtheorem{lemma}{Lemma}[section]
\theoremstyle{definition}
\numberwithin{equation}{section}
\newcommand{\annulus}{\mathbb{A}}
\newcommand{\disc}{\mathbb{D}}
\newcommand{\tkk}{\tilde{k}(t)}
\newcommand{\kk}{k(t)}
\newcommand{\varep}{\varepsilon}
\newcommand{\hh}{h(t)}
\newcommand{\hhpm}{h_{\pm}(t)}
\newcommand{\hhp}{h_{+}(t)}
\newcommand{\hhm}{h_{-}(t)}
\newcommand{\mom}{\mu}
\newcommand{\sech}{\operatorname{sech}}
\newcommand{\csch}{\operatorname{csch}}
\newcommand{\sphere}{\mathbb{S}^{2}}
\newcommand{\dlef}{\mathsf{\Lambda}}
\newcommand{\Om}{\Omega}
\newcommand{\lne}{\mathcal{E}}
\newcommand{\imt}{\iota}
\newcommand{\cano}{\mathscr{K}}
\newcommand{\om}{\omega}
\newcommand{\sth}{{\vphantom{}}^{\ast}\!{h}}
\newcommand{\dad}{d^{\ast}}
\newcommand{\vol}{\mathsf{vol}}
\newcommand{\delbar}{\bar{\del}}
\newcommand{\sR}{\mathscr{R}}
\newcommand{\Ga}{\Gamma}
\newcommand{\lap}{\Delta}
\renewcommand{\j}{\mathsf{i}}
\renewcommand{\sp}{\sigma}
\newcommand{\la}{\lambda}
\newcommand{\ep}{\epsilon}
\newcommand{\cinf}{C^{\infty}}
\newcommand{\si}{\sigma}
\newcommand{\pr}{\partial}
\newcommand{\fd}{\mathscr{F}}
\newcommand{\sign}{\operatorname{sgn}}
\newcommand{\integer}{\mathbb{Z}}
\newcommand{\lie}{\mathfrak{L}}
\newcommand{\re}{\text{Re\,}}
\newcommand{\al}{\alpha}
\newcommand{\be}{\beta}
\newcommand{\ga}{\gamma}
\newcommand{\proj}{\mathbb{P}}
\newcommand{\del}{\partial}
\newcommand{\tensor}{\otimes}
\newcommand{\rea}{\mathbb R}
\newcommand{\com}{\mathbb C}
\begin{document}
\title[Ricci flows and vortex-like equations]{Ricci flows on surfaces related to the Einstein Weyl and Abelian vortex equations}
\author{Daniel J. F. Fox} 
\address{Departamento de Matem\'atica Aplicada\\ EUIT Industrial\\ Universidad Polit\'ecnica de Madrid\\Ronda de Valencia 3\\ 28012 Madrid Espa\~na}
\email{daniel.fox@upm.es}

%\date{\today}
%\subjclass[2000]{Primary ; Secondary }

\begin{abstract}
There are described equations for a pair comprising a Riemannian metric and a Killing field on a surface that contain as special cases the Einstein Weyl equations (in the sense of D. Calderbank) and a real version of a special case of the Abelian vortex equations, and it is shown that the property that a metric solve these equations is preserved by the Ricci flow. The equations are solved explicitly, and among the metrics obtained are all steady gradient Ricci solitons (e.g. the the cigar soliton) and the sausage metric; there are found other examples of eternal, ancient, and immortal Ricci flows, as well as some Ricci flows with conical singularities.
\end{abstract}
\maketitle

\section{Introduction}
Given a surface $M$, consider, for a pair $(h, Y)$ comprising a Riemannian metric $h$ with scalar curvature $\sR_{h}$ and a vector field $Y$ on $M$, and a fixed parameter $\varep = \pm 1$, the \textit{real vortex} equations:
\begin{align}\label{vlike}
&0 = d(\sR_{h} + 4\varep|Y|_{h}^{2}), & &\lie_{Y}h = 0.
\end{align}
The second condition of \eqref{vlike} says simply that $Y$ is a Killing field, while the first equation says that there is a constant $\tau$ such that
\begin{align}\label{realvortex}
\tau = \sR_{h} + 4\varep|Y|_{h}^{2}.
\end{align}
This parameter $\tau$ will be called the \textit{vortex} parameter of the solution $(h, Y)$. (The factor $4$ in \eqref{vlike} has no intrinsic significance, as it could be absorbed into $Y$, and has been chosen for consistency with the conventions of \cite{Fox-2dahs}). A solution $(h, Y)$ of \eqref{vlike} will be said to be \textit{trivial} if $Y$ is identically zero. In this case \eqref{vlike} forces $h$ to have constant curvature. (The example of a parallel vector field on a flat torus shows that the constancy of the curvature of a solution to \eqref{vlike} need not imply the triviality of the solution).

In section \ref{weylsection}, it is explained how to construct from a solution of \eqref{vlike} with $\varep = -1$ a solution of the Einstein Weyl equations as formulated for surfaces by D. Calderbank in \cite{Calderbank-mobius, Calderbank-twod}. In section \ref{vortexsection}, it is explained that a solution of \eqref{vlike} with $\varep = 1$ gives rise to a solution of the usual Abelian vortex equations on the bundle of holomorphic one forms, and can be seen as the real part of such a vortex solution. Because of these observations, a solution $(h, Y)$ to the equations \eqref{vlike} will be called \textit{Einstein Weyl} or \textit{vortex-like} as $\varep = -1$ or $\varep = 1$. As is also explained in section \ref{vortexsection} the Einstein Weyl case admits a complex reformulation in which it resembles the Abelian vortex equations but with a sign change on one term (see \eqref{signedvortexeqns}). Such \textit{signed vortex equations} and their relation with Einstein Weyl structures were discussed in section $8$ of \cite{Fox-2dahs}. Independently A.~D. Popov proposed such equations in \cite{Popov-vortextypeequations} and they have been called the \textit{Popov equations} by N. Manton in \cite{Manton-popovequations}; see section \ref{vortexsection} for related remarks. They make sense for sections of line bundles other than the tangent bundle; the corresponding real equations are like \eqref{vlike}, though with a completely symmetric tensor in place of $Y$ and an appropriate generalization of the Killing condition. While in both the Einstein Weyl and the vortex-like case these equations are most interesting when posed for sections of line bundles other than the tangent bundle, the particular case \eqref{vlike} considered here is interesting because it can be solved explicitly in quite elementary terms, and the solutions come in one parameter families solving the Ricci flow. A special case of this last statement, explained in section \ref{solsection} is that any steady gradient Ricci soliton determines a solution to the $\varep = 1$ case of \eqref{vlike}. Lemma \ref{ricsollemma}, shows conversely that solutions of \eqref{vlike} for which there vanishes the invariant defined in \eqref{spdefined} are steady gradient Ricci solitons. This suggests a relation between the real vortex equations and the Ricci flow. In particular, it suggests the real vortex equations can be viewed as the fixed points of some natural flow on the moduli space of pairs $(h, Y)$. It would be interesting to give substance to such a speculation.

When the Ricci flow $\hh$ beginning at the metric $h$ of a solution $(h, Y)$ to the real vortex equations is unique, then the Killing property of $Y$ is preserved along the flow. While it is not obvious that the condition \eqref{realvortex} is also preserved by the Ricci flow, Theorem \ref{flowtheorem} shows that locally there is a unique Ricci flow such that this is true for some $\tau$ depending on the flow parameter $t$, and Corollary \ref{flowcorollary} shows that it is true globally on a compact surface. 
Thus the Ricci flow can somehow be regarded as a flow obtained by varying the vortex parameter, although a precise formulation of this statement and a conceptual explanation for it are not given here. The results lend credence to the idea that the Ricci flow is related to some natural flow obtained from the moduli space of solutions to some vortex-like equations by varying the vortex parameter. In \cite{Manton-vortexricci}, N. Manton has given indications of a possible relation obtained by considering the metric on the moduli space of one vortices on a compact Riemann surface as a function of the vortex parameter and the Ricci flow on that surface. Although no direct connection between Manton's ideas and those explained here is yet apparent, they have a similar spirit.  

That a pair $(\hh, Y)$ comprising a Ricci flow $\hh$ and an $\hh$-Killing field $Y$ solves \eqref{vlike} can be reduced to the pair of equations \eqref{wss} and \eqref{lde2}, and these can be integrated explicitly. Section \ref{metricssection} is devoted to describing in detail the Ricci flows that result. They include a number of well known examples. In particular they include steady gradient Ricci solitons, such as the cigar soliton; the Fateev-Onofri-Zamolodchikov-King-Rosenau sausage metrics on the sphere; and a pair of solutions to the Ricci flow, one on the sphere and one on the torus, that were found in \cite{Fox-2dahs} in connection with Einstein-Weyl structures. Several of the metrics in section \ref{metricssection} have been considered previously by I. Bakas in \cite{Bakas} and can be found by using the ansatz used in \cite{Fateev-Onofri-Zamolodchikov} to find the sausage metric. Among the metrics constructed there are immortal, ancient, and eternal Ricci flows, and there are also constructed several examples having conical singularities at the zeros of $Y$. While the examples that were not already well known mostly have some undesirable properties, e.g. curvature blowup, it is interesting that among the fairly limited class of metrics solving the real vortex equations appear many of the most interesting Ricci flows on surfaces. 

\section{Real vortex equations}
\subsection{}\label{weylsection}
A pair $(\nabla, [h])$ comprising a torsion-free affine connection $\nabla$ and a conformal structure $[h]$ is a \textit{Weyl structure} if for each $h \in [h]$ there is a one-form $\ga_{i}$ such that $\nabla_{i} h_{jk} = 2\ga_{i}h_{jk}$.  Here, and where convenient in all that follows, the abstract index conventions are used; in particular grouping of indices between parentheses (resp. square brackets) indicates complete symmetrization (resp. anti-symmetrization) over the enclosed indices. The one-form $\ga$ is the \textit{Faraday primitive} associated to $h \in [h]$, and $F = -d\ga$ is the \textit{Faraday curvature}. Since when $h$ is rescaled conformally $\ga$ changes by addition of an exact one-form, $F$ depends only on the pair $(\nabla, [h])$ and not on the choice of $h \in [h]$. Note that $\ga$ depends only on the homothety class of $h$ and not on $h$ itself. An $h \in [h]$ for which the associated Faraday primitive $\ga$ is coclosed is called a \textit{distinguished} representative of $[h]$. From the Hodge decomposition it follows that if the underlying manifold is compact there is a distinguished representative determined uniquely up to homothety.

In dimensions greater than two a Weyl structure is said to be \textit{Einstein} if the trace-free symmetric part of the Ricci tensor of $\nabla$ vanishes. However, since on a surface such a condition is automatic, the situation for surfaces is similar to that for ordinary metrics on surfaces, where the correct analogue of the Einstein condition is constant scalar curvature. In \cite{Calderbank-mobius} and \cite{Calderbank-twod}, D. Calderbank defined a Weyl structure $(\nabla, [h])$ on a surface to be \textit{Einstein} if it satisfies the equation
\begin{align}\label{cew}
\begin{split}
0 &= \nabla_{i}(|\det h|^{1/2}(\sR_{h} - 2\dad_{h}\ga)) + 2|\det h|^{1/2}h^{pq}\nabla_{p}F_{iq} \\
& = |\det h|^{1/2}\left(d_{i}(\sR_{h} - 2\dad_{h}\ga) + 2\ga_{i}(\sR_{h} - 2\dad_{h}\ga) + 2h^{pq}\nabla_{p}F_{iq} \ \right)
\end{split}
\end{align}
where $h$ is any representative of $h$, $h^{ij}$ is the symmetric bivector inverse to $h_{ij}$, and $\dad_{h}$ is the adjoint of the exterior differential corresponding to $h$. The equation \eqref{cew} is conformally invariant, for if $\tilde{h} = fh$ then $f(\sR_{h} - 2\dad_{\tilde{h}}\tilde{\ga}) = \sR_{h} - 2\dad_{h}\ga$, where $\tilde{\ga}$ is the Faraday primitive associated to $\tilde{h}$. The quantity $\sR_{h} - 2\dad_{\tilde{h}}\tilde{\ga}$ arises as the $h$-trace of the Ricci curvature of $\nabla$. By the following theorem, on a compact surface, Calderbank's Einstein Weyl condition is equivalent to the $\varep = -1$ case of the real vortex equations \eqref{vlike}. Calderbank's original definition is Definition $3.2$ of \cite{Calderbank-mobius}; see also Corollary $3.4$ of that same paper, the conclusion of which was taken as the definition in Definition $6.2$ of \cite{Fox-2dahs}. For the proof of Theorem \ref{ewtheorem} see Theorem $3.7$ of \cite{Calderbank-mobius} or Theorem $7.1$ of \cite{Fox-2dahs}.

\begin{theorem}[\cite{Calderbank-mobius}]\label{ewtheorem}
A Weyl structure $(\nabla, [h])$ on a compact surface is Einstein if and only if for any distinguished metric $h \in [h]$ with associated Faraday primitive $\ga$, the vector field $Y^{i} = h^{ip}\ga_{p}$ metrically dual to $\ga$ is $h$-Killing and with $h$ constitutes a solution to the real vortex equations \eqref{vlike} in the case $\varep = -1$, that is $d(\sR_{h} - 4|Y|_{h}^{2}) = 0$.
\end{theorem}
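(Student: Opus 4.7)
The plan is to fix the distinguished gauge $h \in [h]$, reduce the Einstein Weyl equation \eqref{cew} to an equation involving only the Levi-Civita connection $D$ of $h$, and then extract both the Killing property of $Y = \gamma^{\sharp}$ and the vortex equation, with the Killing property emerging from an integration-by-parts argument that genuinely uses the compactness of $M$.

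First, by Hodge theory a compact surface always admits a distinguished representative $h \in [h]$, unique up to homothety, so I fix such an $h$. In this gauge $\dad_{h}\gamma = 0$, and \eqref{cew} simplifies to $d_{i}\sR_{h} + 2\gamma_{i}\sR_{h} + 2h^{pq}\nabla_{p}F_{iq} = 0$. Next I express the Weyl connection as $\nabla = D + T$, where $T^{k}{}_{ij} = -\gamma_{i}\delta^{k}_{j} - \gamma_{j}\delta^{k}_{i} + h_{ij}\gamma^{k}$ is the unique tensor encoding $\nabla_{i}h_{jk} = 2\gamma_{i}h_{jk}$; a direct computation then gives $h^{pq}\nabla_{p}F_{iq} = D^{j}F_{ij} + 2\gamma^{j}F_{ij}$. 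Now decompose $D_{i}\gamma_{j} = S_{ij} - \tfrac{1}{2}F_{ij}$, where $S_{ij} = D_{(i}\gamma_{j)}$ is symmetric and trace-free (its trace vanishes by the distinguished condition), and use the two-dimensional identity $\operatorname{Ric}(D)_{ij} = \tfrac{1}{2}\sR_{h}h_{ij}$. The Ricci identity then yields the pair of formulas $D^{j}F_{ij} = \Delta\gamma_{i} - \tfrac{1}{2}\sR_{h}\gamma_{i}$ and $2D^{j}S_{ji} = D^{j}F_{ij} + \sR_{h}\gamma_{i}$. After substitution the $\sR_{h}\gamma_{i}$ contributions telescope, and the Einstein Weyl equation takes the clean form
\[
d_{i}\sR_{h} + 4D^{j}S_{ji} + 4\gamma^{j}F_{ij} = 0.
\]

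Now contract this displayed equation with $Y^{i} = \gamma^{i}$. The last term vanishes pointwise by antisymmetry of $F$. Integrating over $M$, the first term disappears because $D_{i}\gamma^{i} = -\dad_{h}\gamma = 0$, while integration by parts converts the middle term into $-4\int_{M}|S|^{2}_{h}\,d\vol_{h}$, since the antisymmetric part of $D\gamma$ is annihilated by the symmetric $S$ and the distinguished condition kills the boundary contribution from $D_i \gamma^i$. Hence $S \equiv 0$ pointwise, so $Y$ is Killing. With $S = 0$ one has $F_{ij} = -2D_{i}\gamma_{j}$, so $4\gamma^{j}F_{ij} = -4D_{i}|\gamma|^{2}_{h}$, and the displayed identity reduces to $d(\sR_{h} - 4|Y|^{2}_{h}) = 0$, the real vortex equation for $\varep = -1$. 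The converse direction is immediate: given Killing $Y$ and the vortex equation, all the identities above hold, so the displayed equation is satisfied, and reversing the derivation recovers \eqref{cew} in the distinguished gauge; conformal invariance of \eqref{cew} extends the conclusion to every representative of $[h]$.

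The main obstacle will be the integration step, which genuinely requires compactness: the displayed identity is a local consequence of Einstein Weyl, but locally it does not force $S = 0$. Some care is also needed to check that all of the two-dimensional curvature and commutator identities combine to produce that clean equation, in particular that the $\sR_{h}\gamma_{i}$ contributions coming from $2\gamma_{i}\sR_{h}$, from $\Delta\gamma_{i}$ via the Weitzenb\"ock formula, and from the Ricci identity applied to $D^{j}F_{ij}$ and $2D^{j}S_{ji}$, all cancel to leave exactly the three terms $d\sR_{h}$, $4D^{j}S_{ji}$, and $4\gamma^{j}F_{ij}$.
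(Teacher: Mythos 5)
The paper does not prove this theorem; it attributes it to Calderbank and refers the reader to Theorem 3.7 of \cite{Calderbank-mobius} and Theorem 7.1 of \cite{Fox-2dahs}. Your argument is correct and is essentially the standard one used in those references: fix the Gauduchon (distinguished) gauge, reduce \eqref{cew} to $d_{i}\sR_{h} + 4D^{j}S_{ji} + 4\ga^{j}F_{ij} = 0$ with $S = D_{(i}\ga_{j)}$, and extract $S \equiv 0$ by pairing with $\ga$ and integrating over the compact surface. I checked the individual identities: the expression of $h^{pq}\nabla_{p}F_{iq}$ in terms of $D$ (the trace $h^{pq}T^{l}{}_{pq}$ of the difference tensor vanishes precisely in dimension two), the cancellation of the $\sR_{h}\ga_{i}$ terms via the two-dimensional Ricci identity, and the final step $4\ga^{j}F_{ij} = -4D_{i}|\ga|^{2}_{h}$ once $S=0$ (which agrees with the paper's \eqref{dyh}); all are consistent. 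Two small wording points: in the integration-by-parts step there is of course no boundary term on a closed surface, and the statement that "the distinguished condition kills the boundary contribution" is a red herring --- the coclosedness of $\ga$ is used only to kill $\int_{M}\ga^{i}d_{i}\sR_{h}$ and to make $S$ trace-free, while $S_{ji}D^{j}\ga^{i} = |S|^{2}_{h}$ holds simply because $S$ is symmetric and $F$ antisymmetric. Your closing remark is also the right one: compactness is genuinely needed, since the displayed identity alone does not force $S=0$ locally, and indeed the noncompact case is where non-Killing examples live.
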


\noindent
Here it is convenient to take the $\varep = -1$ case of \eqref{vlike} as the definition of an Einstein Weyl structure. Given a solution $(h, Y)$ of \eqref{vlike} with $\varep = -1$, the Weyl connection of the associated Einstein Weyl structure is $\nabla = D - 2\ga_{(i}\delta_{j)}\,^{k} + h_{ij}Y^{k}$, where $\ga_{i} = Y^{p}h_{ip}$ and $D$ is the Levi-Civita connection of $h$.

\subsection{}
\label{vortexsection}
A \textit{Riemann surface} means a one-dimensional complex manifold. It is equivalent to specify a conformal structure $[h]$ and an orientation, in which case the complex structure $J$ is the unique one compatible with $[h]$ and the given orientation. On a Riemann surface a real vector field $Y$ is conformal Killing if and only if its $(1, 0)$ part $Y^{(1,0)}$ is a holomorphic vector field. In particular, the only orientable compact surfaces possibly supporting nontrivial solutions to \eqref{vlike} are the sphere and torus. In the Einstein Weyl case, all such structures have been described in various forms in \cite{Calderbank-mobius}, \cite{Calderbank-twod}, and section $10$ of \cite{Fox-2dahs}. 

On a Riemann surface with complex structure $J$ let $h$ be a Riemannian metric representing the conformal structure and having K\"ahler form $\om$, and let $\lne$ be a smooth complex line bundle over $M$ with a fixed Hermitian metric $m$. The \textit{Abelian vortex} or \textit{Abelian Higgs} equations on a compact Riemann surface are a modification of the Ginzburg-Landau model for superconductors first studied by M. Noguchi, \cite{Noguchi}, and S. Bradlow, \cite{Bradlow}, (see \cite{Jaffe-Taubes} for background and context).
The Abelian vortex equations with parameter $\tau$ are the following equations for a pair $(\nabla, s)$ comprising a Hermitian connection $\nabla$ on $\lne$ and a smooth section $s$ of $\lne$.
\begin{align}\label{vortexeqns}
&\delbar_{\nabla}s = 0,& & 2 \j \dlef(\Om) + |s|_{m}^{2} = \tau.
\end{align}
Here $\Om$ is the curvature of $\nabla$, viewed as a real-valued two-form on $M$; $\delbar_{\nabla}$ is the $(0,1)$ part of $\nabla$; and $\dlef$ is the dual Lefschetz operator on $(1,1)$ forms, normalized so that $\dlef(\om) = 1$. The first equation says that $\delbar_{\nabla}$ is a holomorphic structure on $\lne$ with respect to which $s$ is a holomorphic section, while the second equation is something like an Einstein equation. (Note that there is no need to include the condition $\Om^{(0,2)} = 0$, as it is automatic on a Riemann surface). A solution of \eqref{vortexeqns} is \textit{nontrivial} if $s$ is not identically zero. The trivial solutions correspond to holomorphic structures on $\lne$; a precise statement is Theorem $4.7$ of \cite{Bradlow}.

The modification of \eqref{vortexeqns} to be considered here consists in the equations
\begin{align}\label{signedvortexeqns}
&\delbar_{\nabla}s = 0,& & 2 \j \dlef(\Om) + \varep |s|_{m}^{2} = \tau,
\end{align}
in which all the data is as in \eqref{vortexeqns}, and $\varep$ is one of $\pm 1$. The $\varep = +1$ case simply yields \eqref{vortexeqns}. The $\varep = -1$ case will be called here the \textit{signed Abelian vortex equations}. 

In \cite{Garcia-Prada-invariantconnections}, O. Garcia-Prada generalized arguments of C. Taubes, \cite{Taubes-equivalence}, and E. Witten, \cite{Witten-multipseudoparticle}, to show that the Abelian vortex equations for a line bundle on a surface $M$ are obtained from the Hermitian Yang-Mills equations on a rank two holomorphic vector bundle over the product of $M$ with $\proj^{1}(\com)$ via dimensional reduction utilizing the $SU(2)$ invariance of the K\"ahler metric on $M\times \proj^{1}(\com)$. In \cite{Popov-vortextypeequations}, A.~D. Popov obtains the signed vortex equations on $S^{2}$ by an analogous dimensional reduction of the Yang-Mills equations on the product of $S^{2}$ with hyperbolic space exploiting the $SU(1, 1)$ invariance of the metric on the product. These results suggest that the equations \eqref{signedvortexeqns} are equally natural with either sign.

Solutions to the Abelian vortex equations are considered equivalent if they are related by the action of the unitary gauge group ($S^{1}$-valued smooth functions) on the space of pairs $(\nabla, s)$ comprising a Hermitian connection and a smooth section of $\lne$, and the moduli space of solutions means the quotient of the space of pairs solving \eqref{vortexeqns} by the action of the unitary gauge group. 

The basic theorem about the Abelian vortex equations on a surface is the following.
\begin{theorem}[\cite{Noguchi}, \cite{Bradlow}, \cite{Garcia-Prada}]\label{vortextheorem}
Let $M$ be a compact surface equipped with a K\"ahler metric $(h, J)$. Let $\lne$ be a smooth complex line bundle with a fixed Hermitian metric $m$. Let $D$ be an effective divisor of degree equal to $\deg(\lne)$. There exists a nontrivial solution $(s, \nabla)$ of the vortex equations \eqref{vortexeqns}, unique up to unitary gauge equivalence, if and only if $4\pi\deg(\lne) < \tau \vol_{h}(M)$. Moreover the holomorphic line bundle and section canonically associated to $D$ are $(\lne, \delbar_{\nabla})$ and $s$.
\end{theorem}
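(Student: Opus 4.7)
The plan is to follow the classical Noguchi--Bradlow--Garcia-Prada strategy: reduce the coupled system \eqref{vortexeqns} to a single scalar semilinear elliptic equation of Kazdan--Warner type for a conformal factor on a fixed smooth line bundle, and then solve that PDE.

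\emph{Necessity.} Integrating the second equation of \eqref{vortexeqns} over $M$ and using that $\dlef(\Om)\vol_{h}=\Om$ on a Riemann surface, together with the Chern--Weil identity $\int_{M}\tfrac{\j}{2\pi}\Om=\deg(\lne)$, yields
\begin{equation*}
4\pi\deg(\lne)+\|s\|_{L^{2}(m)}^{2}=\tau\vol_{h}(M),
\end{equation*}
whence the strict inequality $4\pi\deg(\lne)<\tau\vol_{h}(M)$ holds whenever $s\not\equiv 0$.

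\emph{Reduction to a scalar PDE.} To the effective divisor $D$ is associated canonically a pair $(L_{D},s_{D})$ consisting of a holomorphic line bundle with $c_{1}(L_{D})=[D]$ and a holomorphic section vanishing exactly along $D$ with the prescribed multiplicities. Since $\deg(L_{D})=\deg(\lne)$, I fix a smooth bundle isomorphism $\lne\cong L_{D}$ (transporting the holomorphic structure $\delbar$ to $\lne$) and a reference Hermitian metric $m_{0}$ on $\lne$ with Chern connection $\nabla_{0}$ and curvature $\Om_{0}$. I seek a solution in the form $s=s_{D}$, $\nabla=\nabla_{u}$, where $\nabla_{u}$ is the Chern connection of the conformally rescaled Hermitian metric $m=e^{-2u}m_{0}$ for an unknown real $u\in C^{\infty}(M)$. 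The holomorphic condition $\delbar_{\nabla}s=0$ is then automatic since the Chern connection preserves the underlying holomorphic structure. The transformation law $\Om_{u}=\Om_{0}+2\del\delbar u$ together with the K\"ahler identity $2\j\dlef(\del\delbar u)=-\Delta_{h}u$ (in the convention $\Delta_{h}\ge 0$ at interior maxima) reduces the second equation of \eqref{vortexeqns} to
\begin{equation*}
-2\Delta_{h}u+e^{-2u}|s_{D}|_{m_{0}}^{2}=\tau-2\j\dlef(\Om_{0}),
\end{equation*}
a semilinear elliptic equation for $u$ with strictly monotone exponential nonlinearity. Integrating over $M$ recovers the solvability condition $\int_{M}e^{-2u}|s_{D}|_{m_{0}}^{2}\vol_{h}=\tau\vol_{h}(M)-4\pi\deg(\lne)$, which is positive precisely under the hypothesis.

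\emph{Existence, uniqueness, and gauge equivalence.} Uniqueness of $u$ follows from the strong maximum principle applied to the difference $u_{1}-u_{2}$ of two solutions: at an interior maximum one has $\Delta_{h}(u_{1}-u_{2})\ge 0$, while subtracting the two PDEs forces $(e^{-2u_{1}}-e^{-2u_{2}})|s_{D}|_{m_{0}}^{2}=2\Delta_{h}(u_{1}-u_{2})\ge 0$, so $u_{1}\le u_{2}$ at the maximum on the open dense set where $s_{D}\ne 0$, giving $u_{1}\le u_{2}$ everywhere by continuity; symmetry concludes $u_{1}=u_{2}$. Given solutions of the PDE, the associated $(\nabla_{u},s_{D})$ satisfy \eqref{vortexeqns} by construction, and the holomorphic identification $(\lne,\delbar_{\nabla})\cong(L_{D},\delbar)$ together with $s=s_{D}$ holds tautologically. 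For gauge equivalence of two a priori given vortex solutions with the same divisor $D$, one uses that both holomorphic pairs $(\lne,\delbar_{\nabla_{i}},s_{i})$ are biholomorphic to $(L_{D},s_{D})$; a choice of biholomorphism is realized, on the unitary frame bundle, by a unitary gauge transformation, after which uniqueness of $u$ forces the two Hermitian data to agree.

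\emph{Main obstacle.} The core technical obstacle is the existence step for the scalar PDE. In \cite{Noguchi} and \cite{Bradlow} it is established either by the method of sub- and super-solutions or by minimizing a strictly convex functional on a suitable Sobolev completion; the strict inequality $4\pi\deg(\lne)<\tau\vol_{h}(M)$ is sharp and is consumed precisely in the construction of a supersolution, the threshold value being where the nonlinear estimates degenerate and only trivial solutions survive.
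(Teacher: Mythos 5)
The paper does not prove Theorem \ref{vortextheorem}: it is stated as background with the proof deferred entirely to \cite{Noguchi}, \cite{Bradlow}, and \cite{Garcia-Prada}, so there is no internal argument to compare against. Your outline follows exactly the strategy of those references --- integrate the curvature equation against the volume form and apply Chern--Weil to obtain the necessity of $4\pi\deg(\lne) < \tau\vol_{h}(M)$, then pass via the complex gauge group from the fixed-Hermitian-metric formulation to the fixed-holomorphic-data formulation and reduce the system to a single Kazdan--Warner-type scalar equation for the conformal factor $u$ of the Hermitian metric. The reduction, the integral identity it produces, and the dictionary between effective divisors and holomorphic pairs $(L_{D}, s_{D})$ are all set up correctly and are consistent with the discussion of Bradlow's Proposition $3.7$ in section \ref{vortexsection}.

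Two gaps remain. The substantive one is that you never solve the scalar equation: the existence of $u$ under the strict inequality $4\pi\deg(\lne)<\tau\vol_{h}(M)$ is the entire analytic content of the theorem, and your final paragraph names it as the ``main obstacle'' and points back to the references instead of exhibiting a sub/supersolution pair or a coercivity estimate for the variational functional. As a self-contained proof the proposal is therefore incomplete, though as a faithful reduction to the cited literature it is accurate. The lesser gap is in uniqueness: the pointwise maximum principle applied to $v=u_{1}-u_{2}$ yields information only if the maximum of $v$ is attained where $s_{D}\neq 0$, and ``by continuity'' does not repair the case where the maximum sits on the divisor (an open dense set need not contain the maximum point, nor need the supremum over it be attained). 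The standard fix is to write $e^{-2u_{1}}-e^{-2u_{2}}=-2e^{-2\xi}v$ by the mean value theorem, so that $v$ solves a linear equation $\Delta_{h}v+cv=0$ with $c\geq 0$ vanishing only on the divisor, and then either pair against $v$ and integrate to force $\nabla v\equiv 0$ and $cv^{2}\equiv 0$, or apply the strong maximum principle on the open set $\{v>0\}$.
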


The number $\deg \lne$ is called the \textit{vortex number} because the section $s$ has $\deg \lne$ zeros (counted with multiplicity), which are regarded as \textit{vortices}. For the same reason, if $\deg \lne = N$, a solution $(h, s)$ is referred to as an $N$-vortex solution, with or without multiplicity as the zeros of $s$ are not or are distinct. The space of effective divisors on $M$ of a given degree $r$ is the symmetric product $S^{r}(M)$ of $M$ and Theorem \ref{vortextheorem} shows that $S^{\deg(\lne)}(M)$ is in bijection with the moduli space of unitary gauge equivalence classes of vortex solutions on $\lne$. It is shown in \cite{Garcia-Prada} by symplectic reduction, that this moduli space carries a K\"ahler structure. 

As is explained in \cite{Bradlow}, a problem equivalent to solving \eqref{vortexeqns} consists in finding the Hermitian metric $m$ for which \eqref{vortexeqns} hold, given a holomorphic line bundle $\lne$ on a K\"ahler surface and a prescribed section $s$ of $\lne$. From this point of view the complex gauge group of $\lne$ (comprising smooth functions $g: M \to \com^{\ast}$) acts by pushforward on holomorphic structures and holomorphic sections, and on Hermitian metrics by multiplication by a factor $|g|^{2}$. Proposition $3.7$ of \cite{Bradlow} shows that given a complex line bundle $\lne \to M$ over a K\"ahler manifold $M$, the moduli space of unitary gauge equivalence classes of pairs $(\nabla, s)$ solving \eqref{vortexeqns} with respect to a fixed Hermitian metric $m$ on $\lne$ is in bijection with the quotient modulo the action of the group of complex gauge transformations of the space of triples $(\delbar_{\lne}, s, k)$ comprising a holomorphic structure $\delbar_{\lne}$ on $\lne$, a holomorphic section $s$ of $\lne$, and a Hermitian metric $k$ on $\lne$ solving \eqref{vortexeqns}. The image of the action of $g:M \to \com^{\ast}$ on $(\delbar_{\lne}, s, k)$ is $(g^{-1}\circ \delbar_{\lne}\circ g, g^{-1}s, |g|^{2}k)$. The bijection assigns to the equivalence class $[\nabla, s]$ the equivalence class $[\nabla, s, m]$. As will be explained in section \ref{equivalencesection}, this way of viewing \eqref{vortexeqns} is the most relevant for the relation with the real equations \eqref{vlike}.

On a Riemann surface, a $q$-differential is a smooth section of the $q$th power $\cano^{q}$ of the complex cotangent bundle. The real part of a $q$-differential $s$ is a trace-free symmetric $q$-tensor $X$, covariant or contravariant according to whether $q$ is positive or negative, and so $s$ can be written as the $(|q|, 0)$ part of the real tensor $X$, $s = X^{(|q|, 0)}$. That $s$ be holomorphic is equivalent to $X$ being a Codazzi tensor, for $q > 1$; to $X$ being harmonic, for $q = 1$; and to $X$ being a conformal Killing tensor, for $q$ negative (see Lemma $3.5$ of \cite{Fox-2dahs}).

Let $\lne = \cano^{q}$ for some $q \in \integer$. In the equations \eqref{signedvortexeqns} there is no \textit{a priori} relation between the K\"ahler structure $(h, J)$ on $M$ and the Hermitian metric $m$ and holomorphic structure on $\lne$. However, since $\lne$ is a power of the complex tangent bundle, it makes sense to speak of the Hermitian metric induced on $\lne$ by $h$, and so it makes sense to consider solutions in which $m$ is this induced Hermitian metric and the holomorphic structure on $\lne = \cano^{q}$ is the standard one. In what follows this will be case of primary interest. In this case a solution $s$ of \eqref{signedvortexeqns} is a holomorphic $q$-differential and $\nabla$ is the connection induced on $\lne$ by the Levi-Civita connection $D$ of $h$; the corresponding divisor is canonical, so the solution of \eqref{signedvortexeqns} will be said to be \textit{canonical} as well. Precisely, a \textit{canonical solution} of the (signed) vortex equations comprises a Riemann surface $(M, J)$, a holomorphic $q$-differential $s$, and a metric $h$ representing the given conformal structure such that 
\begin{align}\label{sve}
2 \j \dlef(\Om) + \varep |s|_{h}^{2} = \tau,
\end{align}
for some constant $\tau$. Here $\Om  = (\j q \sR_{h}/2)\om_{h}$ is the curvature of the Hermitian connection induced on $\cano^{q}$ by the Levi-Civita connection $D$ of $h$ and $\dlef$ is defined in terms of the K\"ahler structure $(h, J)$ so that $\dlef(\Omega) = (\j q/2)\sR_{h}$. Write $s = 2^{1 - q/2}|q|^{1/2}X^{(|q|, 0)}$. Then, since $2|X^{(|q|, 0)}|_{h}^{2} = |X|_{h}^{2}$,
\begin{align}\label{rsve}
2\j \dlef(\Omega) + \varep |s|_{h}^{2} = -q \left(\sR_{h} - \varep \sign(q) 2^{1-q} |X|^{2}_{h}\right),
\end{align}
so that $s$ solves \eqref{sve} if and only if $\sR_{h} - \varep \sign(q)2^{1-q}|X|_{h}^{2}$ is constant. Thus a canonical solution of the (signed) vortex equations is equivalent to a pair comprising a Riemannian metric $h$ and a trace-free Codazzi or conformal Killing tensor $X$ satisfying
\begin{align}\label{signedew}
d(\sR_{h} - \varep \sign(q)2^{1-q}|X|_{h}^{2}) = 0.
\end{align}
If the Gauss-Bonnet theorem is valid, e.g. if $M$ has finite topological type, finite volume, and integrable curvature, and there is a solution to \eqref{signedew}, there must hold
\begin{align}
4\pi \chi(M) = \varep \sign(q)2^{1-q}||X||_{h}^{2} + \tau \vol_{h}(M),
\end{align}
where $\tau$ is the constant value of $\sR_{h} - \varep \sign(q)2^{1-q}|X|_{h}^{2}$. This shows that the condition 
\begin{align}
&\varep\sign(q)\left(\tau - \tfrac{4\pi \chi(M)}{\vol_{h}(M)}\right) \leq 0,%\\
\end{align}
on $\tau$ is necessary for the existence of solutions.

%save: this aside is omitted because it is not used
%The case of the signed abelian vortex equations on $M = \proj^{1}(\com)$ is somewhat special because a holomorphic line bundle on $\proj^{1}(\com)$ is determined up to isomorphism by its degree. Since $\cano^{q}$ has degree $-2q$, in the case of $\lne$ of even degree there is no loss of generality in assuming from the beginning that $\lne$ is a power of the complex tangent bundle. In this case the moduli space of $q$-vortex solutions is the $q$-fold symmetric product of $\proj^{1}(\com)$, which is isomorphic to $\proj^{q}(\com)$ via the map associating to a $q$-tuple of points in $\proj^{1}(\com)$ the coefficients of the degree $q$ polynomial vanishing at these points.

Note that by themselves the equations \eqref{signedew} for $(q, \ep)$ and $(-q, -\ep)$ are the same up to a power of $2$ that can be absorbed into the section $X$; what changes with the change in parameters is the condition (Codazzi or conformal Killing) imposed on the section $X$. For example, in the $q = \pm 1$ cases, it is different to demand that a vector field be the real part of a holomorphic vector field and that its dual one-form be the real part of a holomorphic differential; the former imposes that the vector field be conformal Killing while the latter imposes that the dual one-form be harmonic.

The case most important here is $q = -1$, in which case $s$ is a holomorphic vector field and so $X$ is a conformal Killing field. In this case solutions of the real vortex equations \eqref{realvortex} give rise to solutions of the signed Abelian vortex equations. However, not even all canonical solutions of the signed Abelian vortex equations arise in this way because it is not the case that every holomorphic vector field is the $(1,0)$ part of a real Killing field. The real version \eqref{signedew} of \eqref{signedvortexeqns} is then \eqref{realvortex}, but with $Y$ only required to be conformal Killing. %was ``real version of'' instead of ``real version''
 
In general, if a symmetric trace-free $|q|$-tensor $X$ is given such that $X^{(|q|, 0)}$ is holomorphic, then the equation \eqref{signedew} can be solved as follows. Let $\tilde{h}$ be the unique metric conformal to $h$ with scalar curvature contained in $\{0, 2, -2\}$ and write $h = e^{u}\tilde{h}$. The equation \eqref{signedew} becomes
\begin{align}\label{signedew2}
\lap_{\tilde{h}}u - \sR_{\tilde{h}} + \tau e^{u} + \varep\sign(q)2^{1-q}e^{(1-q)u}|X|_{\tilde{h}}^{2} = 0.
\end{align}
The solvability or no of \eqref{signedew2} on a compact surface of genus at least two is usually ascertainable on general grounds, while on spheres, tori, and some noncompact surfaces it is less straightforward. Particularly for the sphere and torus it can be more convenient to use as the background metric the singular flat metric $\sth = |X|^{2/q}h$ instead of $\tilde{h}$. For example, on a surface of genus at least two, the case $\varep = -1$, $q \geq 1$ of \eqref{signedew2} can always be solved (see Corollary $9.1$ of \cite{Fox-2dahs}) provided $\tau$ is negative. For another example, in the $\varep = -1$ and $q = -1$ case $X$ is a Killing field, and using this fact the analogue of \eqref{signedew2} with $\sth$ in place of $\tilde{h}$ reduces to an ordinary differential equation which can be solved straightforwardly. Although cast in different terms, this is essentially the approach taken in section \ref{metricssection}.

The equation \eqref{signedew2} is slightly more complicated than the similar equation arising for the vortex equations, e.g. equation $4.1$ of \cite{Bradlow}. The difference is the requirement that the metric on $\lne$ be that induced by the metric on the underlying surface. This condition introduces an extra term in \eqref{signedew2}, with the consequence that solvability of \eqref{signedew2} does not reduce directly to the well known results of Kazdan-Warner, \cite{Kazdan-Warner}. Other cases of \eqref{signedew2} have been studied previously. For example, the case with $\varep = 1$, $q = 2$, and $\tau < 0$ arises as the Gauss equation for a minimal surface in a hyperbolic three manifold; see Theorem $4.2$ of \cite{Uhlenbeck-minimal}, in which this equation plays an important role.

\subsection{}\label{equivalencesection}
The standard notions of isomorphism of Einstein Weyl structures and gauge equivalence of solutions of the abelian vortex equations do not lead to the same notions of equivalence of solutions to the real vortex equations \eqref{vlike}, so some discussion of such notions is necessary. Clearly two solutions $(h, Y)$ and $(\bar{h}, \bar{Y})$ of \eqref{vlike} related by a diffeomorphim should be considered isomorphic. In general the canonical triples $(\delbar_{\nabla}, s, h)$ representing the canonical solutions of the signed Abelian vortex equations corresponding to a pair $(h, Y)$ and its image under a biholomorphism of the complex structure determined by $h$ and the orientation of the underlying surface need not be equivalent modulo a complex gauge transformation; that is, they might determine different points in the vortex moduli space. Usually solutions of the Abelian vortex equations related by a biholomorphisms are \textit{not} identified, because the zeros of $s$ are regarded as vortices, and their absolute positions are considered meaningful. On the other hand, the notion of equivalence relevant for the Abelian vortex equations is complex gauge equivalence. How this translates for pairs $(h, Y)$ solving \eqref{vlike} is described now. The solution $(\delbar, s, h)$ of \eqref{signedvortexeqns} corresponding to $(h, Y)$ is canonical. That a solution of \eqref{signedvortexeqns} be canonical is not preserved by the action of the complex gauge group. The statement that the image $g\cdot(\delbar, s, h) = (g^{-1}\circ \delbar \circ g , g^{-1}s, |g|^{2}h)$ of $(\delbar, s, h)$ under the action of a complex gauge transformation $g:M \to \com^{\ast}$ be again canonical admits two possible interpretations. The more restricted interpretation stems from considering that a gauge transformation disassociates the base and fiber metrics, acting only on the latter, which entails considering that  $g\cdot(\delbar, s, h)$ is canonical with respect to the fixed K\"ahler structure $(h, J)$ on $M$. With this interpretation, $\delbar$ is fixed, meaning $g^{-1}\circ \delbar \circ g = \delbar$, which is equivalent to $g$ being holomorphic, and $|g|^{2}h = h$, so that $|g|^{2} = 1$. A holomorphic function of constant norm is constant, so $g$ is constant, taking values in $\mathbb{S}^{1}$. The more liberal interpretation stems from regarding the base metric as induced by the fiber metric. This entails that $g\cdot (\delbar, s, h)$ be canonical with respect to the K\"ahler structure $(|g|^{2}h, J)$ (which determines the same Riemann surface structure). In this case it still must be that $\delbar$ is fixed, so that $g$ is holomorphic. If $M$ is compact, this already forces $g$ to be constant. However, if $M$ is noncompact, then it admits nonconstant holomorphic functions. Nonetheless, if $(\delbar, s, h)$ and $g\cdot (\delbar, s, h)$ both solve \eqref{sve}, the corresponding real pairs $(h, Y)$ and $(\tilde{h}, \tilde{Y})$ are related by $\tilde{h} = e^{2c}h$ and $\tilde{Y} = e^{-2c}(aY + bJY)$ where $g = e^{c}(a + \j b)$ and $a^{2} + b^{2} = 1$. Since $g$ is holomorphic $\lap_{h}c = 0$, so, since $\sR_{\tilde{h}} + 4\varep|\tilde{Y}|_{\tilde{h}}^{2} = e^{-2c}(\sR_{h} - 2\lap_{h}c + 4\varep|aY + bJY|^{2}_{h}) = e^{-2c}(\sR_{h} + 4\varep|Y|^{2}_{h}) = e^{-2c}\tau$ must be constant, $c$ must be constant as well. Again, that $|g|^{2}$ be constant and $g$ be holomorphic means $g$ is constant. With either interpretation the only elements of the complex gauge group acting on canonical solutions of the signed Abelian vortex equations are constants $z  = e^{c}e^{\j\theta} \in \com^{\ast}$. The corresponding action of $\com^{\ast}$ on pairs $(h, Y)$ is $z\cdot (h, Y) = (|z|^{2}h, |z|^{-2} \re (e^{-\j\theta}Y^{(1,0)})) = (e^{2c}h, e^{-2c}Y^{\theta})$, where $Y^{\theta}  = \cos \theta Y + \sin \theta JY$ is the real part of $e^{-\j\theta}Y^{(1,0)}$. In general this action does not preserve the property that $Y$ be Killing. A precise statement is the following.
\begin{lemma}\label{killinggaugelemma}
Let $M$ be a surface with a K\"ahler structure $(h, J)$ and let $Y \in \Ga(TM)$ be a Killing field for $h$. If for some $\theta \in (0, 2\pi)$ the vector field $Y^{\theta} = \cos \theta Y + \sin \theta JY$ is Killing for $h$ then either $Y$ is parallel or $\theta = \pi$ and $Y^{\theta} = -Y$.
\end{lemma}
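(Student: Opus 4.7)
The plan is to split on whether $\sin\theta$ vanishes and, in the remaining interesting case, exploit two-dimensionality to pin down $\nabla Y$.

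First I would dispose of the case $\sin\theta = 0$: within $(0, 2\pi)$ this forces $\theta = \pi$, so $Y^\theta = -Y$, which is trivially Killing and realizes the stated exceptional conclusion. Assume henceforth $\sin\theta \neq 0$. Linearity of the Killing equation in the vector field then gives that $JY = (\sin\theta)^{-1}(Y^\theta - (\cos\theta)Y)$ is itself Killing.

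Since $Y$ is Killing, the endomorphism $X \mapsto \nabla_{X} Y$ is $h$-skew; on an oriented Riemannian surface every $h$-skew endomorphism of $TM$ is a scalar multiple of $J$, so there exists a smooth function $\lambda$ on $M$ with $\nabla_{X}Y = \lambda JX$. Using $J^{2} = -\id$ and $h(JY, Y) = 0$, I would compute
\[
\tfrac{1}{2}\nabla_{JY}|Y|_{h}^{2} \,=\, h(\nabla_{JY}Y, Y) \,=\, h(\lambda J(JY), Y) \,=\, -\lambda |Y|_{h}^{2}.
\]
On the other hand, $JY$ being Killing preserves its own norm, and $|JY|_{h}^{2} = |Y|_{h}^{2}$ because $J$ is an $h$-isometry (the K\"ahler assumption), so $\nabla_{JY}|Y|_{h}^{2} = 0$. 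Combining these identities yields $\lambda |Y|_{h}^{2} \equiv 0$.

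The final step is to conclude $\lambda \equiv 0$, and hence $\nabla Y \equiv 0$. Where $Y$ is nonzero this is immediate from the identity just obtained; if $Y \equiv 0$ it is trivially parallel. The remaining possibility, that the zero set of $Y$ has nonempty interior on some component on which $Y$ is not identically zero, is ruled out by the unique-continuation property of Killing fields on a connected manifold (they are determined by their value and first covariant derivative at a single point), so the non-vanishing locus is either empty or open and dense, and continuity of $\lambda$ finishes the argument in the latter case. The main obstacle is arguably only this last continuity/unique-continuation step, though it is entirely standard; the substantive content lies in matching the two computations of $\nabla_{JY}|Y|_{h}^{2}$.
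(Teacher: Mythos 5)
Your proof is correct, and its first half is identical to the paper's: dispose of $\sin\theta=0$ (forcing $\theta=\pi$), then use linearity of the Killing equation to conclude that $JY$ is Killing when $\sin\theta\neq 0$. Where you diverge is the endgame. The paper works with dual one-forms: with $\ga=\imt(Y)h$, the form $\star\ga$ is dual to $JY$; $Y$ being Killing makes $\ga$ coclosed, which on a surface is equivalent to $d\star\ga=0$, while $JY$ being Killing kills the symmetric part of $D\star\ga$, so $2D\star\ga=d\star\ga=0$ and $JY$ (hence $Y$) is parallel, pointwise and with no further argument. Your route --- writing $\nabla_{X}Y=\lambda JX$, computing $\nabla_{JY}|Y|_{h}^{2}$ two ways to obtain $\lambda|Y|_{h}^{2}\equiv 0$, and then invoking the fact that the zero set of a nontrivial Killing field has empty interior --- is valid, but it pays for contracting against $Y$ with the extra unique-continuation/density step. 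You could eliminate that step entirely within your own framework: since $\nabla_{X}(JY)=J\nabla_{X}Y=\lambda J^{2}X=-\lambda X$, the endomorphism $\nabla(JY)$ is $h$-self-adjoint, and the Killing condition for $JY$ says precisely that its symmetric part vanishes, forcing $\lambda\equiv 0$ everywhere at once. That observation is exactly the paper's computation translated out of the language of differential forms, and it is the only substantive efficiency you are missing.
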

\begin{proof}
Suppose $Y$ and $Y^{\theta}$ are Killing for some $\theta \in (0, 2\pi)$. Let $\ga$ be the one-form dual to $Y$; then $\star \ga$ is the one-form dual to $JY$. Since $Y$ is Killing, $\ga$ is coclosed. Since $Y$ and $Y^{\theta}$ are Killing, so is $\sin \theta JY = Y^{\theta} - \cos \theta Y$. If $\sin \theta \neq 0$ this means $JY$ is Killing, and so $2D\star \ga = d\star \ga = 0$, the last equality becaue $\ga$ is coclosed. In this case, $JY$ is parallel, and so $Y$ is parallel. Hence if both $Y$ and $Y^{\theta}$ are Killing for some $\theta \in (0, 2\pi)$ then either $Y$ is parallel, or $\theta = \pi$ and $Y^{\theta} = -Y$. 
\end{proof}

It follows that if $(h, Y)$ solves \eqref{vlike} then so does $\pm e^{c} \cdot (h, Y) = (e^{2c}h, \pm e^{-2c}Y)$ for all $c \in \rea$. Since in general homothetic metrics need not be diffeomorphic, it is at first not clear in what sense the solutions $e^{c}\cdot (h, Y)$ and $(h, Y)$ are equivalent. However, since neither the Levi-Civita connection of $e^{2c}h$ nor the one-form $\ga$ dual to $e^{-2c}Y$ via $e^{2c}h$ depends on $c$, the resulting Weyl connection is independent of $c$. Hence, in the case $\varep = -1$ corresponding to the Einstein Weyl equations, the solutions $e^{c}\cdot (h, Y)$ and $(h, Y)$ are equivalent in the sense that they determine the same Einstein Weyl structure. This justifies regarding $(h, Y)$ and $e^{c}\cdot (h, Y)$ as equivalent even though they are not related by a diffeomorphism, and it is natural to extend this notion of equivalence to the $\varep = 1$ case as well. Solutions of \eqref{vlike} equivalent in this sense will be said to be \textit{scaling equivalent}. As was explained above, scaling equivalence is a vestigial manifestation of the gauge equivalence (in the broader sense) of the associated canonical solutions of the signed Abelian vortex equations. It is also the case that if $(h, Y)$ solves \eqref{vlike} then so too does $(h, -Y)$, and by the preceeding these solutions could also be considered equivalent. On the other hand, while it can happen that $(h, Y)$ and $(h, -Y)$ be diffeomorphism equivalent if there is an isometry of $h$ sending $Y$ to $-Y$, in the $\varep = -1$ case, $(h, Y)$ and $(h, -Y)$ generally induce nonisomorphic Weyl structures. 

\subsection{}\label{solsection}
A \textit{Ricci soliton} on a surface is a Riemannian metric $h$ for which there are a vector field $X$ and a constant $c \in \rea$ such that $\tfrac{1}{2}\sR_{h}h_{ij} + \tfrac{1}{2}(\lie_{X}h)_{ij} = c h_{ij}$. It is a \textit{gradient Ricci soliton} if $X$ is the $h$-gradient of a smooth function $f$. In this case $\sR_{h} + \lap_{h}f = 2c$. Differentiating this and using the Ricci identity shows that $d\sR_{h} = \sR_{h}df$, from which it follows that $\sR_{h} + |df|^{2}_{h} - 2c f$ is constant. The gradient $X^{i} = h^{ip}df_{p}$ of the potential of a gradient Ricci soliton is conformal Killing, for by definition $\lie_{X}h = (2c - \sR_{h})h$. In particular, in the case the gradient Ricci soliton $h$ is \textit{steady}, meaning $c =0$, the pair $(h, \tfrac{1}{2}X)$ solves the variant of \eqref{vlike} in which the vector field may be conformal Killing. However, since $Ddf$ is symmetric, $X$ itself is Killing if and only if it is parallel, in which case $h$ has constant curvature $\sR_{h} = 2c$. On the other hand, $JX$ is always Killing, where $J$ is the complex structure determined by $h$ and a given orientation on the surface, for $2J_{j}\,^{p}D_{i}df_{p} = (\sR_{h} - 2c)\om_{ij}$, where $\om_{ij} = J_{i}\,^{p}h_{pj}$ is the K\"ahler form of $(h, J)$, so that $\lie_{JX}h = 0$. By Lemma \ref{killinggaugelemma} either $X^{\theta}  = \cos \theta X + \sin \theta JX$ is parallel for all $\theta \in [0, 2\pi)$ and $h$ has constant curvature or among the conformal Killing fields $X^{\theta}$ exactly $\pm JX$ are Killing. In the latter case, $(h, \pm \tfrac{1}{2}JX)$ are solutions of the vortex-like equations \eqref{vlike}. By Theorem $10.1$ of \cite{Hamilton-riccisurfaces}, any Ricci soliton on a compact surface has constant curvature, so no interesting examples of solutions to \eqref{vlike} arise in this way on compact surfaces. On the other hand, there are nontrivial steady Ricci solitons on noncompact surfaces, and these yield solutions of \eqref{vlike}, as is detailed in section \ref{solitoncasesection}. 

\subsection{}\label{rfsection}
This section records some general facts about solutions to \eqref{vlike} that will be used in section \ref{metricssection} to find explicit solutions. In the particular case of Einstein Weyl structures, part of the discussion appears in some equivalent form in sections $5$ and $6$ of \cite{Fox-2dahs}; see in particular Lemma $6.4$ of \cite{Fox-2dahs}.

On an oriented surface $M$ consider a pair $(h, Y)$. Let $J$ be the complex structure determined by $h$ and the given orientation, let $\om_{h}$ be the K\"ahler form of $h$, and let $D$ be the Levi-Civita connection of $h$. Define a one-form $\ga$ by $\ga = \imt(Y)h$. The Hodge star on one-forms is given by $\star \al = -\al \circ J$. Write $F = -d\ga$ and define a function $\fd_{h}$ by $2F = \fd_{h}\om_{h}$ (equivalently, $2\star F = \fd_{h}$). If $Y$ is a Killing field, then $\ga$ is coclosed, so $d\star \ga = 0$. Let $\tilde{M}$ be the universal cover of $M$. The pullbacks to $\tilde{M}$ of objects defined on $M$ will be written with the same notation. The pullback of $(h, Y)$ to $\tilde{M}$ by definition comprises the pullback of $h$ and the unique vector field on $\tilde{M}$ projecting to $Y$. On $\tilde{M}$ there is a globally defined function $\mom$ such that $d\mom = -\star \ga$. By definition $|\ga|^{2}_{h}\om_{h} = \ga \wedge \star \ga = d\mom \wedge \ga$. Interior multiplying with $Y$ shows that $|\ga|^{2}_{h}(\imt(Y)\om_{h} + d\mom) = 0$. Since $Y$ is the real part of a holomorphic vector field, its zeros are isolated points, and so the preceeding identity implies $\imt(Y)\om_{h} + d\mom = 0$ on $\tilde{M}$. When $Y$ is complete this means that $\mom$ is a moment map for the action generated by $Y$ on $\tilde{M}$, and in what follows $\mom$ will be called a moment map even if $Y$ is not assumed complete.

Now suppose $(h, Y)$ solves the real vortex equations \eqref{vlike}. Since $Y$ is Killing, $4D\ga = 2d\ga = -\fd_{h}\om_{h}$. Observe that $Y^{p}\om_{ip} = \ga_{p}J_{i}\,^{p} = -(\star \ga)_{i}$. This has the consequences,
\begin{align}\label{dyh}
D_{i}|Y|^{2}_{h} = D_{i}|\ga|_{h}^{2} = 2Y^{p}D_{i}\ga_{p} = \tfrac{1}{2}\fd_{h}(\star \ga)_{i},
\end{align}
\begin{align}\label{dstarga}
&-Dd\mom = D\star \ga = \tfrac{1}{4}\fd_{h}h,
\end{align}
the first equality in \eqref{dstarga} when $\mom$ is well defined. By \eqref{dstarga} the function $\mom$ on $\tilde{M}$ is what is called a \textit{concircular scalar field} by Y. Tashiro in \cite{Tashiro}. By Theorem $1$ of \cite{Tashiro} the number of critical points of a concircular scalar field on a complete Riemannian manifold is at most two, and, applying this to $\mom$, it follows that if the metric $h$ of a solution $(h, Y)$ of the real vortex equations is complete then $Y$ has at most two zeros. Lemma \ref{twozerolemma} below obtains a stronger conclusion with a weaker hypothesis.

Using \eqref{dstarga} it is straightforward to show that $D$ is given by
\begin{align}\label{krlc}
&D_{Y}Y = -\tfrac{1}{4}\fd_{h}JY,& &D_{JY}Y = D_{Y}JY= JD_{Y}Y = \tfrac{1}{4}\fd_{h}Y,& &D_{JY}JY = JD_{JY}Y = \tfrac{1}{4}\fd_{h}JY.
\end{align}
From \eqref{krlc} it follows that $D_{JY}JY \wedge JY = 0$, so that the integral curves of $JY$ are projective geodesics, meaning their images coincide with the images of $h$-geodesics. The unit norm vector field $U = -|Y|^{-1}_{h}JY$, defined on the open dense complement $M^{\ast}$ of the zero locus of $Y$, satisfies $D_{U}U = 0$, so that its nontrivial integral curves are $h$-geodesics. 

Differentiating the first equation of \eqref{realvortex} and using \eqref{dyh} yields
\begin{align}\label{drd}
d \sR_{h} = -4\varep d|\ga|_{h}^{2}  =  - 2\varep \fd_{h} \star \ga.
\end{align}
The full curvature of $D$ is $R_{ijkl} = R_{ijk}\,^{p}h_{pl} = \sR_{h} h_{l[i}h_{j]k}$ and so $\om^{ij}R_{ijkl} = -\sR_{h} \om_{kl}$. Applying this and the Ricci identity yields
\begin{align}
\begin{split}
D_{i}\fd_{h} & = D_{i}(\om^{pq}F_{pq}) = -\om^{pq}D_{i}d\ga_{pq} = -2\om^{pq}D_{i}D_{p}\ga_{q} = -2\om^{pq}D_{p}D_{i}\ga_{q} + 2\om^{pq}R_{i[pq]}\,^{a}\ga_{q}\\
& = 2\om^{pq}D_{[p}D_{q]}\ga_{i} - \om^{pq}R_{pqi}\,^{a} = -2\om^{pq}R_{pqi}\,^{a} = 2\sR_{h} J_{i}\,^{p}\ga_{p} = -2\sR_{h}(\star \ga)_{i},
\end{split}
\end{align}
which proves
\begin{align}\label{dfd}
d\fd_{h} = -2\sR_{h} \star \ga.
\end{align}
An immediate consequence of \eqref{dfd} is that if $(h, Y)$ is a nontrivial solution of \eqref{vlike} and $Y$ is parallel, then $h$ is flat, for that $Y$ be parallel implies that $\fd_{h} = 0$, and in \eqref{dfd} this forces $\sR_{h} = 0$, since $\ga$ never vanishes.
\begin{lemma}\label{squaresconstantlemma}
Let $(h, Y)$ solve the real vortex equations \eqref{vlike} on the oriented surface $M$ with parameters $\tau = \sR_{h} + 4\varep|Y|_{h}^{2}$ and $\varep = \pm 1$. For $\ga = \imt(Y)h$ let $\mom$ be a primitive of $-\star \ga$ on the universal cover $\tilde{M}$ of $M$. Let $\la$ be $1$ or $\j$ as $\varep$ is $1$ or $-1$. Then 
\begin{enumerate}
\item \label{ub0} The quantity
\begin{align}\label{spdefined}
\sp = \sR_{h}^{2} -\varep \fd_{h}^{2}
\end{align}
is constant on $M$ and the functions $e^{\mp2\la \mom}(\sR \pm \la \fd)$ are constant on $\tilde{M}$.%, where  $\la$ is $1$ or $\j$ as $\varep$ is $1$ or $-1$.
\item\label{ub2} If $\varep = 1$ and one of $\sR_{h} \pm \fd_{h}$ is not identically zero, then $\mom$ is well defined as a function on $M$. Also, each of $\sR_{h} + \fd_{h}$ and $\sR_{h} - \fd_{h}$ has a definite sign if it is not identically zero.
\item\label{ub1} If $\varep = 1$, then $\sR_{h} \leq \tau$, with equality exactly where $Y$ vanishes, and $\sR_{h}^{2} \geq \sp$, with equality exactly where $\fd_{h}$ vanishes. In particular, 
\begin{enumerate}
\item if $\sp > 0$, then $\sR_{h}$ has a definite sign and either $\sqrt{\sp} \leq \sR_{h} \leq \tau$ or $\sR_{h} \leq \min\{\tau, -\sqrt{\sp}\} < 0$; 
\item if $\sp < 0$, then $\fd_{h}$ has a definite sign.
\end{enumerate}
\item\label{ub1b} If $\varep = -1$ then $-\sqrt{\sp} \leq \sR_{h} \leq \sqrt{\sp}$, with one of the inequalities an equality exactly where $\fd_{h}$ vanishes;  $-\sqrt{\sp} \leq \fd_{h} \leq \sqrt{\sp}$, with one of the inequalities an equality exactly where $\sR_{h}$ vanishes; and $\sR_{h} \geq \tau$, with equality exactly where $Y$ vanishes.
\item\label{ub4} A critical point of $\sR_{h}$ (resp. $\fd_{h}$) is either a zero of $\fd_{h}$ (resp. $\sR_{h})$ or a zero of $Y$. In the latter case it is also a critical point of $\fd_{h}$ (resp. $\sR_{h}$).
\item\label{ub5} There hold $\lap_{h}\sR_{h} + \sR_{h}^{2} + \sR_{h} = \tau + \sp$ and $\lap_{h}\fd_{h} = \tau \fd_{h} - 2\sR_{h}\fd_{h}$.
\item\label{fd1} Let $M^{\ast}$ be the complement in $M$ of the set of zeros of $Y$. If $\sp \neq 0$ then $\{p \in M^{\ast}:\fd_{h}(p) = 0\}$ is a union of closed images of $h$-geodesic integral curves of $Y$.
\end{enumerate}
\end{lemma}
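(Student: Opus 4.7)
The plan is to derive everything from \eqref{drd} and \eqref{dfd}, together with the relations $d\mom = -\star\ga$, $D\star\ga = \tfrac{1}{4}\fd_h h$ from \eqref{dstarga}, and the defining identity $\tau = \sR_h + 4\varep|Y|_h^2$.

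For (1), substituting \eqref{drd} and \eqref{dfd} into
\begin{align*}
d(\sR_h^2 - \varep\fd_h^2) = 2\sR_h\,d\sR_h - 2\varep\fd_h\,d\fd_h
\end{align*}
makes the two contributions cancel, so $\sp$ is locally, hence globally, constant on the connected surface $M$. For the factored statement, since $\la^2 = \varep$, direct algebra yields
\begin{align*}
d(\sR_h \pm \la\fd_h) = \mp 2\la(\sR_h \pm \la\fd_h)\star\ga = \pm 2\la(\sR_h \pm \la\fd_h)\,d\mom,
\end{align*}
from which $d\bigl(e^{\mp 2\la\mom}(\sR_h \pm \la\fd_h)\bigr) = 0$ on $\tilde M$ is immediate. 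For (2), when $\varep = 1$ the constants are real, so if $\sR_h + \fd_h$ is not identically zero then the constant $c_{+} = e^{-2\mom}(\sR_h + \fd_h)$ is nonzero and $\sR_h + \fd_h$ inherits its sign from $c_{+}$ because $e^{-2\mom} > 0$; furthermore $e^{2\mom} = c_{+}^{-1}(\sR_h + \fd_h)$ exhibits $e^{2\mom}$ as a positive function on $M$ itself, so $\mom$ descends to $M$ up to an additive constant. The same argument handles $\sR_h - \fd_h$.

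Parts (3) and (4) are consequences of (1) combined with $\tau - \sR_h = 4\varep|Y|_h^2$. In the $\varep = 1$ case, $\sp = \sR_h^2 - \fd_h^2$ gives $\sR_h^2 \geq \sp$ and the sign-of-$\sp$ dichotomy follows by taking square roots and using continuity of $\sR_h$ on the connected surface; in the $\varep = -1$ case, $\sp = \sR_h^2 + \fd_h^2 \geq 0$ directly controls both functions, and $\sR_h \geq \tau$ is the reversed inequality. For (5), \eqref{drd} shows $d\sR_h = 0$ precisely where $\fd_h\star\ga = 0$, and \eqref{dfd} shows $d\fd_h = 0$ precisely where $\sR_h\star\ga = 0$; at a zero of $Y$ both $\ga$ and $\star\ga$ vanish, so both differentials vanish simultaneously. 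For (6), differentiating \eqref{drd} once more and substituting \eqref{dstarga} and \eqref{dfd} yields $D_i d_j \sR_h = 4\varep\sR_h(\star\ga)_i(\star\ga)_j - \tfrac{1}{2}\varep\fd_h^2 h_{ij}$; tracing produces $\lap_h\sR_h = 4\varep\sR_h|Y|_h^2 - \varep\fd_h^2$, and using $4\varep|Y|_h^2 = \tau - \sR_h$ together with $\varep\fd_h^2 = \sR_h^2 - \sp$ gives the stated form. The symmetric calculation yields the identity for $\lap_h\fd_h$.

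Part (7) is the main geometric step. At any $p \in M^{\ast}$ with $\fd_h(p) = 0$, part (1) forces $\sR_h(p)^2 = \sp \neq 0$ (in the $\varep = -1$ case one additionally notes that $\sp = \sR_h^2 + \fd_h^2 > 0$ rules out $\sR_h(p) = 0$), so $\sR_h(p) \neq 0$; since $\star\ga$ is nowhere zero on $M^{\ast}$, \eqref{dfd} gives $d\fd_h(p) \neq 0$, and the implicit function theorem exhibits $\{\fd_h = 0\}\cap M^{\ast}$ as a smooth closed $1$-submanifold of $M^{\ast}$. Its tangent at $p$ is $\ker d\fd_h(p) = \ker(\star\ga)(p)$, which in dimension two is spanned by $Y(p)$, so each connected component is an integral curve of $Y$; along such a component \eqref{krlc} reduces to $D_Y Y = -\tfrac14\fd_h JY = 0$, so the curve is an $h$-geodesic. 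The principal obstacle is the careful bookkeeping required for (2) to ensure that $\mom$ really descends from $\tilde M$ to $M$, and the non-vanishing of $\sR_h$ needed in (7); both are controlled by the rigidity supplied by the logarithmic integral in (1).
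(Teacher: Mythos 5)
Your proposal is correct and follows essentially the same route as the paper: constancy of $\sp$ and of $e^{\mp 2\la\mom}(\sR_h \pm \la\fd_h)$ from \eqref{drd} and \eqref{dfd}, descent of $\mom$ via the logarithmic identity, the inequalities from $\tau - \sR_h = 4\varep|Y|_h^2$ and $\sp = \sR_h^2 - \varep\fd_h^2$, the Hessian computations for the Laplacian identities, and the regular-level-set argument combined with \eqref{krlc} for the geodesic claim. The only (immaterial) variation is in part (7), where you identify the components of $\{\fd_h=0\}\cap M^{\ast}$ as integral curves of $Y$ by computing the tangent line $\ker(\star\ga)$, while the paper shows instead that an integral curve of $Y$ through a zero of $\fd_h$ stays in the zero set.
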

\begin{proof}
Differentiating $\sp$ using \eqref{drd} and \eqref{dfd} yields
\begin{align}
\begin{split}
d(\sR^{2}_{h} -\varep \fd^{2}_{h}) &= 2\sR_{h} d\sR_{h} - 2\varep\fd_{h} d\fd_{h}  = -4\varep \sR_{h} \fd_{h} \star \ga + 4\varep \fd_{h} \sR_{h} \star \ga = 0,
\end{split}
\end{align}
so that $\sp$ is constant. By \eqref{drd} and \eqref{dfd},
\begin{align}
\begin{split}
d\left(e^{-2\la \mom}(\sR_{h} + \la \fd_{h})\right) & = e^{-2\la\mom}\left(d\sR_{h} + \la d\fd_{h}  + 2\la(\sR_{h} + \la \fd_{h})\star\ga\right)\\
& =  e^{-2\la\mom}\left(d\sR_{h} + 2\varep \fd_{h} \star \ga + \la (d\fd_{h} -2 \sR_{h}\star \ga)\right) = 0,
\end{split}
\end{align}
so $e^{-2\la \mom}(\sR_{h} + \la \fd_{h})$ is constant on $\tilde{M}$. Since $(h, -Y)$ also solves \eqref{vlike} with the parameters $\tau$ and $\varep$, and $-\mom$ is a moment map for $(h,-Y)$, $e^{2\la\mom}(\sR_{h} - \la \fd_{h})$ is also constant on $\tilde{M}$. The function $\mom$ is determined only up to addition of a constant, and it is straightforward to check that if $\sp \neq 0$ there is always a choice of $\mom$ such that $\sR_{h} + \la \fd_{h} = ce^{2\la \mom}$ and $\sR_{h} - \la \fd_{h} = \sign(\sp)ce^{-2\la\mom}$ where $c\in \rea$ satisfies $c^{2} = \sp$. When $\la = 1$ this implies $\mom$ is well defined on $M$. If both $\sR_{h}\pm \fd_{h}$ vanish, then $h$ is flat and $Y$ is parallel. Otherwise, there is a nonzero $c\in \rea$ such that $\mom$ equals one of $\pm (1/2)\log(c^{-1}(\sR_{h} \pm \fd_{h}))$, and so descends to $M$. When $\varep = 1$, the constancy of each of $e^{\mp 2\mom}(\sR_{h} \pm \fd_{h})$ means that each of $\sR_{h} \pm \fd_{h}$ has a definite sign if it is not identically zero. This shows \eqref{ub2}. 

If $\varep = 1$, then $\sR_{h} = \tau - 4|Y|^{2}_{h} \leq \tau$, with equality exactly where $Y$ vanishes, and $\sR_{h}^{2} = \sp + \fd_{h}^{2} \geq \sp$, with equality exactly where $\fd_{h}$ vanishes. The remainder of \eqref{ub1} follows straightforwardly. If $\varep = -1$ then $\sR_{h}^{2} = \sp - \fd_{h}^{2} \leq \sp$, with equality exactly where $\fd_{h}$ vanishes (and similarly with $\sR_{h}$ and $\fd_{h}$ interchanged), and $\sR_{h} = \tau + 4|Y|^{2}_{h} \geq \tau$, with equality exactly where $Y$ vanishes; this shows \eqref{ub1b}. Claim \eqref{ub4} follows from \eqref{drd} and \eqref{dfd}. Differentiating \eqref{drd} and substituting \eqref{dfd} and \eqref{dstarga} in the result yields
\begin{align}\label{ddsr}
\begin{split}
Dd\sR_{h} &= -2\varep d\fd_{h}\tensor \star \ga - 2\varep \fd_{h} D\star \ga= 4\varep \sR_{h}\star \ga \tensor \star \ga - (\varep/2)\fd_{h}^{2}h.
\end{split}
\end{align}
Tracing \eqref{ddsr} yields $\lap_{h}\sR_{h} = 4\varep\sR_{h}|Y|^{2}_{h} - \varep \fd_{h}^{2}$ which is equivalent to the first identity of \eqref{ub5}. Differentiating \eqref{dfd} and substituting \eqref{drd} and \eqref{dstarga} in the result yields
\begin{align}\label{ddfd}
\begin{split}
Dd\fd_{h} &= -2 d\sR_{h}\tensor \star \ga - 2 \sR_{h} D\star \ga= 4\varep \fd_{h}\star \ga \tensor \star \ga - (1/2)\fd_{h}\sR_{h}h.
\end{split}
\end{align}
Tracing \eqref{ddfd} yields $\lap_{h}\fd_{h} = 4\varep\fd_{h}|Y|^{2}_{h} -  \fd_{h}\sR_{h}$ which is equivalent to the second identity of \eqref{ub5}.

If $\sp \neq 0$, then $\fd_{h}(p) = 0$ implies $\sR_{h}(p) \neq 0$ and by \eqref{dfd} this implies $d\fd_{h}(p)\neq 0$ if $p \in M^{\ast}$. Thus $\{p \in M^{\ast}: \fd_{h}(p)\} = 0$ is a union of smooth closed one-dimensional submanifolds of $M^{\ast}$. Suppose $\fd_{h}(p) =0$ for some $p \in M^{\ast}$. Let $\nu:I \to \rea$ be a maximal integral curve of $Y$ such that $\nu(0) = p$. Then $\tfrac{d}{dt}\fd_{h}(\nu(t)) = d\fd_{h}(Y_{\nu(t)}) = 0$. Since $\fd_{h}(\nu(0)) = \fd_{h}(p) = 0$ this means $\fd_{h}(\nu(t)) = 0$ for all $t \in I$. By \eqref{krlc}, $4D_{\dot{\nu}}\dot{\nu} = - \fd_{h}(\nu(t))Y_{\nu(t)} = 0$, so $\nu(t)$ is an $h$-geodesic. This shows \eqref{fd1}. 
\end{proof}
Examples to be given later show that $\sR_{h}$ can be unbounded from below.

%%%% SAVE: the following paragraph is correct, but there is no need to include it, because it is not used.
%If $\la = 1$ and $\sp \neq 0$ then there is $c \in \reat$ such that $c^{2} = \sp$ and $\mom = (1/2)\log(c^{-1}(\sR_{h} + \fd_{h})) = -(1/2)\log(\sign(\sp)c^{-1}(\sR_{h} - \fd_{h})) = (1/4)\log(\sign(\sp)(\sR_{h} + \fd_{h})/(\sR_{h} - \fd_{h}))$. Similarly, when $\la = \j$ and $\sp \neq 0$, $\mom$ can be chosen so that $\mom =  (1/4)\log((\sR_{h} + \j\fd_{h})/(\sR_{h} - \j\fd_{h}))$. This shows $\mom$ has the form $\tfrac{1}{4\la}\log(\sign(\sp)(\tfrac{\sR_{h} + \la \fd_{h}}{\sR_{h} - \la \fd_{h}}))$; however, in principle $\mom$ could be multivalued when viewed as a function on $M$. 

Rewriting \eqref{dstarga} yields
\begin{align}\label{ddmom}
\pm 2Dd\mom  + \tfrac{1}{2}\sR_{h}h = \tfrac{1}{2}(\sR_{h} \pm \fd_{h})h,
\end{align}
so that $h$ is a gradient Ricci soliton with potential $\pm 2\mom$ if $\sR_{h} \pm \fd_{h}$ is constant. Lemma \ref{ricsollemma} shows that \eqref{ddmom} means that if $\sp = 0$ then $h$ is a steady gradient Ricci soliton with potential one of $\pm 2 \mom$. 

\begin{lemma}\label{ricsollemma}
For a solution $(h, Y)$  of the real vortex equations \eqref{vlike}, the following are equivalent:
\begin{enumerate}
\item\label{sp1} $\sp = 0$. 
\item\label{sp2} One of $\sR_{h} \pm \fd_{h}$ is constant.
\item\label{sp3} One of $\sR_{h} \pm \fd_{h}$ vanishes identically.
\end{enumerate}
If there hold \eqref{sp1}-\eqref{sp3} then:
\begin{enumerate}
\setcounter{enumi}{3}
\item\label{rs1} Either $\varep = -1$, $h$ is flat, and $Y$ is parallel or $\varep = 1$ and $h$ is a steady gradient Ricci soliton with potential $\pm 2 \mom$, as $\sR_{h} \pm \fd_{h}$ is constant.
\item \label{fd2} If $Y$ is not parallel, then $\fd_{h}$ and $\sR_{h}$ do not vanish on $M$.
\item \label{fd3} If $Y$ has a zero, then there holds one of the following: $h$ is flat and $Y$ is identically zero; $\max_{M}\sR_{h} = \tau < 0$; or $0<\sR_{h} \leq \tau = \max_{M}\sR_{h}$.
\end{enumerate}
\end{lemma}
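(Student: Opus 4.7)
The central tool is part \eqref{ub0} of Lemma \ref{squaresconstantlemma}, which writes $\sR_h + \la \fd_h = c_+ e^{2 \la \mom}$ and $\sR_h - \la \fd_h = c_- e^{-2 \la \mom}$ on $\tilde{M}$ for constants $c_\pm$ satisfying $c_+ c_- = \sp$. Since $Y$ is the real part of a holomorphic vector field its zeros are isolated, so $\mom$ is nonconstant whenever $Y \not\equiv 0$. Under this nontriviality assumption each combination $\sR_h \pm \la \fd_h$ is either identically zero or nowhere zero, and it is constant only when the corresponding $c_\pm$ vanishes. The equivalences $\eqref{sp1} \Leftrightarrow \eqref{sp2} \Leftrightarrow \eqref{sp3}$ then fall out of the product identity $c_+ c_- = \sp$; when $\varep = -1$ one uses that vanishing of the complex combination $\sR_h + \j \fd_h$ forces both $\sR_h \equiv 0$ and $\fd_h \equiv 0$, so both $\sR_h \pm \fd_h \equiv 0$. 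The trivial case $Y \equiv 0$ is handled separately: then $\fd_h = 0$ and $\sR_h = \tau$, so \eqref{sp1} and \eqref{sp3} both reduce to $\tau = 0$.

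For \eqref{rs1}, when $\varep = -1$ the identity $\sp = \sR_h^2 + \fd_h^2 = 0$ forces $\sR_h \equiv 0 \equiv \fd_h$; the former is flatness in dimension two, and the latter feeds into \eqref{krlc} to give $DY \equiv 0$, so $Y$ is parallel. When $\varep = 1$, one of $\sR_h \pm \fd_h$ vanishes identically by \eqref{sp3}, and the matching $\pm$ choice in \eqref{ddmom} collapses its right-hand side to zero, yielding $\tfrac{1}{2}\sR_h h + Dd(\pm 2 \mom) = 0$, the steady gradient Ricci soliton equation with potential $\pm 2 \mom$.

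For \eqref{fd2}, non-parallel $Y$ rules out $\varep = -1$ by \eqref{rs1}, so $\varep = 1$ and, say, $\sR_h + \fd_h \equiv 0$; the complementary combination reads $\sR_h - \fd_h = 2 \sR_h = c_- e^{-2 \mom}$, and $c_- = 0$ would give $\sR_h \equiv 0 \equiv \fd_h$ and hence $Y$ parallel by \eqref{krlc}, a contradiction, so $c_- \neq 0$ and both $\sR_h$ and $\fd_h = -\sR_h$ are nowhere zero. For \eqref{fd3}, at a zero $p_0$ of $Y$ equation \eqref{realvortex} gives $\sR_h(p_0) = \tau$, and part \eqref{ub1} of Lemma \ref{squaresconstantlemma} gives $\sR_h \leq \tau$ globally, so $\tau = \max_M \sR_h$. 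If $Y$ is parallel the zero propagates, so $Y \equiv 0$ and $\sR_h \equiv \tau$, and combining with $\sp = \tau^2 = 0$ yields $\tau = 0$ with $h$ flat; otherwise \eqref{fd2} applies and $\sR_h$ has a definite sign, giving the remaining two alternatives according to whether $\tau < 0$ or $\tau > 0$.

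The main obstacle I anticipate is the sign bookkeeping in \eqref{rs1} and \eqref{fd2}: one must track carefully which of $\sR_h \pm \fd_h$ vanishes, match that choice to the sign of the potential $\pm 2 \mom$, and keep the $\varep = 1$ and $\varep = -1$ cases (respectively $\la = 1$ and $\la = \j$) distinct. A related subtlety is that \eqref{sp2}, read literally, is tautological when $Y \equiv 0$, so isolating the trivial case at the outset and reserving the representation argument of \eqref{ub0} for solutions with $Y \not\equiv 0$ is essential.
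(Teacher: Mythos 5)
Your overall strategy is sound and genuinely different from the paper's: where the paper proves the equivalences by differentiating $\sR_{h} \pm \fd_{h}$ directly via \eqref{drd} and \eqref{dfd} and pairing the result with $\star \ga$, you instead exploit the closed-form representation $\sR_{h} \pm \la \fd_{h} = c_{\pm}e^{\pm 2\la \mom}$ from part \eqref{ub0} of Lemma \ref{squaresconstantlemma} together with the product identity $c_{+}c_{-} = \sp$. For $\varep = 1$ this is clean, and if anything tidier than the paper's argument (it makes transparent why one of $\sR_{h}\pm\fd_{h}$ vanishes \emph{identically}, not merely pointwise, when $\sp = 0$). Your treatments of \eqref{rs1}, \eqref{fd2}, \eqref{fd3}, and of the degenerate case $Y \equiv 0$ (where \eqref{sp2} is indeed vacuous and the paper's pairing argument silently assumes $\ga \not\equiv 0$) all check out.

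The one genuine gap is the implication \eqref{sp2} $\Rightarrow$ \eqref{sp1} when $\varep = -1$. There $\la = \j$, so your representation controls the \emph{complex} combinations $\sR_{h}\pm\j\fd_{h} = c_{\pm}e^{\pm 2\j\mom}$, whereas \eqref{sp2} asserts constancy of a \emph{real} combination $\sR_{h}\pm\fd_{h}$, which is not of the form $ce^{\pm 2\j\mom}$; the identity $c_{+}c_{-}=\sp$ says nothing directly about it, and your supplementary remark about $\varep = -1$ only covers the direction \eqref{sp1} $\Rightarrow$ \eqref{sp3}. The gap is fillable within your framework: since $\sR_{h}$ and $\fd_{h}$ are real, $c_{-} = \bar{c}_{+}$ and $\sR_{h}+\fd_{h} = \re\left((1-\j)c_{+}e^{2\j\mom}\right)$; as $\mom$ is a nonconstant continuous function on a connected set, $2\mom$ sweeps out a nondegenerate interval, and $\theta \mapsto \re (we^{\j\theta})$ is constant on such an interval only if $w = 0$, whence $c_{+} = 0$ and $\sp = |c_{+}|^{2} = 0$. (The paper instead deduces from its pairing argument that $2\sR_{h} = \sR_{h}\pm\fd_{h}$ is constant, hence $|Y|_{h}^{2}$ is constant by \eqref{realvortex}, hence $\sR_{h} = -\lap_{h}\log|Y|_{h}^{2} = 0$ and $h$ is flat.) Either repair should be written in before the argument is complete.
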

\begin{proof}
Obviously \eqref{sp3} implies \eqref{sp2}. Suppose there holds \eqref{sp2}. Then, by \eqref{drd} and \eqref{dfd},
\begin{align}\label{spint}
0 = d(\sR_{h} \pm \fd_{h}) = \mp 2(\sR_{h} \pm \ep \fd_{h})\star \ga.
\end{align}
Pairing \eqref{spint} with $\star \ga$ shows that $(\sR_{h} \pm \ep \fd_{h})|\ga|_{h}^{2} = 0$, and since $\ga$ has isolated zeros this forces $\sR_{h} \pm \ep \fd_{h} = 0$. If $\ep = 1$ this gives $\sR_{h}  = \mp \fd_{h}$, and so $\sp = \sR_{h}^{2} - \fd_{h}^{2} = 0$. If $\ep = -1$ it gives $\sR_{h} = \pm \fd_{h}$, which means that $2\sR_{h} = \sR_{h} \pm \fd_{h}$ is a constant. As $(h, Y)$ solves \eqref{realvortex}, this means that $|\ga|_{h}^{2}$ is constant. If $\ga$ has a zero, then it must be identically $0$, in which case $\fd_{h} = 0$, and so also $\sR_{h} = 0$ and $\sp = 0$. If $\ga$ has no zero, since $Y$ is Killing, there holds $\sR_{h} = - \lap_{h}\log |Y|^{2}_{h}$ and this vanishes since $|Y|^{2}_{h}$ is constant, so $h$ is flat and $\pm \fd_{h} = \sR_{h} = 0$, and hence also $\sp = 0$. This shows that \eqref{sp2} implies \eqref{sp1}. Suppose $\sp = 0$. If $\varep = -1$ then, by definition, $0 = \sp = \sR_{h}^{2} + \fd_{h}^{2}$, so $\sR_{h} = 0 = \fd_{h}$. Hence $h$ is flat and $Y$ is parallel. If $\varep = 1$, then $\sR_{h}^{2} = \fd_{h}^{2}$. In particular one of $\sR_{h} \pm \fd_{h}$ vanishes. This shows that \eqref{sp1} implies \eqref{sp3}. 

Now suppose there hold \eqref{sp1}-\eqref{sp3}. As $\sR_{h} = \mp \fd_{h}$, by \eqref{ddmom}, $h$ is a steady gradient Ricci soliton with potential one of $\pm 2 \mom$. If $\varep = -1$ then $0 = \sp = \sR^{2}_{h} + \fd_{h}^{2}$ implies $h$ is flat and $Y$ is parallel, and the conclusion of \eqref{fd3} follows. This shows \eqref{rs1}. If $\sp = 0$ and $Y$ is not parallel, then, by \eqref{rs1} it can be supposed $\varep = 1$, and so, by \eqref{sp1}-\eqref{sp3}, one of $\sR_{h} \pm \fd_{h}$ is identically zero and the other equals $2\sR_{h}$. By \eqref{ub2} of Lemma \ref{squaresconstantlemma}, $2\sR_{h}$ is either identically zero or never zero. Since $Y$ is not parallel, $\fd_{h}$ is not identically zero. Since $\sR_{h}^{2} = \fd_{h}^{2}$, $\sR_{h}$ is not identically zero, and so, by the preceeding, neither $\fd_{h}$ nor $\sR_{h}$ vanishes on $M$. This shows \eqref{fd2}. If $\sp = 0$, $\varep = 1$, and $Y$ vanishes at $p \in M$, then, since $\tau = \sR_{h} + 4|Y|^{2}_{h}$ is constant, $\tau = \sR_{h}(p) = \max_{M}\sR_{h}$. As before, at least one of $\sR_{h} \pm \fd_{h}$ is identically zero, and the other equals $2\sR_{h}$, which must be either nowhere zero or identically zero. In the latter case $h$ is flat and $\fd_{h} = 0$, so $Y$ is parallel, so either has no zero or is identically zero. If $h$ is not flat, then $\sR_{h}$ has a definite sign. If the sign is negative, since $\max_{M}\sR_{h} = \sR_{h}(p) = \tau$, there holds $\sR_{h} \leq \tau < 0$, while if the sign is positive there holds $0<\sR_{h} \leq \tau$. This shows \eqref{fd3}.
\end{proof}

Since, by Theorem $10.1$ of \cite{Hamilton-riccisurfaces}, a gradient Ricci soliton on a compact surface has constant curvature, on a compact surface the metric $h$ of a solution $(h, Y)$ of \eqref{vlike} with $\sp = 0$ has constant curvature. By \eqref{realvortex} this implies that $Y$ has constant norm, so either $Y$ is identically zero, or $Y$ has no zeros. In the latter case the argument in the proof of Lemma \ref{ricsollemma} shows that $h$ must be flat and $Y$ must be parallel. However, it will be seen that in the noncompact case gradient Ricci solitons give rise to nontrivial solutions of the equations \eqref{vlike}.

Let $(h, Y)$ solve \eqref{vlike} with parameters $\tau$ and $\varep$ and let $\sp$ be the constant \eqref{spdefined}. Define a constant $\rho$ by $4\varep \rho = \tau^{2} -\sp$.  Substracting the square of $\sR_{h} = \tau - 4\varep|Y|^{2}_{h}$ from $\sp = \sR^{2}_{h} - \varep \fd_{h}^{2}$ yields
\begin{align}\label{sitau}
4\rho = \fd_{h}^{2} + 8\tau |Y|^{2}_{h} - 16\varep|Y|_{h}^{4}.
\end{align}
Lemma \ref{uniquenesslemma} will show that, in a sense to be made precise, the sign of $\sp$ and the numerical value of $\rho$ are preserved by the Ricci flow. In the remainder of the present section the number of zeros of $Y$ will be related to the signs of $\sp$ and $\rho$, and there will be shown that in fact $Y$ has at most two zeros in $M$ and the possible conformal types for the underlying Riemann surface are quite limited. 
 
An immediate consequence of \eqref{sitau} is that if $\rho < 0$ then $Y$ has no zeros. In particular, if a compact orientable surface admits a nontrivial solution of \eqref{vlike} with $\rho < 0$ then it is a torus. Similarly, for a nontrivial solution $(h, Y)$, if $\rho \neq 0$, then $\fd$ does not vanish at the zeros of $Y$, for if $Y$ and $\fd_{h}$ both vanish at some $p \in M$, then by \eqref{sitau}, $\rho = 0$. If $\varep = -1$ and $\tau > 0$ then by \eqref{sitau}, $\rho > 0$. 

\begin{lemma}\label{spneglemma}
Let $(h, Y)$ be a nontrivial solution of the real vortex equations on $M$. If $\sp < 0$ then $\varep = 1$, $\rho > 0$, and $|Y|^{2}_{h}$ has neither a maximum nor a positive minimum on $M$. In particular, $M$ is noncompact.
\end{lemma}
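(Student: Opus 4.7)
The plan is to deduce each conclusion in turn from the identities of Lemma \ref{squaresconstantlemma}, and then use the absence of critical points of $|Y|_h^2$ in the interior of its range to rule out compactness.

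First, observe that if $\varep = -1$ then $\sp = \sR_{h}^{2} + \fd_{h}^{2} \geq 0$, so $\sp < 0$ forces $\varep = 1$. With $\varep = 1$, the definition $4\varep \rho = \tau^{2} - \sp$ becomes $4\rho = \tau^{2} - \sp > \tau^{2} \geq 0$, giving $\rho > 0$.

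Next I would analyze the critical points of $|Y|_{h}^{2}$. Since $\varep = 1$, the vortex relation $\sR_{h} = \tau - 4|Y|_{h}^{2}$ shows that the critical set of $|Y|_{h}^{2}$ coincides with the critical set of $\sR_{h}$. By part \eqref{ub4} of Lemma \ref{squaresconstantlemma}, every critical point of $\sR_{h}$ is either a zero of $\fd_{h}$ or a zero of $Y$. Now, with $\varep = 1$ and $\sp < 0$, the identity $\fd_{h}^{2} = \sR_{h}^{2} - \sp$ shows $\fd_{h}^{2} \geq -\sp > 0$, so $\fd_{h}$ is nowhere zero (this is part \eqref{ub1}(b) of Lemma \ref{squaresconstantlemma}). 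Consequently every critical point of $|Y|_{h}^{2}$ must be a zero of $Y$, and at such a point $|Y|_{h}^{2} = 0$.

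From this the conclusions about extrema follow immediately. A positive minimum of $|Y|_{h}^{2}$ would be attained at a critical point, hence at a zero of $Y$, where the value is $0$, a contradiction. A maximum of $|Y|_{h}^{2}$ would likewise be attained at a zero of $Y$, forcing $|Y|_{h}^{2} \equiv 0$, contrary to the nontriviality of the solution. Finally, if $M$ were compact then $|Y|_{h}^{2}$ would attain its maximum, which by the previous sentence is impossible; hence $M$ is noncompact.

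There is no real obstacle; the argument is essentially a direct application of Lemma \ref{squaresconstantlemma}. The only point that requires a moment of attention is verifying that the hypothesis $\sp<0$ forces $\fd_{h}$ to be nowhere zero (not merely to have constant sign), so that critical points of $|Y|_{h}^{2}$ in $M^{\ast}$ are excluded.
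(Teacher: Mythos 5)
Your proposal is correct and follows essentially the same route as the paper: both deduce $\varep=1$ and $\rho>0$ from the sign of $\sp$, identify critical points of $|Y|_h^2$ with critical points of $\sR_h$, and use the dichotomy of Lemma \ref{squaresconstantlemma}\eqref{ub4} together with $\sp=\sR_h^2-\fd_h^2$ to rule out extrema. The only cosmetic difference is that you first show $\fd_h$ is nowhere zero and then locate all critical points at zeros of $Y$, whereas the paper derives the contradiction $0>\sp=\sR_h(p)^2$ directly at the putative extremum; these are the same facts in a different order.
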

\begin{proof}
If $\sR_{h}^{2} - \varep \fd_{h}^{2} = \sp < 0$ then $\varep = 1$, and so $\rho = (\tau^{2} - \sp)/4 > 0$.  Since $\tau = \sR_{h} + 4|Y|_{h}^{2}$ is constant, a critical point of $|Y|^{2}_{h}$ is a critical point of $\sR_{h}$, and by \eqref{drd} such a point is either a zero of $\fd_{h}$ or a zero of $Y$. That $|Y|^{2}_{h}$ have either a maximum or a positive minimum at $p \in M$ yields the contradiction $0 > \sp = \sR_{h}(p)^{2}$. 
\end{proof}

\begin{lemma}\label{twozerolemma}
Let $(h, Y)$ be a nontrivial solution of the real vortex equations on a connected orientable surface $M$ and let $J$ be the complex structure determined by $h$ and the given orientation. If $Y$ is complete, then $(M, J)$ is biholomorphic to one of the following Riemann surfaces: the sphere $\proj^{1}(\com)$, the plane $\com$, the punctured plane $\com \setminus \{0\}$, a torus, the disc $\disc = \{z \in \com:|z| < 1\}$, the punctured disc $\disc \setminus \{0\}$, or an annulus $\annulus(r) = \{z \in\com: r < |z| < 1\}$. In particular  $M$ has abelian fundamental group, and $Y$ has no more than two zeros. Moreover:
\begin{enumerate}
\item\label{spherezero} If $Y$ has a zero and $M$ is compact, then $Y$ has two zeros, $M$ is a sphere, $\sp > 0$, and $\rho \geq 0$. Moreover, if $\rho = 0$ then $\varep = -1$.
\item\label{toruszero} If $Y$ has no zero and $M$ is compact, then $M$ is a torus. If $\varep = 1$ then $h$ is flat, $Y$ is parallel, and $\sp = 0$, while if $\varep = -1$ then $\rho < 0$.
\item If $Y$ has a zero and $M$ is noncompact, then $Y$ has one zero, $M$ is biholomorphic to $\com$ or $\disc$, and $\rho \geq 0$.
\item If $Y$ has no zeros and is not parallel then $M$ is noncompact.
\end{enumerate}
\end{lemma}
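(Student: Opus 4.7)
The strategy is to first pin down $(M, J)$ up to biholomorphism via the Uniformization Theorem combined with the one-parameter group of biholomorphisms furnished by $Y$, and then to dispatch the four moreover assertions using Lemmas \ref{squaresconstantlemma} and \ref{ricsollemma}, the Gauss--Bonnet theorem, and Hamilton's classification of compact Ricci solitons (Theorem $10.1$ of \cite{Hamilton-riccisurfaces}). Completeness of the Killing field $Y$ means its flow is a one-parameter group of orientation-preserving isometries, hence of biholomorphisms of $(M, J)$. At a zero $p$, \eqref{dstarga} gives $D_{i}Y_{j}|_{p} = -\tfrac{1}{4}\fd_{h}(p)\om_{ij}$, which is nonzero because a Killing field is determined by its one-jet at a point; consequently the flow rotates $T_{p}M$ with angular velocity $\fd_{h}(p)/4 \neq 0$ and is therefore periodic, so that $Y$ generates an effective $S^{1}$-action with isolated fixed points.

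Writing $(M, J) = \tilde M/\Gamma$ with $\tilde M \in \{\proj^{1}(\com), \com, \disc\}$, the lift $\tilde Y$ is a $\Gamma$-invariant complete holomorphic vector field, so $\Gamma$ lies in the centralizer in $\Aut(\tilde M)$ of the one-parameter group generated by $\tilde Y$. For $\tilde M = \proj^{1}(\com)$, simply-connectedness forces $\Gamma = \{1\}$. For $\tilde M = \com$, the only fixed-point-free elements of the affine group $\Aut(\com)$ are translations, so the free discrete subgroups lie in $(\com, +)$, yielding $\Gamma \in \{1,\, \integer\om,\, \integer\om_{1} \oplus \integer\om_{2}\}$ and $M \in \{\com, \comt, \torus\}$. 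For $\tilde M = \disc$, the one-parameter subgroups of $PSU(1,1)$ are elliptic (with an interior fixed point, so containing no nontrivial free discrete subgroup), hyperbolic (yielding the annulus $\annulus(r)$), or parabolic (yielding $\disc\setminus\{0\}$); and non-cyclic Fuchsian groups have trivial centralizer in $PSU(1,1)$. This produces the seven listed surfaces, all of which have abelian fundamental group; the precise zero-counting is completed in the case analysis that follows.

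For (1), the compact surfaces in the list are only $\sphere$ and $\torus$, but every automorphism of a torus is a translation and has no fixed point, so $M = \sphere$, on which every $S^{1}$-action has exactly two fixed points. At each zero $p$, $\sR_{h}(p) = \tau$ and $\fd_{h}(p) \neq 0$, so $\sp = \tau^{2} - \varep\fd_{h}(p)^{2}$ and $4\rho = \fd_{h}(p)^{2} > 0$. For $\varep = -1$ this immediately gives $\sp > 0$; for $\varep = 1$ the options $\sp < 0$ and $\sp = 0$ are excluded respectively by Lemma \ref{spneglemma} (which would force $M$ noncompact) and by Lemma \ref{ricsollemma} combined with Hamilton's theorem (a compact steady Ricci soliton has constant curvature, forcing $|Y|_{h}^{2}$ constant, inconsistent with isolated zeros of nontrivial $Y$). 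For (2), the sphere has nonzero Euler characteristic, so $Y$ without zeros forces $M = \torus$. When $\varep = 1$, the identity $\lap_{h}\sR_{h} = \tau\sR_{h} - 2\sR_{h}^{2} + \sp$, obtained from the computation of $\lap_{h}\sR_{h}$ in the proof of Lemma \ref{squaresconstantlemma} by substituting $4\varep|Y|_{h}^{2} = \tau - \sR_{h}$ and $\varep\fd_{h}^{2} = \sR_{h}^{2} - \sp$, integrates using Gauss--Bonnet to give $\sp\,\vol_{h}(M) = 2\int_{M}\sR_{h}^{2}\dvh$; having $\sp > 0$ would force $\sR_{h}$ to have a definite sign by Lemma \ref{squaresconstantlemma}\eqref{ub1}, contradicting $\int_M\sR_{h}\dvh = 0$, so $\sp = 0$, whence $\sR_{h} \equiv 0$, $|Y|_{h}^{2}$ is constant, and $Y$ is parallel. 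When $\varep = -1$, at a minimum $p$ of $|Y|_{h}^{2}$ one has $d|Y|_{h}^{2}(p) = 0$, so \eqref{dyh} together with the non-vanishing of $\ga$ on $M$ gives $\fd_{h}(p) = 0$, and \eqref{sitau} reduces to $4\rho = 8|Y|_{h}^{2}(p)(\tau + 2|Y|_{h}^{2}(p))$; Gauss--Bonnet applied to $\tau = \sR_{h} - 4|Y|_{h}^{2}$ yields $\tau = -4\int_M|Y|_{h}^{2}\dvh/\vol_{h}(M)$, and combining this with the inequality $|Y|_{h}^{2}(p) \leq \int_M|Y|_{h}^{2}\dvh/\vol_{h}(M)$ produces $\tau + 2|Y|_{h}^{2}(p) < 0$, hence $\rho < 0$.

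For (3), each of $\comt$, $\disc\setminus\{0\}$, and $\annulus(r)$ is topologically a cylinder, and the identity component of its automorphism group acts without fixed points; thus a zero of $Y$ forces $M$ simply connected, i.e.\ $\com$ or $\disc$, and on each of these the only one-parameter isometry subgroups with a fixed point are rotations about a unique interior point. The assertion $\rho \geq 0$ follows as in (1). Claim (4) is then the contrapositive of the statement obtained by combining (1), (2), and (3): a complete nontrivial Killing field on a compact surface either has zeros or is the parallel field on a flat torus. The step I expect to require the most care is the $\varep = -1$ portion of (2), where the strict inequality $\rho < 0$ has to be extracted from the combination of an extremum computation for $|Y|_{h}^{2}$ and an integration of the equation $\tau = \sR_{h} - 4|Y|_{h}^{2}$ against the volume form.
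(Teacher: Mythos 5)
Your classification of $(M,J)$ and your proofs of items (1)--(3) are sound and reach the paper's conclusions by partly different means, so the differences are worth recording. For the list of surfaces the paper cites the classification of Riemann surfaces with nondiscrete automorphism group (Farkas--Kra, Theorem V.$4.1$), while you reprove it by making the deck group centralize the lifted flow; both work. For the two-zero count on the sphere the paper uses the moment map $\mom$ together with Tashiro's bound on the critical points of a concircular scalar field, whereas you observe that $D_{i}Y_{j} = -\tfrac{1}{4}\fd_{h}\om_{ij}$ cannot vanish at a zero of a nontrivial Killing field, so the zeros are nondegenerate of index $+1$ and Poincar\'e--Hopf applies. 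That same observation gives $4\rho = \fd_{h}(p)^{2} > 0$ at any zero, which sharpens the stated ``$\rho \geq 0$'' to a strict inequality and makes the clause ``if $\rho = 0$ then $\varep = -1$'' vacuous; the paper obtains that clause by a separate evaluation at the maximum of $|Y|^{2}_{h}$ and does not record the strict positivity, so your version is a (correct) strengthening rather than an omission. In item (2) your integral identity $\sp\,\vol_{h}(M) = 2\int_{M}\sR_{h}^{2}\dvh$ for $\varep = 1$, and the averaging inequality for $|Y|^{2}_{h}$ for $\varep = -1$, replace the paper's pointwise arguments at the extremum of $|Y|^{2}_{h}$ using $\sR_{h} = -\lap_{h}\log|Y|^{2}_{h}$; both of your computations check out.

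The genuine problem is item (4). You deduce it as the contrapositive of ``a complete nontrivial Killing field on a compact surface either has zeros or is the parallel field on a flat torus,'' claiming this follows from (1)--(3). It does not: your item (2), like the paper's, establishes parallelness of $Y$ on the torus only when $\varep = 1$; when $\varep = -1$ it yields only $\rho < 0$ and leaves open a nowhere-vanishing, non-parallel $Y$ on a compact torus. This possibility is actually realized by the paper's own example in the case $\sp > 0$, $\rho < 0$, $\la = \j$ of section \ref{sppn}: an immortal Ricci flow on a compact torus with $\varep = -1$, nonconstant curvature, and $Y = \pr_{r}$ nowhere zero. So the intermediate statement you invoke is false, and your argument proves (4) only under the additional hypothesis $\varep = 1$. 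To be fair, the paper's own proof of (4) (``by the preceeding, $M$ cannot be compact'') has exactly the same defect, so the issue appears to lie with the statement of (4) itself; but as a proof of the lemma as written, this step is a gap you should flag rather than paper over.
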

\begin{proof}
Since $Y$ is complete, the flow of $Y^{(1,0)}$ is a one-parameter group of biholomorphisms of $M$, so the biholomorphism group of $M$ is not discrete. The surfaces listed in the statement of the lemma are exactly those Riemann surfaces with nondiscrete automorphism group; see, e.g. Theorem V.$4.1$ of \cite{Farkas-Kra}. If compact, $M$ is a sphere or torus, and if, moreover, $Y$ has zeros, $M$ must be a sphere. In this case \eqref{sitau} implies $\rho \geq 0$ and Lemmas \ref{ricsollemma} and \ref{spneglemma} imply $\sp > 0$. Suppose $|Y|^{2}$ assumes its maximum at $p \in M$. By \eqref{dyh}, $\fd_{h}(p) = 0$, so if $\rho = 0$ there holds $(\sR_{h}(p) + 4\varep |Y|^{2}_{h}(p))^{2} = \tau^{2} = \sp =  \sR_{h}^{2}(p)$, which forces $\varep = -1$. The zeros of $Y$ are the critical points of $\mom$. Since $Y$ is not trivial, $\mom$ is not constant, so must have at least two critical points. This shows $Y$ has two zeros. This shows \eqref{spherezero}. If $M$ is a torus then $|Y|^{2}_{h}$ assumes a minimum at some $p \in M$. If $\varep = 1$, then, because $\tau = \sR_{h} + 4|Y|^{2}_{h}$, at such a point $\sR_{h}$ assumes a maximum, and so $\max_{M}\sR_{h} = \sR_{h}(p) = -\lap_{h}\log|Y|^{2}_{h}(p) \leq 0$. By the Gauss-Bonnet theorem this forces $h$ to be flat, and so $\log |Y|^{2}_{h}$ is harmonic, and hence constant. Hence $\tau = 4|Y|^{2}_{h}$ and $0 = 2d|Y|^{2}_{h} = \fd_{h}\star \ga$, so $\fd_{h} = 0$ and $Y$ is parallel. In \eqref{sitau} this implies $4\rho = \tau^{2}$, so $\sp = 0$. On the other hand, if $\varep = -1$ then $\sR_{h}$ has a minimum at $p$ and so $\min_{M}\sR_{h} = \sR_{h}(p) = -\lap_{h}\log|Y|^{2}_{h}(p) \leq 0$, and by Gauss-Bonnet the inequality must be strict. Sinces $Y$ does not vanish at $p$ it follows from \eqref{dfd} that $\fd_{h}(p) = 0$ and so $\sp = \sR_{h}^{2}(p)$. On the other hand, $\tau  = \sR_{h}(p) - 4|Y|_{h}^{2}(p) < \sR_{h}(p) < 0$, so $\tau^{2} > \sR_{h}^{2}(p) = \sp$. Hence $4\rho = \sp - \tau^{2} <0$. This shows \eqref{toruszero}. If $M$ is noncompact and $Y$ has a zero, then both $M$ and the complement $M^{\ast}$ of the zeros of $Y$ must be among the surfaces listed in the statement of the lemma. The only possible pairs are $M = \com$ and $M^{\ast} = \com\setminus\{0\}$ and $M = \disc$ and $M^{\ast} = \disc \setminus \{0\}$. In this case $\rho \geq 0$ by \eqref{sitau}. Finally if $Y$ has no zeros and is not parallel, then by the preceeding, $M$ cannot be compact.
\end{proof}

\section{Ricci flows solving the real vortex equations}
\label{metricssection}
This section is dedicated to showing that there exist Ricci flows $\hh$ such that $(\hh, Y)$ solves the real vortex equations, and to constructing them explicitly. 

\subsection{}\label{uniquenesssection}
In any context in which its solutions are uniquene, e.g. on a complete manifold with bounded curvature, the Ricci flow preserves isometries in the sense that any isometry of the initial metric is an isometry of metrics later in the flow; see Corollary $1.2$ of \cite{Chen-Zhu}. In particular, a Killing field for the initial metric will be a Killing field all along the flow. What is not obvious is that the Ricci flow also preserves the compatibility condition \eqref{realvortex} between the metric and the Killing field.
Theorem \ref{flowtheorem} shows that given a solution $(h, Y)$ of the real vortex solutions there is locally a unique Ricci flow $\hh$ through $h$ such that $(\hh, Y)$ solves the real vortex equations. First, there is proved Lemma \ref{uniquenesslemma} which shows that if $\hh$ is a Ricci flow such that $(\hh, Y)$ solves \eqref{vlike}, then $\rho$ is constant along the flow, the sign of $\sp$ is preserved along the flow, and $\tau$ is monotonic along the flow. In addition to helping organize the possible solutions, these observations provide a priori restrictions on the values of the various parameters which are instrumental in the proof of Theorem \ref{flowtheorem}. The conclusion of Lemma \ref{uniquenesslemma} can  be viewed as generalizing the conclusion of Lemma \ref{ricsollemma}. 

\begin{lemma}\label{uniquenesslemma}
Let $(M, J)$ be a Riemann surface, let $\hh$ be a Ricci flow representing the given conformal structure and depending smoothly on $t$ in some open interval $I \subset \rea$, and let $Y$ be a fixed vector field on $M$. Suppose that for all $t \in I$ the pair $(\hh, Y)$ solves the real vortex equations \eqref{vlike} with constant $\tau(t)$ and parameter $\varep \in \{\pm 1\}$. Let $\tau(t)$ and $\sp(t)$ be the constants \eqref{realvortex} and \eqref{spdefined} determined by $(\hh, Y)$. Then  $\rho = (\tau(t)^{2} - \sp(t))/4\varep$ is constant in $t$ and $\tau(t)$ and $\sp(t)$ solve 
\begin{align}\label{tseq}
&\tfrac{d}{dt}\tau = \tau^{2} - 4\varep\rho = \sp, && \tfrac{d}{dt}\sp = 2\tau\sp. 
\end{align}
In particular, either $\sp(t) = 0$ and $\tau(t)$ is constant for all $t \in I$, or $\sp(t)$ has a definite sign on $I$ and $\tau(t)$ is monotone on $I$. 
\end{lemma}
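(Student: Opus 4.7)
The plan is to compute how the pointwise quantities $|Y|^{2}_{h}$, $\sR_{h}$, and $\fd_{h}$ evolve along the flow, and then combine the results to obtain ODEs for the spatially constant quantities $\tau(t)$ and $\sp(t)$. First I would gather the basic evolutions. In dimension two the Ricci flow reads $\partial_{t}h = -\sR_{h}h$, so since $Y$ is held fixed one reads off $\partial_{t}|Y|^{2}_{h} = -\sR_{h}|Y|^{2}_{h}$ and $\partial_{t}\om_{h} = -\sR_{h}\om_{h}$, and the standard two-dimensional identity $\partial_{t}\sR_{h} = \lap_{h}\sR_{h} + \sR_{h}^{2}$ holds. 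For the less immediate ingredient, note that $\ga = \imt(Y)h$ satisfies $\partial_{t}\ga = -\sR_{h}\ga$, so $\partial_{t}F = d\sR_{h}\wedge\ga - \sR_{h}F$. Substituting this and $\partial_{t}\om_{h} = -\sR_{h}\om_{h}$ into the defining relation $2F = \fd_{h}\om_{h}$, and using $d\sR_{h}\wedge\ga = 2\varep\fd_{h}|Y|^{2}_{h}\om_{h}$ (which follows from \eqref{drd} and $\ga\wedge\star\ga = |Y|^{2}_{h}\om_{h}$), gives the pointwise formula
\[
\partial_{t}\fd_{h} \;=\; 4\varep\fd_{h}|Y|^{2}_{h} \;=\; \fd_{h}(\tau - \sR_{h}),
\]
the last equality using $4\varep|Y|^{2}_{h} = \tau - \sR_{h}$.

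Next I would derive the ODEs. Differentiating $\tau = \sR_{h} + 4\varep|Y|^{2}_{h}$ in $t$ and substituting the evolutions above yields $\tfrac{d}{dt}\tau = \lap_{h}\sR_{h} + 2\sR_{h}^{2} - \tau\sR_{h}$. Tracing \eqref{ddsr} with $h^{ij}$ and applying the substitutions $4\varep|Y|^{2}_{h} = \tau - \sR_{h}$ and $\varep\fd_{h}^{2} = \sR_{h}^{2} - \sp$ produces the pointwise identity $\lap_{h}\sR_{h} = \tau\sR_{h} - 2\sR_{h}^{2} + \sp$; plugging this in collapses the expression for $\tfrac{d}{dt}\tau$ to the spatially constant value $\sp$. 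A parallel computation gives
\[
\tfrac{d}{dt}\sp \;=\; 2\sR_{h}(\lap_{h}\sR_{h} + \sR_{h}^{2}) - 2\varep\fd_{h}^{2}(\tau - \sR_{h}),
\]
and making the same two substitutions causes every power of $\sR_{h}$ to cancel, leaving $\tfrac{d}{dt}\sp = 2\tau\sp$. Combining the two ODEs gives $\tfrac{d}{dt}(\tau^{2} - \sp) = 2\tau\sp - 2\tau\sp = 0$, which shows $\rho$ is constant in $t$ and completes the derivation of \eqref{tseq}.

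The dichotomy at the end of the statement then follows from the fact that $\tfrac{d}{dt}\sp = 2\tau(t)\sp$ is a linear homogeneous ODE, so $\sp(t) = \sp(t_{0})\exp\!\bigl(\int_{t_{0}}^{t}2\tau(s)\,ds\bigr)$; hence $\sp$ is either identically zero on $I$ or of fixed sign on $I$, and correspondingly $\tfrac{d}{dt}\tau = \sp$ is either identically zero (so $\tau$ is constant) or of fixed sign (so $\tau$ is strictly monotone). The main obstacle I anticipate is the cancellation in the computation of $\tfrac{d}{dt}\sp$: a priori this derivative is a polynomial in the spatially varying fields $\sR_{h}$ and $\fd_{h}$, and it must collapse to the spatially constant quantity $2\tau\sp$; this cancellation is forced by the spatial constancy of $\sp$ but is not transparent from the raw evolution equations, requiring the pointwise identity for $\lap_{h}\sR_{h}$ and careful bookkeeping of signs.
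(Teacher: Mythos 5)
Your proposal is correct and follows essentially the same route as the paper: both derive $\tfrac{d}{dt}\tau=\sp$ from the traced Hessian identity for $\sR_{h}$ and both establish $\tfrac{d}{dt}\fd_{h}=4\varep\fd_{h}|Y|^{2}_{h}$ by evolving $F$ and $\om_{h}$. The only (immaterial) difference is ordering — the paper first proves $\rho$ constant by differentiating \eqref{sitau} and then deduces $\tfrac{d}{dt}\sp=2\tau\sp$, whereas you differentiate $\sp=\sR_{h}^{2}-\varep\fd_{h}^{2}$ directly and obtain the constancy of $\rho$ as a corollary.
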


\begin{proof}
Along $\hh$ there hold $\tfrac{d}{dt}\sR_{\hh} = \lap_{\hh}\sR_{\hh} + \sR_{\hh}^{2}$ (see \cite{Hamilton-riccisurfaces}) and $\tfrac{d}{dt}|Y|^{2}_{\hh} = -\sR_{h}|Y|_{\hh}^{2}$, so by \eqref{ub5} of Lemma \ref{squaresconstantlemma},
\begin{align}
\begin{split}
\tfrac{d}{dt}\tau(t) & = \tfrac{d}{dt}(\sR_{\hh} + 4\varep|Y|^{2}_{\hh}) =  \lap_{\hh}\sR_{\hh} + \sR_{\hh}^{2} - 4\varep\sR_{\hh}|Y|^{2}_{\hh} = \sp(t).
\end{split}
\end{align}
Let $\ga(t) = \imt(Y)\hh$. Using \eqref{drd} yields
\begin{align}
\begin{split}
\tfrac{d}{dt}d\ga(t) &= - d(\sR_{\hh}\ga(t)) = -d\sR_{\hh}\wedge \ga(t) -\sR_{\hh}d\ga(t)\\  
& = 2\varep \fd_{\hh}\star \ga(t)\wedge \ga(t) + (1/2)\sR_{h}\fd_{h}\om_{h} = \left(-2\varep\fd_{\hh}|Y|^{2}_{\hh}  + (1/2)\sR_{h}\fd_{h}\right)\om_{h}.
\end{split}
\end{align}
Hence
\begin{align}
\begin{split}
(\tfrac{d}{dt}\fd_{\hh})\om_{\hh} & = \tfrac{d}{dt}(\fd_{h}\om_{h}) - \fd_{h}\tfrac{d}{dt}\om_{h} = -2\tfrac{d}{dt}d\ga(t) + \fd_{h}\sR_{h}\om_{h} = 4\varep\fd_{\hh}|Y|^{2}_{\hh}\om_{\hh},
\end{split}
\end{align}
showing that $\tfrac{d}{dt}\fd_{\hh} =  4\varep\fd_{\hh}|Y|^{2}_{\hh}$. Differentiating \eqref{sitau} along $\hh$ yields
\begin{align}\label{rhoconstant}
\begin{split}
\tfrac{d}{dt}\rho(t) & = 2|Y|^{2}_{\hh}\left(\varep\fd_{\hh}^{2} + \sp(t) - \tau(t)\sR_{\hh} + 4\varep \sR_{\hh}|Y|^{2}_{\hh}\right) = 0.
\end{split}
\end{align}
By \eqref{rhoconstant}, $\tfrac{d}{dt}\sp = \tfrac{d}{dt}(\tau^{2}) = 2\tau\sp$. Hence, $\sp(t) = \sp(t_{0})\exp\left\{2\int_{t_{0}}^{t}\tau(x)\,dx\right\}$ for any $t_{0} \in I$, from which the claim about the sign of $\sp(t)$ is apparent. The monotonicity of $\tau$ then follows from $\tfrac{d}{dt}\tau = \sp$.
\end{proof}

By Lemma \ref{uniquenesslemma}, if $(\hh, Y)$ solves \eqref{vlike} and $\sp(t)$ is nonzero for some $t$, then $\sp(t)$ is never zero. In this case $\tau(t)$ and $\sp(t)$ solve the equations \eqref{tseq}. Explicit expressions for $\tau(t)$ and $\sp(t)$ are most easily found by observing that $m(t) = |\sp(t)|^{-1/2}$ solves $\ddot{m} = 4\varep\rho m$ with initial conditions $m(t_{0}) = |\sp(t_{0})|^{-1/2}$ and $\dot{m}(t_{0}) = - \tau(t_{0})|\sp(t_{0})|^{-1/2}$. Letting $\la = 1$ or $\j$ as $\varep$ is $1$ or $-1$, and letting $\sp_{0} = \sp(t_{0}) \neq 0$ and $\tau_{0} = \tau(t_{0})$, there result:
\begin{align}\label{sptau}
\begin{split}
\sp(t)& = \begin{cases} 4\sp_{0}\varep\rho \left(2\la\sqrt{\rho}\cosh 2\la \sqrt{\rho}(t - t_{0}) - \tau_{0}\sinh 2\la \sqrt{\rho}(t - t_{0})\right)^{-2} & \,\,\text{if}\,\, \rho \neq 0,\\
\sp_{0}(1 - \tau_{0}(t - t_{0}))^{-2} & \,\,\text{if}\,\, \rho = 0.
\end{cases}\\
\tau(t) &= \begin{cases} 
2\la\sqrt{\rho}\left(\frac{\tau_{0}\cosh 2\la \sqrt{\rho}(t - t_{0}) - 2\la \sqrt{\rho}\sinh2\la \sqrt{\rho}(t - t_{0})}{2\la \sqrt{\rho}\cosh 2\la \sqrt{\rho}(t - t_{0}) - \tau_{0}\sinh 2\la \sqrt{\rho}(t - t_{0})} \right)
& \,\,\text{if}\,\, \rho \neq 0,\\
\tau_{0}\left(1 - \tau_{0}(t - t_{0})\right)^{-1}  & \,\,\text{if}\,\, \rho = 0.
\end{cases}
\end{split}
\end{align}
The expressions in the $\rho = 0$ case are the $\rho \to 0$ limits of the $\rho \neq 0$ expressions. The choice of the numerical value of $t_{0}$ is arbitrary, and can be made so that the expressions assume more convenient forms. For instance, when $\tau_{0} \neq 0$, taking $t_{0} = -\tau_{0}^{-1}$, the expressions in the $\rho = 0$ case become $\tau(t) = -t^{-1}$ and $\sp(t) = t^{-2}$. When $\rho \neq 0$ the expressions in \eqref{sptau} can be simplified considerably by an appropriate choice of $t_{0}$ depending on the values of $\rho$, $\varep$, and $\sp_{0}$. Precisely, by an appropriate choice of $t_{0}$, $\tau(t)$ and $\sp(t)$ can be assumed to have the forms:
\begin{align}\label{taut1}
&\tau(t) = -2\la \sqrt{\rho} \coth 2\la\sqrt{\rho}t,& &\sp(t) = 4\varep \rho \csch^{2}(2\la \sqrt{\rho}t),& &\text{when}\,\, \varep\rho > 0, \sp > 0 \\
\label{taut1b}
&\tau(t) = -2 \sqrt{|\rho|} \cot 2\sqrt{|\rho|}t,& &\sp(t) = 4|\rho| \csc^{2}(2\sqrt{|\rho|}t),&  &\text{when}\,\,  \varep\rho < 0, \sp > 0\\
\label{tautneg}
&\tau(t) = -2\sqrt{\rho} \tanh 2\sqrt{\rho}t,& &\sp(t) = -4\rho \sech^{2}(2\sqrt{\rho}t),&  &\text{when}\,\, \rho > 0, \sp < 0.
\end{align}
In \eqref{tautneg} the parameters $\varep$ and $\la$ are omitted because when $\sp < 0$ they must take the values $\varep = 1 = \la$. In the case $\varep\rho > 0$ and $\sp_{0} > 0$, \eqref{sptau} yields \eqref{taut1} upon letting $q$ be the unique real number such that $\cosh q = \tau_{0}\sp_{0}^{-1/2}$ and $\sinh q = 2\la \sqrt{\rho}\sp_{0}^{-1/2}$ and taking $t_{0} = -q/(2\la\sqrt{\rho})$. In the case $\varep\rho > 0$ and $\sp_{0} < 0$, \eqref{sptau} yields \eqref{tautneg} upon letting $q$ be the unique real number such that $\cosh q = 2\la\sqrt{\rho}|\sp_{0}|^{-1/2}$ and $\sinh q = \tau_{0}|\sp_{0}|^{-1/2}$ and taking $t_{0} = -q/(2\la\sqrt{\rho})$. In the case $\varep\rho < 0$ and $\sp_{0} > 0$, \eqref{sptau} yields \eqref{taut1b} upon letting $q$ be the unique real number such that $\cos q = \tau_{0}\sp_{0}^{-1/2}$ and $\sin q = 2\sqrt{|\rho|}\sp_{0}^{-1/2}$ and taking $t_{0} = -q/(2\sqrt{|\rho|})$.

 Although it is convenient to record both, the expressions \eqref{taut1} and \eqref{taut1b} are actually the same; when $\varep\rho < 0$, \eqref{taut1b} results from \eqref{taut1} using the identity $\sinh(ix) = i\sin(x)$.

\subsection{}\label{rotationconventionsection}
 Let $z = x + \j y = e^{s}e^{ir}$ be the standard coordinate on the Riemann sphere, where the coordinates $s \in \rea$ and $r \in [0,2\pi)$ on the punctured plane $\rea^{2}\setminus\{0\}$ are called \textit{cylindrical} because the metric $|z|^{-2}|dz|^{2} = dr^{2} + ds^{2}$ is the metric of a flat cylinder. For $\be > -1$, a metric $h$ on a surface $M$ is said to have a \textit{conical singularity} of angle $2\pi(\be + 1)$ at a point $p \in M$ if there is an open neighborhood $U \subset M$ of $p$ and a diffeomorphism mapping $U$ into $\com$ such that $p$ is mapped to $0$ and there is a smooth function $f$ such that the pullback to $U$ of the metric $e^{f}|z|^{2\be}|dz|^{2} = e^{f}e^{2(\be + 1)s}(dr^{2} + ds^{2})$ is equal to $h$ on $U \setminus \{p\}$. If the same condition is satisfied but with $\be = -1$, then $h$ is said to have a logarithmic singularity or \textit{cusp} at $p$. On $\sphere$ the metric 
\begin{align}\label{rhospheremetric}
\frac{2\rho|z|^{\sqrt{\rho} - 2}|dz|^{2}}{(1 + |z|^{\sqrt{\rho}})^{2}}=  \frac{\rho(dr^{2} + ds^{2})}{\cosh \sqrt{\rho}s + 1} =  \frac{\rho(dr^{2} + ds^{2})}{2\cosh^{2}(\sqrt{\rho}s/2)},
\end{align}
has scalar curvature $1$ and volume $4\pi\sqrt{\rho}$, with cone points of angle $\pi\sqrt{\rho}$ at the poles. On the two discs in $\sphere$ complementary to the circle $s = 0$ ($|z| = 1$) the metric
\begin{align}\label{rhohyperbolicmetric}
 \frac{2\rho|z|^{\sqrt{\rho} - 2}|dz|^{2}}{(1 - |z|^{\sqrt{\rho}})^{2}} = \frac{\rho(dr^{2} + ds^{2})}{\cosh \sqrt{\rho}s - 1} = \frac{\rho(dr^{2} + ds^{2})}{2\sinh^{2} (\sqrt{\rho}s/2) },
\end{align}
has constant scalar curvature $-1$, with cone points of angle $\pi\sqrt{\rho}$ at the centers of the disks.
The metrics \eqref{rhospheremetric} and \eqref{rhohyperbolicmetric} have in the complementary chart with coordinates $\tilde{z} = -1/z = -e^{-s}e^{\j r}$ the same expressions, with $z$ replaced by $\tilde{z}$.

Suppose $M$ is an orientable surface equipped with a Riemannian metric $h$, a compatible complex structure $J$, and K\"ahler form $\om_{h}$, and let $Y$ be a nontrivial Killing field. Let $\tilde{M}$ be the universal cover of $M$ and denote the pullbacks to $\tilde{M}$ of objects on $M$ in the same way as the objects themselves. 
Let $M^{\ast}$ be the open dense subset of $M$ on which $Y$ is nonvanishing. Let $\tilde{M}^{\ast}$ be the universal cover of $M^{\ast}$. Let $\mom$ be the moment map on $\tilde{M}$ defined by $d\mom = -\star \ga$. Define $u = |Y|^{2}_{h}$ and $w = u^{-1}$. Since $d(w\ga) = dw \wedge \ga - wF = 0$ on $\tilde{M}^{\ast}$, there is an $r \in \cinf(\tilde{M}^{\ast})$ such that $dr = w\ga$. Since $dw \wedge \star \ga = 0$ there holds $d(w \star \ga) = 0$ and so there is an $s \in \cinf(\tilde{M}^{\ast})$ such that $ds = -w\star \ga$. Then $\ga = udr$ and $-\star \ga = d\mom = uds$. Also, $\delbar(s + \j r) = 0$, so $s + \j r$ and $z = x + \j y = e^{s}e^{\j r}$ are holomorphic functions on $\tilde{M}^{\ast}$. By construction $h =  udr^{2} + w d\mom^{2} = u(dr^{2} + ds^{2})$, and the associated K\"ahler form is $\om_{h} = d\mom \wedge dr = uds \wedge dr$. Let $\phi_{a}$ and $\psi_{b}$ be local flows of $Y$ and $-JY$. Since $\lie_{Y}J = 0$, there holds $[Y, JY] = 0$, and so these flows commute. For any $p \in M^{\ast}$, $\tfrac{d}{da}(r \circ \phi_{a}(p)) = dr(Y_{\phi_{a}(p)}) = 1$ and $\tfrac{d}{db}(s\circ \psi_{b}(p)) = -ds(JY) = 1$, so $r\circ \phi_{a}(p) - r(p) = a$ and $s\circ \psi_{b}(p) - s(p) = b$. Normalizing $r$ and $s$ so that $r(p) = 0 = s(p)$, this means that for every $p$ there is an open neighborhood on which $r$ and $s$ are local coordinates such that the origin corresponds to $p$, $Y = \pr_{r}$, and $\pr_{s} = -JY$. Such coordinates will be called \textit{cylindrical coordinates centered at $p$}. With respect to the coordinate $z = x + \j y = e^{s}e^{\j r}$, $J$ is the standard complex structure on $\com$, and $\pr_{r}= x\pr_{y} - y\pr_{x}$ and $\pr_{s} = x\pr_{x} + y\pr_{y}$. From the fact that $Y = \pr_{r}$ is Killing it follows that the partial derivative $u_{r}$ is zero and $u$ is locally constant on the level sets of $\mom$. 

The metric $h^{\ast} = wh$ on $M^{\ast}$ is flat. The scalar curvature of a metric $g = a h^{\ast}$ is $\sR_{g} = -a^{-1}\lap_{h^{\ast}}\log a$, where $\lap_{h^{\ast}}$ means the Laplacian of the flat metric $h^{\ast}$. Let $\pr$ be the Levi-Civita connection of $h^{\ast}$, and observe that $\{Y, -JY\}$ is a $\pr$-parallel frame. From $-2d\log w(JY) = \fd_{h}$ and \eqref{dfd} it follows that $\sR_{h} = -u^{-1}(\pr d\log u)(JY, JY) = w(\pr d\log w)(JY, JY) = \sR_{h}$. Use subscripts to indicate covariant derivatives with respect to $\pr$. In coordinates, $\sR_{h} = -u^{-1}(\log u)_{ss}$, and $F = -d\log u \wedge \ga = -(\log u)_{s}\om_{h}$, so $\fd_{h} = -2(\log u)_{s}$. 
%%%%%%save
% $\sR_{h} = -u^{-1}(\log u)_{ss} = -u_{\mom\mom}$$\fd_{h} = -2(\log u)_{s} = -2u_{\mom}$.

\subsection{}\label{sol2section}
Suppose $(h, Y)$ solves the real vortex equations \eqref{vlike} with parameters $\tau$ and $\varep$. Work on the complement $M^{\ast}$. By \eqref{dyh}, $\fd_{h} = 2d\log u(JY) = -2d\log w(JY)$. In \eqref{sitau} this yields 
\begin{align}\label{sitau2}
-4\varep \rho = \sp - \tau^{2} = -4\varep(d\log u)(JY)^{2} - 8\varep\tau u + 16u^{2}.
\end{align}
In terms of $w = u^{-1}$, \eqref{sitau2} becomes
\begin{align}\label{weq2}
dw(JY)^{2} - 4\varep + 2\tau w -\rho w^{2} = 0.
\end{align}
Differentiating \eqref{weq} along $JY$ shows that
\begin{align}\label{wss2}
0 = 2dw(JY)\left((\pr dw)(JY, JY) - \rho w + \tau \right) = -w\fd_{h}\left((\pr dw)(JY, JY) - \rho w + \tau \right).
\end{align}
By \eqref{fd1} of Lemma \ref{squaresconstantlemma}, if $Y$ is not parallel then the zero set of $\fd_{h}$ in $M^{\ast}$ is a union of smoothly immersed curves, so by continuity \eqref{wss2} implies that
\begin{align}\label{wss3}
(\pr dw)(JY, JY) = \rho w - \tau,
\end{align}
on $M^{\ast}$. Let $r$ and $s$ be local cylindrical coordinates centered on $p \in M^{\ast}$. Equations \eqref{weq2} and \eqref{wss3} become
\begin{align}\label{weq}
0 & = w_{s}^{2} - 4\varep + 2\tau w -\rho w^{2},\\
\label{wss}
w_{ss} & = \rho w - \tau.
\end{align}
While the general solution of \eqref{wss} has two free parameters, \eqref{weq} imposes on them a further relation, leaving a single degree of freedom. 
A consequence of \eqref{wss} that will be needed in the proof of Theorem \ref{flowtheorem} is that if $(h, Y)$ solves \eqref{vlike} then in a neighborhood of any point of $M^{\ast}$ the metric $h$ is real analytic. Consequently its curvature is also real analytic. In particular, if $h$ has constant curvature on an open subset of $M^{\ast}$ then it is flat on all of $M^{\ast}$.

Solutions $(h, Y)$ to \eqref{vlike} can be constructed by reversing the preceeding. Given $\si$ and $\tau$, one solves \eqref{weq} for $w$, defines $u = w^{-1}$, and defines $h = u(dr^{2} + ds^{2})$ and $Y = \pr_{r}$. Whether the resulting solution extends when $s \to \pm \infty$ has to be analyzed on a case by case basis.

\subsection{}
Suppose $\hh$ is a one-parameter family of metrics and $Y$ is a Killing field for each $\hh$. Write $\hh = u(t, p)h^{\ast} = w(t, p)^{-1}h^{\ast}$ where $h^{\ast} = |Y|^{-2}_{h}h$ and $p \in M^{\ast}$. That $\hh$ moreover evolve by the Ricci flow $\frac{d}{dt}h = -\sR_{h}h$ is equivalent to the equation
\begin{align}\label{lde}
w_{t} =  w(\pr dw)(JY, JY) - dw(JY)^{2}.
\end{align}
In local cylindrical coordinates $r$ and $s$, \eqref{lde} becomes
\begin{align}\label{lde2}
w_{t} = ww_{ss} - w_{s}^{2}.
\end{align}
Equation \eqref{lde2} is equivalent to $u$ solving the \textit{logarithmic diffusion equation} $u_{t} = (\log u)_{ss}$. 

Suppose that for each $t$ in some interval $I$ the metrics $\hh$ and the fixed vector field $Y$ together solve \eqref{vlike} with parameters $\varep$ and $\tau(t)$ on the oriented surface $M$. 
Note that the induced conformal structure does not depend on $t$. Combining \eqref{weq}, \eqref{wss}, and \eqref{lde2} shows that for $\hh$ also to be a solution to the Ricci flow necessitates
\begin{align}\label{wt}
w_{t} = w(\rho w - \tau) -  4\varep + 2\tau w -\rho w^{2} = \tau w - 4\varep,
\end{align}
in which $\tau$ is a function of $t$. 

\subsection{}
Theorem \ref{flowtheorem} shows that when $(h, Y)$ solves the real vortex equations \eqref{vlike} on $M$ then locally on $M^{\ast}$ there is a unique Ricci flow $\hh$ through $h$ such that $(\hh, Y)$ solves \eqref{vlike}. The uniqueness follows from the proof of Lemma \ref{uniquenesslemma}. The assumption that $(\hh, Y)$ solves both the Ricci flow and the real vortex equations yields the equations \eqref{wss} and \eqref{wt}. Theorem \ref{flowtheorem} shows that the solution of \eqref{wt} obtained with the initial data determined by a solution of \eqref{vlike} necessarily satisfies \eqref{wss}. This shows that given $(h, Y)$ solving \eqref{vlike} then locally there is a unique Ricci flow $\hh$ such that $(\hh, Y)$ solves \eqref{vlike}. A comparably general global statement is not feasible without assuming more, e.g. that the surface be compact. On the other hand, more detailed information  about the solutions to the real vortex equations than that provided by Theorem \ref{flowtheorem} can be obtained on a case by case basis by solving the equations \eqref{wss} and \eqref{wt} explicitly, and the latter part of this section is devoted to describing the resulting metrics in detail.

\begin{theorem}\label{flowtheorem}
Let $(M, J)$ be a Riemann surface and suppose that there are a metric $h$ representing the given conformal structure and a complete vector field $Y$ that together solve the real vortex equations \eqref{vlike} for a given constant $\tau_{0} \in \rea$ and given parameter $\varep \in\{\pm 1\}$. Define $\rho\in \rea$ by $4\varep \rho = \tau_{0}^{2} - \sp_{0}$, where $\sp_{0}$ is the constant \eqref{spdefined} determined by $(h, Y)$. Let $\tau(t)$ be the unique solution of $\tau_{t} = \tau^{2} - 4\varep \rho$ satisfying $\tau(t_{0}) = \tau_{0}$. For each $p \in M$ there are a relatively compact open neighborhood $U_{p} \subset M$ containing $p$, an interval $I \subset \rea$ containing $t_{0}$ and contained in the maximal domain of definition of $\tau(t)$, and a unique smooth Ricci flow $\hh$ defined for $(t, q) \in I \times U_{p}$ such that $h(t_{0})$ equals the restriction of $h$ to $U_{p}$ and such that for all $t \in I$, $(\hh, Y)$ solves \eqref{vlike} on $U_{p}$ with constant $\tau(t)$ and parameter $\varep$. For the spatial constant $\sp(t)$ associated to $(\hh, Y)$ as in \eqref{spdefined}, the expression $\tau(t)^{2} - \sp(t)$ is constant in $t$ for $t \in I$, equal to $4\varep \rho$.
\end{theorem}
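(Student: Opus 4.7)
The plan is to reduce the problem on a neighborhood of $p$ to the ODE system isolated in section \ref{sol2section}: the compatibility equations \eqref{wss} and \eqref{weq}, together with the Ricci flow equation \eqref{lde2}, whose common consequence is \eqref{wt}, in which $w = |Y|^{-2}_{\hh}$ evolves pointwise by the first order linear ODE $w_t = \tau(t) w - 4\varep$. First, solve $\tau_t = \tau^2 - 4\varep\rho$ with $\tau(t_0) = \tau_0$ to obtain $\tau(t)$ on its maximal interval $I$. Choose a relatively compact open neighborhood $U_p$ of $p$; in the variable $u = w^{-1} = |Y|^2_{\hh}$, which stays bounded on $U_p$, the evolution \eqref{wt} becomes the Riccati equation $u_t = -\tau u + 4\varep u^2$ with closed form solution
\[
u(t, q) \;=\; \frac{e^{-T(t)}\, u_0(q)}{1 - 4\varep\, u_0(q)\, A(t)},
\qquad T(t) = \int_{t_0}^{t} \tau(x)\,dx,
\qquad A(t) = \int_{t_0}^{t} e^{-T(x)}\,dx,
\]
jointly smooth in $(t, q)$ on $I' \times U_p$ for any subinterval $I' \subset I$ on which the denominator stays bounded away from zero. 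The conformal factor $u(t, \cdot)/u_0$ of $\hh$ relative to $h$ therefore extends smoothly across the zero set of $Y$ (where $u_0 = 0$), so $\hh$ is defined and smooth on all of $U_p$.

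The core step is verifying that this $w$ continues to satisfy \eqref{wss} and \eqref{weq} throughout $I'$; both hold at $t_0$ precisely because $(h, Y)$ solves \eqref{vlike}, so the question is whether the pointwise evolution propagates them. I would introduce the two defects $P := w_{ss} - \rho w + \tau$ and $Q := w_s^2 - 4\varep + 2\tau w - \rho w^2$. Since $w_t = \tau(t) w - 4\varep$ contains no spatial derivative of $w$, $(w_{ss})_t = \tau w_{ss}$ and $(w_s)_t = \tau w_s$; combined with $\tau_t = \tau^2 - 4\varep \rho$, a direct computation yields the linear homogeneous ODEs $P_t = \tau P$ and $Q_t = 2 \tau Q$, forcing $P \equiv 0 \equiv Q$ on $I'$. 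Once \eqref{wss} and \eqref{weq} are in force, the algebraic identity
\[
w w_{ss} - w_s^2 \;=\; w(\rho w - \tau) - (\rho w^2 - 2\tau w + 4\varep) \;=\; \tau w - 4\varep \;=\; w_t
\]
recovers \eqref{lde2}, which on the cylindrical chart $U_p \cap M^*$ is the Ricci flow equation for $\hh = w^{-1}(dr^2 + ds^2)$; by smoothness of $u$ it extends to all of $U_p$. The cylindrical formulas of section \ref{rotationconventionsection} then read off $\sR_{\hh} + 4\varep|Y|^2_{\hh} = \tau(t)$, so $(\hh, Y)$ solves \eqref{vlike} with the prescribed vortex parameter, while the identity $\tau(t)^2 - \sp(t) = 4\varep\rho$ is a first integral of \eqref{tseq}. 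Uniqueness follows because Lemma \ref{uniquenesslemma} forces any admissible flow's $\tau$ to solve its ODE, and then \eqref{wss}, \eqref{weq}, and \eqref{lde2} force its $w$ to solve \eqref{wt}, which has a unique solution with prescribed initial data.

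The main obstacle I anticipate is managing the local geometry across the (at most isolated) zeros of $Y$: the cylindrical coordinates $(r, s)$ degenerate there, $w$ blows up, and the verification above, carried out in those coordinates, applies only on $U_p \cap M^*$. The explicit Riccati solution for $u$ resolves this, since $u$ is manifestly smooth on $I' \times U_p$ and provides a smooth extension of $\hh$ past the zero locus. The Ricci flow equation and the vortex identity, proved on the open dense set $U_p \cap M^*$, then extend to all of $U_p$ by continuity. A secondary point — that $\hh$ remains independent of the cylindrical coordinate $r$ so that $Y = \partial_r$ is still Killing — is automatic, since $u_0$ is $r$-independent (as $Y$ is $h$-Killing) and \eqref{wt} contains no spatial derivatives.
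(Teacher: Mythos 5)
Your proposal is correct and follows essentially the same route as the paper's proof: both reduce the problem to the pointwise linear ODE \eqref{wt} (the paper's \eqref{fos2}) for $w=|Y|^{-2}_{\hh}$, observe that the resulting conformal factor is smooth across the zero set of $Y$ (your explicit Riccati solution for $u$ is exactly the paper's $L(t,p)^{-1}$ from \eqref{lsol}), propagate the constraint \eqref{weq} by showing its defect satisfies a homogeneous linear ODE, recover \eqref{lde2} algebraically from \eqref{weq} and \eqref{wss}, and obtain uniqueness from the uniqueness of solutions of \eqref{fos2}. The one genuine (minor) difference is how \eqref{wss} is propagated: the paper differentiates the propagated identity $Q\equiv 0$ along $JY$ and invokes \eqref{fd1} of Lemma \ref{squaresconstantlemma} to divide by $dW(JY)$ off a thin set, whereas you propagate the second-order defect $P=w_{ss}-\rho w+\tau$ directly via $P_{t}=\tau P$, which is slightly cleaner at later times (though $P(t_{0})=0$ still rests on the density argument of section \ref{sol2section}, and hence on $Y$ not being parallel, a degenerate case you should dispatch explicitly as the paper does).
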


\begin{proof}
The completeness of $Y$ is assumed so that none of what follows depends on the domain of definition of the flow of $Y$, and will not be mentioned again in the proof. If $Y$ is parallel then $h$ must be flat and there is nothing to show, so it can be assumed that $Y$ is not parallel. 
Let $M^{\ast}$ be the complement of the discrete set of zeros of $Y$. Define functions $u \in \cinf(M)$ and $w\in \cinf(M^{\ast})$ by $u = |Y|^{2}_{h}= w^{-1}$. Let $\tau(t)$ be the unique solution of $\tau_{t}  = \tau^{2} - 4\varep \rho$ satisfying $\tau(t_{0}) = \tau_{0}$. It is defined on some maximal connected open subset $\hat{I} \subset \rea$. Let $\sp(t)$ be the unique solution of $\sp_{t} = 2\tau\sp$ satisfying $\sp(t_{0}) = \sp_{0}$. Then $\tau(t)$ and $\sp(t)$ are as in \eqref{sptau} and $\sp(t) = \sp_{0}\exp\left\{2\int_{t_{0}}^{t}\tau(x)\,dx\right\}$. 
For $p \in M$ the unique solution $L(t, p)$ of the initial value problem
\begin{align}\label{fos}
\begin{split}
L_{t} & = \tau(t)L - 4\varep u(p), \qquad \qquad L(t_{0}, p) = 1,
\end{split}
\end{align}
is given explicitly by 
\begin{align}\label{lsol}
L(t, p) = \begin{cases}
\left(1 - \tfrac{\tau_{0}}{\rho}u(p)\right)\left(\tfrac{\sp(t)}{\sp_{0}}\right)^{1/2} + \tfrac{\tau(t)}{\rho}u(p) & \,\,\text{if}\,\, \rho \neq 0  \,\, \text{and}\,\, \sp_{0} \neq 0,\\
\tfrac{\tau(t)}{\rho}u(p) & \,\,\text{if}\,\, \rho \neq 0  \,\, \text{and}\,\, \sp_{0} = 0,\\
\left(1 - \tfrac{4\varep}{\tau_{0}}u(p)\right)\left(\tfrac{\sp(t)}{\sp_{0}}\right)^{1/2} + \tfrac{4\varep}{\tau_{0}}u(p)& \,\,\text{if}\,\, \rho =0 \,\, \text{and}\,\, \tau_{0} \neq 0,\\
1 + 4\varep(t_{0} - t)u(p) & \,\,\text{if}\,\, \rho =0 \,\, \text{and}\,\, \tau_{0} = 0.
\end{cases}
\end{align} 
Define $\hh_{p} = L(t, p)^{-1}h_{p}$. For each $p \in M$ there is some maximal connected open subset $\hat{I}_{p} \subset \rea$ containing $t_{0}$ and such that $L(t, p)$ is defined and positive for all $t \in \hat{I}_{p}$. Because the solution of \eqref{fos} depends smoothly on the initial data, there is a relatively compact neighborhood $U_{p} \subset M$ of $p$ and a maximal connected open interval $I$ containing $t_{0}$ such that the metric $\hh$ is defined on $U_{p}$ for all $t \in I$. 

Differentiating $dL(Y)$ in $t$ gives $\tfrac{d}{dt}dL(Y) = \tau dL(Y)$. Since $dL(Y)_{t = t_{0}} = 0$, this implies $dL(Y) = 0$ at $p$ for all $t$ for which $L(t, p)$ is defined. Consequently, the vector field $Y$ is Killing for $\hh$ where $\hh$ is defined. The function $W(t, p) = L(t, p)w(p)$ defined for $p \in M^{\ast}$ is smooth and solves the initial value problem
\begin{align}\label{fos2}
\begin{split}
W_{t} & = \tau(t)W - 4\varep, \qquad \qquad W(t_{0}, p) = w(p).
\end{split}
\end{align}
The function $V(t, p) = dW(JY)^{2} - \rho W^{2} + 2\tau W - 4\varep$ solves
\begin{align}\label{weqt}
\begin{split}
V_{t} &= 2dW(JY)dW_{t}(JY) - 2\rho WW_{t} + 2\tau W_{t} + 2(\tau^{2} - 4\varep\rho)W,\\
& = 2\tau dW(JY)^{2} + 2(\tau - \rho W)(\tau W - 4\varep) + 2\tau^{2}W - 8\varep \rho W = 2\tau V,
\end{split}
\end{align}
for $t \in I_{p}$. By \eqref{weq}, $V(t_{0}, p) = 0$, and so \eqref{weqt} implies $V(t, p) = 0$ for all $p \in M^{\ast}$. Let $\pr$ be the Levi-Civita connection of the flat metric $h^{\ast}$ on $M^{\ast}$. 
Differentiating $0 = dW(JY)^{2} - \rho W^{2} + 2\tau W - 4\varep$ along $JY$ yields
\begin{align}\label{weqt2}
\begin{split}
0 & = 2dW(JY)\left((\pr dW)(JY, JY) - \rho W + \tau\right).
\end{split}
\end{align}
By \eqref{lsol}, $dW$ is a multiple of $dw$, and so, by \eqref{dyh}, $dW(JY)$ is a nonzero multiple of $w\fd_{h}$. Because $Y$ is not $h$-parallel, by \eqref{fd1} of Lemma \ref{squaresconstantlemma}, the zero set of $\fd_{h}$ in the complement of its zero set is a union of immersed curves, so $dW(JY)$ is nonzero off these curves, and \eqref{weqt2} implies 
\begin{align}\label{weqt3}
\begin{split}
(\pr dW)(JY, JY) = \rho W - \tau,
\end{split}
\end{align}
which holds on all of $M^{\ast}$. Then
\begin{align}
\begin{split}
\sR_{\hh} + 4\varep|Y|^{2}_{\hh} 
& = -W\lap_{h^{\ast}}\log W + 4\varep W^{-1} = W(\pr d\log W)(JY, JY) + 4\varep W^{-1} \\
& = (\pr d W)(JY, JY) - (d\log W)(JY)^{2} + 4\varep W^{-1}\\
& = \rho W - \tau  - W^{-1}(\rho W^{2} - 2\tau W) = \tau(t),
\end{split}
\end{align}
so that $(\hh, Y)$ solves \eqref{vlike} on $M^{\ast}$ with parameters $\varep$ and $\tau(t)$. Hence $\tau(t) = \sR_{\hh} + 4\varep|Y|^{2}_{\hh}$ on $M^{\ast}$, and by continuity of $\sR_{\hh}$ and $|Y|^{2}_{\hh}$, the same identity holds at $q \in M$, showing that, when defined, $(\hh, Y)$ solves \eqref{vlike} with parameters $\tau(t)$ and $\varep$.

From \eqref{weqt3} there results
\begin{align}
\begin{split}
W_{t} &- W(\pr dW)(JY, JY) + dW(JY)^{2} = \tau W - 4\varep - W(\rho W - \tau) + dW(JY)^{2} = 0.
\end{split}
\end{align}
By \eqref{lde2} this shows that the metric $\hh$ solves the Ricci flow on $M^{\ast}$. Now suppose $q \in M$ is a zero of $Y$. Since $v(t) = (\sp(t)/\sp_{0})^{1/2}$ solves $v_{t} = \tau(t)v$, it solves \eqref{lsol} for $p = q$, and so by the uniqueness of solutions to \eqref{lsol} there holds $L(t, q) = (\sp(t)/\sp_{0})^{1/2}$. Hence $\tfrac{d}{dt}\hh_{q} = \tfrac{d}{dt}( (\sp(t)/\sp_{0})^{-1/2}h_{q}) = -\tau(t)\hh_{q}$.  Since $\sR_{\hh}(q) = \tau(t)$, this shows that $\hh$ solves the Ricci flow at $q$. 
By Lemma \ref{uniquenesslemma}, the parameter $\rho(t)$ defined by $4\varep \rho(t) = \tau(t)^{2} - \sp(t)$ is constant, equal to $\rho$.

The preceeding proves that for every $p \in M$ there is a relatively compact open neighborhood $U\subset M$ of $p$ and an open connected neighborhood $I \subset \rea$ of $t_{0}$ such that for all $t \in I$ the metric $\hh$ solves the Ricci flow and the pair $(\hh, Y)$ solves \eqref{vlike} with parameters $\tau(t)$ and $\varep$. Suppose $g(t)$ is another Ricci flow such that $(g(t), Y)$ solves \eqref{vlike} on $U$. Since $g(t)$ remains within the conformal class of the initial metric $h$, as in the proof of Lemma \ref{uniquenesslemma}, for each $t$ the corresponding function $w(t, s)$ on $U\cap M^{\ast}$ solves \eqref{wt}, or, equivalently, \eqref{fos2}. Since the solution of \eqref{fos2} is unique, this shows that $g(t) = \hh$ on $U \cap M^{\ast}$, and so on $U$, by continuity.
\end{proof}

Although without imposing some conditions on the geometry of $M$ it is complicated to say anything more precise than Theorem \ref{flowtheorem}, the Ricci flow constructed in Theorem \ref{flowtheorem} is in some sense defined on all of $M^{\ast}$. The qualification is that the maximal domain of definition for $t$ can in principle depend on $p \in M^{\ast}$. 

\begin{corollary}\label{flowcorollary}
If $(h, Y)$ solves \eqref{vlike} on the compact orientable surface $M$, then there is a unique Ricci flow $\hh$ defined for all $t$ in some open interval $I$ containing $t_{0}$ such that $h(t_{0}) = h$, and the pair $(\hh, Y)$ solves \eqref{vlike} for all $t \in I$. 
\end{corollary}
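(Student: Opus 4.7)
The plan is to globalize the local construction from the proof of Theorem \ref{flowtheorem} by exploiting compactness of $M$. The crucial observation is that the formulas \eqref{lsol} defining $L(t,p)$ depend on $p$ only through the smooth function $u(p) = |Y|_h^2(p)$, and the scalar quantities $\tau(t)$ and $\sp(t)$ are globally defined functions of $t$ alone, governed by the ODE system \eqref{tseq}. So the construction can be carried out directly on all of $M$.

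First I would introduce $\tau(t)$ as the maximal solution of $\tau_t = \tau^2 - 4\varep\rho$ with $\tau(t_0) = \tau_0$, defined on some maximal open interval $\hat{I}\subset\rea$ containing $t_0$, and then define $L: \hat I \times M \to \rea$ by the expressions \eqref{lsol}. Since $L(t_0,p) = 1$ for all $p$, and $u$ is bounded on the compact surface $M$, the set $I = \{t \in \hat I : L(t,p) > 0 \text{ for all } p \in M\}$ is an open interval containing $t_0$. Set $\hh = L(t,\cdot)^{-1}h$ on $I\times M$; this is a smooth one-parameter family of Riemannian metrics conformal to $h$.

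Next I would verify that $\hh$ is a Ricci flow and that $(\hh,Y)$ solves \eqref{vlike} with parameter $\tau(t)$. All the pointwise arguments made in the proof of Theorem \ref{flowtheorem}, namely: that $dL(Y) = 0$ so $Y$ remains Killing; that the function $W = Lw$ satisfies $W_t = \tau W - 4\varep$; that the quantity $V = dW(JY)^2 - \rho W^2 + 2\tau W - 4\varep$ satisfies $V_t = 2\tau V$ and vanishes at $t_0$, forcing $V\equiv 0$; and that consequently \eqref{weqt3} holds on $M^{\ast}$ so that $\hh$ satisfies the Ricci flow equation on $M^{\ast}$ and $(\hh, Y)$ solves \eqref{vlike} with the prescribed $\tau(t)$ and $\varep$, are all purely local and apply unchanged in our global setting. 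At any zero $q$ of $Y$ one has $L(t,q) = (\sp(t)/\sp_{0})^{1/2}$, and the direct computation at the end of the proof of Theorem \ref{flowtheorem} shows the Ricci flow equation holds at $q$ as well; by continuity, the identity $\tau(t) = \sR_{\hh} + 4\varep|Y|_{\hh}^2$ then extends from $M^{\ast}$ to $M$.

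For uniqueness, suppose $g(t)$ is another Ricci flow on some subinterval $J\subset I$ with $g(t_0) = h$ and $(g(t), Y)$ solving \eqref{vlike}. Applying the uniqueness statement of Theorem \ref{flowtheorem} on a relatively compact neighborhood $U_p$ of each $p\in M$ shows that $g(t) = \hh$ on $U_p$ for $t$ in a neighborhood of $t_0$; covering $M$ by finitely many such neighborhoods and using a standard openness-closedness argument in $t$ (the set of $t\in J$ where $g(t)=\hh$ is open, closed, and nonempty) yields $g(t) = \hh$ throughout $J$. The only mild obstacle is to check that the compactness hypothesis genuinely reduces the uniform existence of $I$ to a boundedness statement for $u$; this is immediate from the explicit form of \eqref{lsol} and the continuity of $\tau(t)$ and $\sp(t)$ in $t$.
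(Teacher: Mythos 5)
Your proof is correct and rests on the same foundation as the paper's, namely Theorem \ref{flowtheorem} plus compactness, but it globalizes differently. The paper's proof is shorter: it covers $M$ by finitely many of the neighborhoods $U_{p}$ from Theorem \ref{flowtheorem}, patches the resulting local flows (which agree on overlaps by the local uniqueness), and then identifies the patched flow with \emph{the} Ricci flow through $h$ by appealing to the well-known short-time existence and uniqueness of the Ricci flow on a compact surface. You instead observe that the formula \eqref{lsol} for $L(t,p)$ is already globally defined, construct $\hh = L^{-1}h$ directly on a uniform interval obtained from the boundedness of $u$ on the compact $M$, and rerun the local verifications; your uniqueness comes from the local uniqueness of Theorem \ref{flowtheorem} together with an open--closed argument in $t$. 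What your route buys is self-containedness: you never invoke the general uniqueness theory for the Ricci flow, only the explicit ODEs. What it costs is a slightly weaker uniqueness conclusion: you show $\hh$ is the unique Ricci flow through $h$ \emph{among those for which $(\hh,Y)$ solves \eqref{vlike}}, whereas the paper's appeal to standard Ricci flow uniqueness on compact surfaces shows that the constructed flow is the unique Ricci flow through $h$ outright, so that the vortex property is automatically preserved by \emph{the} Ricci flow. Two cosmetic points: you should take $I$ to be the connected component containing $t_{0}$ of $\{t\in\hat I: \min_{M}L(t,\cdot)>0\}$ to guarantee it is an interval, and in the openness step of your uniqueness argument you should note that Theorem \ref{flowtheorem} applies with any time in $J$ playing the role of $t_{0}$.
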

\begin{proof}
The short time existence and uniqueness of the Ricci flow under the stated conditions is well known. By Theorem \ref{flowtheorem}, $M$ can be covered by relatively compact open neighborhoods on each of which there is a unique Ricci flow through $h$ forming with $Y$ a solution to \eqref{vlike}. Passing to a finite subcover, the resulting solutions patch together to give a globally defined Ricci flow defined on some interval and which must therefore be the unique Ricci flow through $h$. 
\end{proof}

In the rest of the paper there are analyzed the possible solutions of \eqref{wss} and the resulting Ricci flows. Let $\la$ be $1$ or $\j$ as $\varep$ is $1$ or $-1$, so that $\varep = \la^{2}$. The solutions are constructed on a case by case basis, depending on the values of the parameters $\sp$, $\rho$, and $\la$. The explicit solutions yield somewhat more precise information than what is given by Theorem \ref{flowtheorem}, in particular in relation to what happens at the zeros of $Y$. Additionally, there will be constructed solutions $(\hh, Y)$ of \eqref{vlike} in which the Ricci flow $\hh$ has conical singularities at the zeros of $Y$.
 
\subsubsection{}
Let $(M, J)$ be one of the surfaces in Lemma \ref{twozerolemma} with its standard conformal structure and let $Y$ be a fixed vector field generating an action on $M$ by biholomorphisms. On the preimage of $M^{\ast}$ in the universal cover of $M$, it is always possible to choose global coordinates $z = e^{s+\j r}$ as in section \ref{rotationconventionsection} with respect to which $Y = \pr_{r}$ and the given conformal structure is represented by the flat metric $h^{\ast} = dr^{2} + ds^{2}$. Note that $s$ is determined up to translation, and that changing the sign of $s$ corresponds to replacing $Y$ by $-Y$ (equivalently, replacing $J$ by $-J$). Suppose that $\hh$ is a Ricci flow, defined for each $p \in M^{\ast}$ on some definite interval $I_{p}$, such that $(\hh, Y)$ solves \eqref{vlike} for some $\varep$, $\rho$, and $\sp(t)$, where $\sp(t)$ (and $\tau(t)$) are as in \eqref{sptau}. Then $\hh = u(t, p)h^{\ast}$ where $u(t, p) = |Y|_{\hh}^{2}$, and $w(t, p) = u(t, p)^{-1}$ solves \eqref{weq}, \eqref{wss}, \eqref{lde2}, and \eqref{wt}. 

Since $w$ solves $w_{ss} = \rho w - \tau(t)$, it can be written as the sum of a particular solution and a solution of the homogeneous equation $f_{ss} = \rho f$. The particular solution can be taken to be $\tau(t)/\rho$ if $\rho \neq 0$, where $\tau$ is as in \eqref{taut1}-\eqref{tautneg}, and $-2^{-1}\tau(t)s^{2}$ if $\rho = 0$. Since $w(t, p) = f(t) + V(t, p)$ must solve \eqref{wt}, there holds $V_{t} = \tau V$. Either $\sp(t)$ is identically zero or it is never zero. In the second case, $V(t, p) = |\sp(t)|^{1/2}v(p)$ for some smooth function $v(p)$ such that $dv(Y) = 0$. Then, since $w(t, p) = f(t) + |\sp(t)|^{1/2}v(p)$ must solve $w_{s}^{2} - 4\varep + 2\tau(t)w - \rho w^{2} = 0$, $v$ solves $v_{s}^{2} - \rho v^{2} = -\sign(\sp)/\rho$ and so also $v_{ss} = \rho v$. In the following sections the metrics resulting from the various possible choices of $f$ and $v$, their domains of definition, and their geometric properties are obtained by specializing the formulas just obtained.

\subsubsection{}\label{scalingsection}
Since $e^{-c}h(e^{c}t)$ solves the Ricci flow if $h(t)$ does, if $\hh$ is a Ricci flow such that $(\hh, Y)$ solves \eqref{vlike}, then $(e^{-c}h(e^{c}t), e^{c}Y)$ also solves \eqref{vlike} for any $c \in \rea$. Thus the one-parameter families of solutions of \eqref{vlike} originating in scaling equivalent solutions of \eqref{vlike} are equivalent modulo scaling and an internal scaling reparameterization of the Ricci flow. 
However, in constructing solutions to \eqref{vlike}, some care is necessary when considering rescalings. Replacing $(h, Y)$ by $(\bar{h}, \bar{Y}) = e^{c/2}\cdot(h, Y) = (e^{c}h, e^{-c}Y)$ replaces $\tau$ and $\sp$ by $e^{-c}\tau$ and $e^{-2c}\sp$, so replaces $\rho$ by $e^{-2c}\rho$. The coordinates $r$ and $s$ are determined by the flows of $JY$ and $Y$, and so upon rescaling are replaced by the parameters $\bar{r} = e^{c}r$ and $\bar{s} = e^{c}s$ corresponding to $e^{-c}JY$ and $e^{-c}Y$. It is straightforward to check that if $\bar{w}(\bar{s})$ is the function obtained from $(\bar{h}, \bar{Y})$ as $w$ was obtained from $(h, Y)$, then $\bar{w}(s) = e^{c}w(e^{-c}s)$, so that $\bar{w}$ solves \eqref{weq} with $\bar{\tau}$ and $\bar{\sp}$ in place of $\tau$ and $\sp$ if and only if $w$ solves \eqref{weq}. In this sense, by rescaling $(h, Y)$ the parameter $\rho$ can be normalized to take a given value, e.g. $0$, $4$, or $-4$. Precisely, if $(h, Y)$ is given, it determines a $\rho$, and there is a scaling equivalent pair $(\bar{h}, \bar{Y}) = e^{c/2}\cdot(h, Y)$ determining $e^{c}\rho$. However, such a rescaling presupposes that the scaling equivalence class of $(h, Y)$ is known a priori. If, instead, \eqref{weq} is to be solved in order to construct $(h, Y)$ by inverting the procedure used to derive \eqref{weq}, then the particular value of $\rho$ may matter because the parameter $s$ has implicitly been fixed up to translations. For instance, if the metric resulting from the solution of \eqref{weq} is to be extended to some larger manifold there has to be analyzed whether a rescaling of $\rho$ can be achieved via a geometric or scaling equivalence of the resulting structure on this larger manifold. This need not be the case. There are scaling equivalent solutions to \eqref{vlike} on the punctured disk or punctured sphere which extend to the disk or the sphere with conical singularities at the punctures, but for which the extended solutions are no longer scaling equivalent; see the penultimate paragraph of section \ref{cigarsection} for an example.

\subsection{The soliton case: \texorpdfstring{$\sp = 0$}{sigma = 0} }\label{solitoncasesection}
The first case considered is $\sp = 0$. By Lemma \ref{ricsollemma}, the solutions of \eqref{vlike} with $\varep = -1$ have $h$ flat and $Y$ parallel. For this reason it will be assumed that $\varep = 1$; by Lemma \ref{ricsollemma}, the solutions of \eqref{vlike} with $\varep = 1$ are steady gradient Ricci solitons. In this case, since $\tau^{2} = 4\varep\rho = 4\rho$, either $\tau = 0 = \rho$, or $\rho > 0$ and $\tau = \pm 2\sqrt{\rho}$. 

\subsubsection{Case \texorpdfstring{$\sp = \tau = \rho = 0$}{sigma = tau = rho = 0}}
Consider the case $\tau = 0 = \rho$. By \eqref{weq}, $w_{s}^{2} = 4$, and so there is a function $b(t)$ such that $w = \pm 2s + b$. By \eqref{wt} there holds $-4 = w_{t} = b_{t}$, so, after a translation in $s$, $w$ may be supposed to have the form $w_{\pm} = \pm 2s - 4t$. The metrics 
\begin{align}\label{allzero}
\hhpm = \tfrac{dr^{2} + ds^{2}}{\pm 2s - 4t}  =  \pm \tfrac{|dz|^{2}}{2|z|^{2}(\log\left(e^{\mp 2t}|z|\right))}
\end{align}
are Ricci solitons, defined respectively on the half cylinders $\pm s > 2t$ and related by $-h_{+}(-t) = \hhm$. Via time dependent translations in $s$ they are diffeomorphic images of the fixed metrics $g_{\pm} = h_{\pm}(0)$. The metrics $g_{+} = - g_{-}$ and $g_{-}$ are defined, respectively, on the complement $\com \setminus \disc$ of the unit disk, and the punctured disk $\disc \setminus \{0\}$. The metric $g_{-}$ blows up as $z \to 0$ or $|z| \to 1$, so extends smoothly to no larger domain, as asserted above. By Lemma \ref{ricsollemma}, $g_{-}$ is a steady gradient Ricci soliton with potential equal to twice the moment map $\mom = \tfrac{1}{2}\log |s| = \tfrac{1}{2}\log|\log|z||$. The solution $(g_{+}, Y)$ of \eqref{vlike} on $\com \setminus \disc$ corresponds under the map $z \to 1/z$ to the solution $(g_{-}, -Y)$ of \eqref{vlike} on $\disc \setminus\{0\}$ obtained by replacing $Y$ by $-Y$. The curvature $\sR_{g_{-}} = \fd_{g_{-}} = 2s^{-1}$ is negative on the entire domain of $g_{-}$, but is not bounded from below; since $Y$ has no zero, this conclusion also follows from \eqref{ub1} of Lemma \ref{squaresconstantlemma}. Under the change of variables $q = (-s)^{1/2}$, $g_{-}$ becomes $2(dq^{2} + 4^{-1}q^{-2}dr^{2})$. In this form (modulo notation) this metric appears on p.$15$ of section $1.3.3$ of \cite{ricciflowii}. An integral curve $(r(x), s(x))$ of the geodesic vector field $U = -|Y|^{2}_{g_{-}}JY = (-2s)^{1/2}\pr_{s}$ such that $s(0) = s_{0}$ is given by $r(x) = r(0)$ and $s(x) = -(\sqrt{-2s_{0}}- x)^{2}/2$. Since this exists only for $x \in (-\infty, \sqrt{-2s_{0}})$, $g_{-}$ is incomplete.

\subsubsection{Case \texorpdfstring{$\sp = 0$}{sigma = 0} and  \texorpdfstring{$\rho \neq 0$}{rho = 0}}\label{cigarsection}
Suppose $\sp = 0$ and $\rho = \tau^{2}/4 > 0$. Equation \eqref{weq} reduces to $4w_{s}^{2} = \tau^{2}(w - \tau/4)^{2}$. The solution is $w(t, s) = 4\tau^{-1} + a(t) e^{\pm \tau s/2}$ for some function $a(t)$. Replacing $s$ by $-s$ corresponds to replacing $Y$ by $-Y$; since this can be done a posteriori, it suffices to consider $w(t, s) = 4\tau^{-1} + a(t) e^{-\tau s/2}$. Since $w$ solves \eqref{wt}, there must hold $a_{t} = \tau t$, so $a = Ae^{\tau t}$ for some $A\in \rea$. The trivial solution $w = 4/\tau$ corresponds to rescaling a flat metric, so $A$ can be supposed to be nonzero. In this case, by replacing $s$ by a translate, $A$ can be rescaled as desired, so $w$ can be supposed to have the form $W = \tau^{-1}(4 + \ep e^{\tau(t - s/2)})$, where $\ep = \pm 1$. There result the Ricci flows
\begin{align}\label{uspzero}
h_{\ep}(t) = \frac{\tau(dr^{2} + ds^{2})}{4\left(1 + \ep e^{\tau(t  -  s/2)}\right)} = \frac{\tau|z|^{\tau/2 - 2}|dz|^{2}}{4(|z|^{\tau/2} +\ep e^{\tau t})} = \frac{\tau|dz|^{2}}{4|z|^{2}(1 + \ep e^{\tau t }|z|^{-\tau/2})}.
\end{align}
Via time dependent translations in $s$, the metrics $\hhpm$ are diffeomorphic images of the steady gradient Ricci solitons $g_{\pm} = h_{\pm}(0)$. 

It remains to analyze the dependence on $\ep$ and $\tau$, and the largest domains of definition of the resulting metrics. If $\ep = 1$ then it must be that $\tau > 0$, in which case $h_{+}$ is defined at least in the punctured plane. If $\ep = -1$ then $h_{-}$ is defined at least on $|z| > e^{2t}$. To analyze the behavior as $s \to \infty$, take $\tilde{z} = -z^{-1}$; in this coordinate the metric $h_{\ep}(t)$ has the form
\begin{align}\label{cigar}
\frac{\tau|d\tilde{z}|^{2}}{4|\tilde{z}|^{2}(1 + \ep e^{\tau t}|\tilde{z}|^{\tau/2})}.
\end{align}
Hence, in the case $\ep = 1$ and $\tau > 0$ and the case $\ep = -1$ and $\tau > 0$ the metric $\hhp$ and the metric $\hhm$, respectively, are defined on $|z| > e^{2t}$ and have cusps at infinity. On the other hand, in the case $\ep = -1$ and $\tau < 0$, the metric $\hhm$ is defined on $|z| > e^{2t}$ and has at the point at infinity a cone point of angle $-\pi\tau/2$. In particular, when $\tau = -4$, the metric $\hhm$ extends smoothly at the point at infinity. In the coordinate $\tilde{z}$, the metric $h_{-}(0)$ has the form $(1 - |\tilde{z}|^{2})^{-1}|d\tilde{z}|^{2}$. Writing $\tilde{z}= \sin(q)e^{\j \theta}$, gives $h_{-}(0) = dq^{2} + \tan^{2}(q)d\theta^{2}$, from which it is apparent that the metric $h_{-}(0)$ is the incomplete metric called the \textit{exploding soliton} in section $1.3.3$ of \cite{ricciflowii}. For $\tau \neq -4$, the metrics $\hhm$ are exploding solitons with conical singularities at the origin.

When $\ep = 1$, it follows from the second expression of \eqref{uspzero} that $\hhp$ has at $z = 0$ (when $s \to -\infty$) a conical singularity with angle $\pi\tau/2$. In particular, in the case $\tau = 4$ the metric $\hhp$ extends smoothly across $z = 0$; the resulting metric is the well known \textit{cigar soliton} of \cite{Hamilton-riccisurfaces}. For other values of $\tau$ these metrics are cigars with conical singularities at the tips.  

In all cases,
\begin{align}
\begin{split}
\sR_{\hh}  = -\fd_{\hh} &= \frac{\tau\ep e^{\tau t }}{e^{\tau s/2} + \ep  e^{\tau t }} = \frac{\tau}{1 + \ep e^{-\tau (t - s/2)}}= \frac{\tau\ep e^{\tau t }}{|z|^{\tau/2} + \ep  e^{\tau t}}.
\end{split}
\end{align}
If $\ep = 1$ then $\tau \geq \sR_{h} > 0$ with the limiting values $\tau$ and $0$ approached as $s \to -\infty$ and $s \to \infty$, respectively. In particular in this case $\sR_{h}$ is positive and bounded. If $\ep = -1$ then $\sR_{h}$ is negative and unbounded from below, approaching $0$ as $s \to \infty$ and approaching $-\infty$ as $s \to 2t$. Note also that $\sR_{h}$ extends smoothly at the cone point in both the $\ep = 1$ and the $\tau < 0$ cases.

Consider the case $\ep = 1$ and write $\hh = \hhp$. Consider the metric $\tilde{h}(\tilde{t})$, defined as in \eqref{uspzero}, but with respect to coordinates $(\tilde{r}, \tilde{s})$ in place of $(r, s)$, with $\tilde{\tau}$ in place of $\tau$, and with time parameter $\tilde{t}$. Let $Y = \pr_{s}$ and $\tilde{Y} = \pr_{\tilde{s}}$. Suppose $\tilde{\tau} = 1$ and define a diffeomorphism by $(\tilde{r}, \tilde{s}) = \phi(r, s) = (\tau r, \tau s)$. Set $\tilde{t} = \tau t$. It is easily checked that $\phi^{\ast}(\hh) = \tau^{-1}\tilde{h}(\tilde{t})$ and $\phi^{\ast}(Y) = \tau \tilde{Y}$. Hence the pulled back pair $\phi^{\ast}(\hh, Y) = (\phi^{\ast}\hh, \phi^{\ast}Y) = (\tau^{-1}\tilde{h}(\tilde{t}), \tau \tilde{Y})$ is scaling equivalent to the pair $(\tilde{h}(\tilde{t}), \tilde{Y})$. This shows that the solutions of \eqref{vlike} given by $\hhp$ for different values of $\tau$ are scaling equivalent when viewed as solutions on the punctured disk. On the other hand, if these solutions are regarded as metrics on the disk with conical singularities at the origin, then they are not equivalent, because when viewed as a map on the punctured plane $\phi$ is not a diffeomorphism and does not in general extend smoothly through the puncture.

By Theorem $26.3$ of \cite{Hamilton-formation}, a complete Ricci soliton on a surface having bounded curvature assuming somewhere its maximum is diffeomorphic to the cigar soliton. Hamilton's characterization of the cigar solition has been improved by P. Daskalopoulos and N. Sesum who in \cite{Daskalopoulos-Sesum} proved that a complete ancient Ricci flow on a surface must be a cigar soliton if it has bounded positive curvature and bounded width (in a sense defined in \cite{Daskalopoulos-Sesum}). Hamilton's way of constructing the cigar soliton in \cite{Hamilton-riccisurfaces} also yields examples on orbifolds, as was developed by L.-F. Wu in \cite{Wu-orbifolds}.
After the first version of the present article was posted there appeared \cite{Bernstein-Mettler} and \cite{Ramos} addressing the classification of gradient solitons on surfaces. Most of the metrics described above appear in some equivalent form in at least one of \cite{Bernstein-Mettler} or \cite{Ramos}. The article \cite{Ramos} shows that the a complete steady gradient Ricci soliton on a surface with curvature bounded from below is either a flat surface (possible with cone points) or one of the (possibly conical) cigar solitons \eqref{cigar}.

\subsection{Cases with \texorpdfstring{$\sp \neq 0$}{sigma = 0} and \texorpdfstring{$\rho = 0$}{rho = 0}}
When $\sp \neq 0$ it is convenient to separate the cases $\rho = 0$ and $\rho \neq 0$.
Suppose $\sp \neq 0$ and $\rho = 0$, so that $\tau^{2} = \sp$. Since $\sp \neq 0$ this implies that $\sp > 0$. After an appropriate shift in $t$ it can be supposed that $\sp(t) = t^{-2}$ and $\tau(t) = -t^{-1}$. By \eqref{wss}, $q = -2^{-1}\tau s^{2} + as + b$ for some real functions $a(t)$ and $b(t)$. From \eqref{weq} it follows that $4\varep = a^{2} + 2\tau b$. Hence $w =  -2^{-1}\tau s^{2} + as + (4\varep - a^{2})/(2\tau) =  -2^{-1}\tau(s - a/\tau)^{2} + 2\varep/\tau$. That $w$ solve \eqref{wt} forces $a_{t} = \tau a$, which has the solution $\al \tau$ for some $\al \in \rea$. Hence $w =  -2^{-1}\tau((s - \al)^{2} - 4\varep \tau^{-2})$. Consequently, replacing $s$ its (time independent) translate $s + \al$, it may be supposed that $a = 0$, so that $b = 2\varep \tau^{-1}$ and $w = -2^{-1}\tau(s^{2} - 4\varep\tau^{-2})$. 

\subsubsection{Case \texorpdfstring{$\sp > 0$}{sigma > 0}, \texorpdfstring{$\rho = 0$}{rho = 0}, \texorpdfstring{$\la = \j$}{lambda = i}}
It follows from \eqref{sitau} that if $\varep = -1$ and $\tau > 0$ then $Y$ is identically $0$, so this case can be excluded. Hence if $\varep = -1$ it can be supposed $\tau < 0$ so that $\tau = -t^{-1}$ and $t > 0$. In this case $w =  -2^{-1}\tau(s^{2} + 4\tau^{-2}) = 2^{-1}t^{-1}(s^{2} + 4t^{2})$ has no real roots, so is positive for all $s \in \rea$. This yields the metrics
\begin{align}\label{rhozeronoroots}
\hh = \tfrac{2t(dr^{2} + ds^{2})}{s^{2} + 4t^{2}} = \tfrac{2t|dz|^{2}}{|z|^{2}((\log|z|)^{2} + 4t^{2})},
\end{align}
defined for $t > 0$ on the punctured plane. From \eqref{rhozeronoroots} it is apparent that $\hh$ blows up as $|z| \to 0$ or $|z| \to \infty$, so that $\hh$ extends to no larger domain. Since $\hh$ is bounded from below by the complete constant curvature metric $t s^{-2}(dr^{2} + ds^{2})$ outside the compact annulus $|s| \leq 2t$, the metric $\hh$ is complete for each $t > 0$. The metrics \eqref{rhozeronoroots} constitute a complete immortal Ricci flow on the punctured plane. The curvature of \eqref{rhozeronoroots} satisfies   
\begin{align}
-t^{-1} < \sR_{\hh} = \tfrac{1}{t}\tfrac{4t^{2} - s^{2}}{4t^{2} + s^{2}} \leq t^{-1},
\end{align}
with equality on the right-hand side along the unit circle $s = 0$. The curvature is positive when $|s| < 2t$ and negative when $|s| > 2t$. As $t\to \infty$ the metrics $\hh$ converge pointwise to a flat metric on the cylinder, while as $t\to 0$, the homothetic metrics $\kk = \sqrt{\sp(t)}\hh$ converge pointwise to the complete scalar curvature $-1$ metric on the punctured plane. Each $\hh$ has finite total absolute curvature, $\int_{\com \setminus \{0\}}|\sR_{h}|\om_{h} = 4\pi t^{-1}$, while its total curvature $\int_{\com \setminus \{0\}} \sR_{h} \om_{h}$ is $0$.

\subsubsection{Case \texorpdfstring{$\sp > 0$}{sigma > 0}, \texorpdfstring{$\rho = 0$}{rho = 0}, \texorpdfstring{$\la = 1$}{lambda = 1}}
In the $\varep = 1$ case, $\tau = - t^{-1}$ and so $w = 2^{-1}t^{-1}(s^{2} - 4t^{2})$. For $t < 0$, $\tau(t) = -t^{-1}$ is positive, and $w$ is positive on the annulus $2t = -2\tau^{-1} < s < 2\tau^{-1} = -2t$. The metrics 
\begin{align}\label{vep1}
\hh = \tfrac{-2t(dr^{2} + ds^{2})}{4t^{2} - s^{2}} = \tfrac{-2t|dz|^{2}}{|z|^{2}(4t^{2} - (\log|z|)^{2})},
\end{align}
are an ancient Ricci flow defined for $t < 0$ on the annulus $s <2|t|$. The curvature of \eqref{vep1} is
\begin{align}\label{cvep1}
\sR_{\hh} = \tfrac{1}{t}\tfrac{4t^{2} + s^{2}}{4t^{2} - s^{2}}  \leq \min\{\tau, -\sqrt{\sp}\} = -|t|^{-1} < 0
\end{align}
with equality on the right-hand side when $s = 0$. When $s \to \pm 2t$, $\sR_{h} \to -\infty$, so that $\sR_{h}$ is strictly negative and unbounded from below. In this case $\hh$ is not complete. An integral curve of $-|Y|^{-1}_{\hh}JY = u^{-1/2}\pr_{s}$ is a geodesic, and such a curve can be parameterized as $(r(x), s(x)) = (0, 2|t|\sin(x/\sqrt{2|t|}))$, which evidently exists only for $x^{2} < -\pi^{2}t/2$.

Finally, suppose $\varep = 1$ and $\tau < 0$. The metrics \eqref{vep1} are defined for all $t > 0$ on $|s| > 2t$. From \eqref{vep1} it is apparent that $\hh$ blows up when $s \to \pm\infty$, so that the maximal connected domains of definition of $\hh$ are the half-infinite cylinders constituting $|s| > 2t$, each of which is biholomorphic to the punctured disk. These components are interchanged by the map $s \to -s$, which maps the solution $(\hh, Y)$ on one component to the solution $(\hh, -Y)$ on the other component. Its curvature is as in \eqref{cvep1}. An integral curve of $|Y|^{-1}_{\hh}JY = -u^{-1/2}\pr_{s}$ is a geodesic. As such a curve can be parameterized as $r(x) = 0$ and $s(x) = -2t\cosh((c - x)/\sqrt{2t})$ for $x \in (-\infty, c)$, where $s(0) = s_{0} < -2t$ and $\cosh (c/\sqrt{2t}) = -s_{0}/(2t)$, $\hh$ is not complete.
Thus $\hh$ is an immortal solution to the Ricci flow with negative curvature unbounded from below. As $t \to 0$ the rescaled metric $\kk = \sqrt{\sp}(t)\hh$ tends pointwise to the constant scalar curvature $-1$ metric $2s^{-2}(dr^{2} + ds^{2})$.

\subsection{Generalities related to the cases where \texorpdfstring{$\sp \neq 0$}{sigma not 0} and \texorpdfstring{$\rho \neq 0$}{rho not 0}}\label{rhonotzerosection}
Suppose $\sp \neq 0$ and $\rho \neq 0$. Then $\tau$ and $\sp$ have one of the forms \eqref{taut1}-\eqref{tautneg}. Then $w$ has the form
\begin{align}\label{wrhop0}
w = \rho^{-1}\tau + A\cosh \sqrt{\rho}s + B\sqrt{\sign(\rho)}\sinh \sqrt{\rho}s,
\end{align}
where $A(t)$ and $B(t)$ are real functions of $t$, $\sqrt{\sign(\rho)}$ means $1$ or $\j$ as $\rho$ is positive or negative, and $\sqrt{\rho}$ means $\sqrt{\sign(\rho)}\sqrt{|\rho|}$. Substituting \eqref{wrhop} into \eqref{weq} yields $\sp = \rho^{2}(A^{2} - \sign(\rho)B^{2})$. In particular, the assumption $\sp \neq 0$ precludes the simultaneous vanishing of both $A$ and $B$. That $w$ satisfy \eqref{wt} forces $A_{t} = \tau A$ and $B_{t} = \tau B$, from which it follows that there are constants $a, b \in \rea$ such that $A = a\rho^{-1}|\sp|^{1/2}$ and $B = b\rho^{-1}|\sp|^{1/2}$, so that
\begin{align}\label{wrhop}
w = \rho^{-1}\left(\tau + |\sp|^{1/2}\left(a\cosh \sqrt{\rho}s + b\sqrt{\sign(\rho)}\sinh \sqrt{\rho}s \right)\right),
\end{align}
and $a^{2} - \sign(\rho)b^{2} = \sign(\sp)$. If $\sp > 0$ and $\rho > 0$ then $a^{2} - b^{2} = 1$ so there are a unique real number $q$ and a sign $\ep = \pm 1$ such that $\ep\cosh q = a$ and $\ep\sinh q = b$. Hence $w = \rho^{-1}\left(\tau + \ep|\sp|^{1/2}\cosh (\sqrt{\rho}s + q )\right)$. If $\sp > 0$ and $\rho < 0$ then $a^{2} + b^{2} = 1$ so there is a unique $q \in [0, 2\pi)$ such that $-\cos q = -\cosh\sqrt{\sign(\rho)}q = a$ and $-\sin q = -\sqrt{\sign(\rho)}\sinh \sqrt{\sign(\rho)}q = b$ (the choice of sign is arbitrary). Hence $w = \rho^{-1}\left(\tau - |\sp|^{1/2}\cosh (\sqrt{\rho}s + \sqrt{\sign(\rho)}q )\right)$. If $\sp < 0$ then, by Lemma \ref{spneglemma}, $\rho > 0$, and so $b^{2} - a^{2}= 1$ and there are a unique real number $q$ and a sign $\ep = \pm 1$ such that $\ep\cosh q = b$ and $\ep\sinh q = a$. Hence $w = \rho^{-1}\left(\tau +\ep |\sp|^{1/2}\sinh (\sqrt{\rho}s + q )\right)$. In all cases, after a translation in $s$ it can be supposed that $q = 0$. In the case $\sp < 0$, the sign $\ep$ can be eliminated by replacing $s$ by $-s$; although this corresponds to replacing $Y$ by $-Y$, no generality is lost because the discarded solution can be recovered a posteriori. There result for $w$ the following forms.
\begin{align}\label{wrho}
w = \begin{cases}
 \rho^{-1}\left(\tau + \ep \sqrt{\sp}\cosh \sqrt{\rho}s \right) & \,\,\text{if}\,\, \sp > 0 \,\,\text{and}\,\, \rho > 0,\\
 \rho^{-1}\left(\tau - \sqrt{\sp}\cos \sqrt{|\rho|}s \right) & \,\,\text{if}\,\, \sp > 0 \,\,\text{and}\,\, \rho < 0,\\
 \rho^{-1}\left(\tau + |\sp|^{1/2}\sinh \sqrt{\rho}s \right)& \,\,\text{if}\,\, \sp < 0.
\end{cases}
\end{align}
The $\ep = -1$ case of the first expression in \eqref{wrho} and the second expression in \eqref{wrho} are really the same; if $\rho$ is negative, then the first expression gives the second via the identity $\cosh \j x = \cos x$. The expressions for $\tau$ and $\sp$ can be taken as in \eqref{taut1}-\eqref{tautneg}, depending on the value of $\varep$. The various possibilities are analyzed in more detail in the sections to follow.

Note that the metrics \eqref{uspzero} in the $\sp = 0$ case arise from the $A = \pm B$ case of \eqref{wrhop0}. 

The ansatz $u = 2\la^{2}(a(t) + b(t)\cosh{2\la s})^{-1}$ for solutions of $u_{t} = (\log u)_{ss}$, in which $\la$ is either $1$ or $\j$ and $a(t)$ and $b(t)$ are real functions defined on some connected open subset of $\rea$, was used by Fateev-Onofri-Zamolodchikov in \cite{Fateev-Onofri-Zamolodchikov} to find solutions to the Ricci flow (regarded in \cite{Fateev-Onofri-Zamolodchikov} as the one-loop approximation to the renormalization group flow). In \cite{Fateev-Onofri-Zamolodchikov} the extra generality of the parameter $\la$ was not needed because solutions with $\la = \j$ were excluded by physical considerations (in this regard see \cite{Bakas}). 

\subsection{Cases with \texorpdfstring{$\sp > 0$}{sp > 0}}
Combining \eqref{sptau} and \eqref{wrho} shows that when $\sp > 0$ case the Ricci flow $\hh$ such that $(\hh, Y)$ solves \eqref{vlike} is given by
\begin{align}\label{usppos}
\begin{split}
\hh &= \frac{-\sqrt{\rho}\sinh (2\la \sqrt{\rho}t)(dr^{2} + ds^{2})}{2\la\left(\cosh 2\la \sqrt{\rho}t  + \ep \cosh \sqrt{\rho}s \right)} = \frac{-\ep\sqrt{\rho}\sinh (2\la \sqrt{\rho} t)|z|^{\sqrt{\rho}-2}|dz|^{2}}{\la \left(|z|^{2\sqrt{\rho}} + 2\ep \cosh (2\la \sqrt{\rho}t)|z|^{\sqrt{\rho}}  +  1\right)}.
\end{split}
\end{align}
The expression \eqref{usppos} encodes various qualitatively different metrics, depending on the values of the various parameters. In particular it is convenient to separate the cases $\rho > 0$ and $\rho < 0$. These cases are detailed separately in sections \ref{sppp} and \ref{sppn}.

When the coordinate $z$ is replaced by $\tilde{z} = -z^{-1}$ the form of the last expression in \eqref{usppos} is unchanged. It follows that, in the cases where it makes sense, the metric $\hh$ has at the origin $z = 0$ or at the point at infinity a conical singularity with angle $\pi\sqrt{\rho}$.

The scalar curvature of \eqref{usppos} is
\begin{align}\label{uspposcurv}
\begin{split}
\sR_{\hh} &= \frac{\sp + \ep \tau \sqrt{\sp}\cosh \sqrt{\rho}s}{\tau + \ep\sqrt{\sp}\cosh \sqrt{\rho}s}
%= \frac{\ep\sqrt{\sp}((\tau/ \sqrt{\sp})\cosh \sqrt{\rho}s + \ep)}{\tau/\sqrt{\sp} + \ep\cosh \sqrt{\rho}s}\\&
 = \frac{-2\ep \la \sqrt{\rho}}{\sinh 2\la\sqrt{\rho}t}\frac{\cosh (2\la\sqrt{\rho})t\cosh \sqrt{\rho}s + \ep}{\cosh 2\la \sqrt{\rho}t + \ep\cosh \sqrt{\rho}s}.
\end{split}
\end{align}
Similarly,
\begin{align}\label{uspposf}
\fd_{\hh} = \frac{2\ep\sqrt{\sp}\sqrt{\rho} \sinh \sqrt{\rho}s}{\tau + \ep\sqrt{\sp}\cosh\sqrt{\rho}s} = \frac{2\ep\sqrt{\rho}\sinh\sqrt{\rho}s}{\cosh 2\la \sqrt{\rho}t + \ep \cosh \sqrt{\rho}s}.
\end{align}

%%%% eliminated because no use is made of it and it does not seem that important
%\subsubsection{}
%Certain limits of certain rescalings of the metrics \eqref{usppos} yield constant curvature metrics flowing by homotheties via the identity
%\begin{align}
%\lim_{\rho \to 0} \tfrac{4}{\rho}u\left(\tfrac{2s}{\sqrt{|\rho|}}, t\right) = \frac{- 4t}{1 + \ep \cosh 2\sqrt{\sign(\rho)}s} = 
%\begin{cases}\tfrac{ - 2t}{\cosh^{2}(\sqrt{\sign(\rho)}s)}  & \text{if}\,\, \ep = 1,\\
%\tfrac{2t}{\sinh^{2}(\sqrt{\sign(\rho)}s)}  & \text{if}\,\, \ep = -1. 
%\end{cases}
%\end{align}
%The conformal factors $u$ of the limiting metrics, and the corresponding parameters, are
%\begin{align}
%\notag &\rho >0 , \ep = 1 &  & \rho  > 0, \ep = -1 & & \rho < 0, \ep = 1\\
%&- 2t\sech^{2}{s}& &2t \csch^{2}{s}& &2 t \sec^{2}{s}.
%\end{align}
%In the case $\rho> 0$ and $\ep = 1$, for all $t <0$, the metric extends to the round metric on the sphere of constant positive scalar curvature $-t^{-1}$. In the case $\rho > 0$ and $\ep = -1$, for all $t > 0$ the resulting metric on the half-space $s > 0$ has constant negative scalar curvature $t^{-1}$. In the case $\rho < 0$ and $\ep = 1$, the resulting metric on the strip $s\in (0, \pi)$ has constant negative scalar curvature $t^{-1}$.

\subsection{Cases with \texorpdfstring{$\sp > 0$}{sp > 0} and \texorpdfstring{$\rho > 0$}{rho > 0}}\label{sppp}

\subsubsection{Case \texorpdfstring{$\sp > 0$}{sigma > 0}, \texorpdfstring{$\rho > 0$}{rho > 0}, \texorpdfstring{$\la = 1$}{lambda = 1}, \texorpdfstring{$\ep = 1$}{epsilon = 1}}
The metrics
\begin{align}\label{sausage}
\hh = \frac{-\sqrt{\rho}\sinh(2\sqrt{\rho}t)(dr^{2} + ds^{2})}{2\left(\cosh 2\sqrt{\rho}t + \cosh \sqrt{\rho}s\right)} = \frac{-\sqrt{\rho}\sinh (2\sqrt{\rho}t)|z|^{\sqrt{\rho}-2}|dz|^{2}}{|z|^{2\sqrt{\rho}} + 2\cosh (2\sqrt{\rho}t)|z|^{\sqrt{\rho}}  +  1}
\end{align}
constitute an ancient Ricci flow, being defined for $t \in (-\infty, 0)$. They extend to all of $\sphere$ with conical singularities of angle $\pi \sqrt{\rho}$ at the origin and the point at infinity, which are the zeros of $Y$. In the particular case $\rho = 4$, the metrics $\hh$ extend smoothly to all of $\sphere$. These metrics are often called the \textit{King-Rosenau} metrics because the corresponding solutions of the logarithmic diffusion equation were found by P. Rosenau in \cite{Rosenau} and J.~R. King in \cite{King-exactpolynomial}. These metrics were found independently by V. Fateev, E. Onofri, and A.~B. Zamolodchikov in \cite{Fateev-Onofri-Zamolodchikov}, who used the more descriptive appellation \textit{sausage metric} used here. The main theorem of \cite{Daskalopoulos-Hamilton-Sesum} shows that an ancient solution to the Ricci flow on a compact surface is diffeomorphsim equivalent to either a contracting sphere or the sausage metrics. 

By \eqref{usppos}, the curvature $\sR_{h}$ is
\begin{align}\label{sausagecurvature}
\begin{split}
\sR_{\hh}  &= \frac{-2\sqrt{\rho}}{\sinh 2\sqrt{\rho}t}\frac{\cosh (2\sqrt{\rho}t)\cosh \sqrt{\rho}s + 1}{\cosh 2 \sqrt{\rho}t + \cosh \sqrt{\rho}s}.
\end{split}
\end{align}
It extends smoothly to all of $\sphere$ and, by \eqref{ub1} of Lemma \ref{squaresconstantlemma}, it is strictly positive, satisfying
\begin{align}\label{sausbound}
0 <  -2\sqrt{\rho}\csch(2\sqrt{\rho} t) = \sqrt{\sp(t)} \leq \sR_{\hh} \leq \tau(t) =  -2\sqrt{\rho}\coth(2\sqrt{\rho}t),
\end{align}
with equality on the left-hand side exactly along the equatorial geodesic $s = 0$ where $\fd_{h}$ vanishes, and with equality on the right-hand side exactly where $Y$ vanishes, at the cone points.

By \eqref{sausbound} the homothetic metrics $\kk = \tau(t)\hh = -2\sqrt{\rho}\coth(2\sqrt{\rho}t)\hh$
%%%% explicit expression omitted to save space
%\begin{align}
%\kk = \tau(t)\hh = \frac{\rho\cosh(2\sqrt{\rho}t)(dr^{2} + ds^{2})}{\cosh 2\sqrt{\rho}t + \cosh \sqrt{\rho}s} = \frac{2\rho\cosh (2\sqrt{\rho}t)|z|^{\sqrt{\rho}-2}|dz|^{2}}{|z|^{2\sqrt{\rho}} + 2\cosh (2\sqrt{\rho}t)|z|^{\sqrt{\rho}}  +  1}
%\end{align}
have curvature satisfying $0 < \sech 2\sqrt{\rho}t \leq \sR_{\kk} \leq 2$.  As $t \to 0$ the metrics $\kk$ converge pointwise to the constant curvature $1$ conical metric \eqref{rhospheremetric} on $\sphere$ having two cone points of angle $\pi\sqrt{\rho}$. As $t \to -\infty$, the metrics $\kk$ converge pointwise to the flat metric $\rho(dr^{2} + ds^{2})$ on the punctured plane. 

\subsubsection{Case \texorpdfstring{$\sp > 0$}{sigma > 0}, $\rho > 0$, $\la = \j$, $\ep = \pm 1$} 
As will be explained next, if $\sp > 0$, $\rho > 0$, and $\la = \j$, since $\cos(\pi - x) = -\cos x$ and $\sin(\pi - x) = \sin x$, the two cases obtained by taking $\ep = 1$ or $\ep = -1$ are equivalent up to an orientation-reversing unimodular transformation of $t$, and so it suffices to consider the case $\ep = 1$. In the case $\ep = 1$, the metrics
\begin{align}\label{ewmetric}
\hh =  \frac{-\sqrt{\rho}\sin(2\sqrt{\rho}t)(dr^{2} + ds^{2})}{2\left(\cos 2\sqrt{\rho}t + \cosh \sqrt{\rho}s\right)}  = \frac{-\sqrt{\rho}\sin (2 \sqrt{\rho}t)|z|^{\sqrt{\rho}-2}|dz|^{2}}{|z|^{2\sqrt{\rho}} + 2 \cos (2 \sqrt{\rho}t)|z|^{\sqrt{\rho}}  +  1}
\end{align}
are defined for $\sqrt{\rho}t \in (-\pi/2, 0)$. They extend to all of $\sphere$ with conical singularities of angle $\pi \sqrt{\rho}$ at the origin and the point at infinity, which are the zeros of $Y$. When $\rho = 4$ the metrics $\hh$ extend smoothly to the entire two sphere. A straightforward calculation shows that the metric $h^{-}(t)$ obtained from \eqref{usppos} with $\sp > 0$, $\rho > 0$, $\la = \j$, and $\ep = - 1$ is related to $\hh$ by $h^{-}(t) = h(-t - 2^{-1}\rho^{-1/2}\pi)$. Notice that as a consequence there must hold $\sR_{h^{-}(t)} = -\sR_{h(-t - 2^{-1}\rho^{-1/2}\pi)}$, which can also be verified directly using \eqref{uspposcurv}. Although henceforth there is considered only the metric $\hh$ of the case $\ep = 1$, it is a special property of the Ricci flow $\hh$ that it remains a Ricci flow under (shifted) time reversal.

By \eqref{uspposcurv}, and \eqref{ub1b} of Lemma \ref{squaresconstantlemma}, the curvature of \eqref{ewmetric} satisfies
\begin{align}\label{ewcurvature}
\begin{split}
-2\sqrt{\rho}\cot 2\sqrt{\rho}t = \tau(t) \leq \sR_{\hh} & = \frac{-2\sqrt{\rho}}{\sin 2\sqrt{\rho}t}\frac{\cos (2\sqrt{\rho}t)\cosh \sqrt{\rho}s + 1}{\cos 2\sqrt{\rho}t + \cosh \sqrt{\rho}s}\leq \sqrt{\sp(t)} = \frac{-2\sqrt{\rho}}{\sin 2\sqrt{\rho}t}.
\end{split}
\end{align}
Although $\sR_{\hh} > 0$ if $\sqrt{\rho}t \in (-\pi/4, 0)$, it is somewhere negative for $\sqrt{\rho}t \in ( -\pi/2, - \pi/4)$. It attains its maximum along the equatorial geodesic circle $s = 0$ where $\fd_{\hh}$ vanishes, and tends to its minimum at the cone points, when $s \to \pm \infty$. 
From \eqref{ewcurvature} it is apparent that $\sR_{h}$ is positive exactly where $\cosh \sqrt{\rho}s < -\sec 2\sqrt{\rho}t$. Solving this inequality for $e^{\sqrt{\rho}s}$ yields the equivalent inequalities
\begin{align}\label{ew1}
 -\sec 2\sqrt{\rho}t + \tan 2\sqrt{\rho}t  < e^{\sqrt{\rho}s} <  -\sec2\sqrt{\rho}t - \tan 2\sqrt{\rho}t .
\end{align}
Simplifying \eqref{ew1} using the identity $\tan((a + b)/2) = (\sin a + \sin b )/(\cos a + \cos b)$ with $b = \pi/2$ and $a = -2\sqrt{\rho}t$ yields that, for $\sqrt{\rho}t \in ( -\pi/2, - \pi/4)$ the curvature is positive on the equatorial band $|s| < \rho^{-1/2}\log \tan|\sqrt{\rho}t + \pi/4|$.  %\rho^{-1/2}\log \tan(-\sqrt{\rho}t - \pi/4).

The homothetic metrics $\kk = \sqrt{\sp}(t)\hh = -2\sqrt{\rho}\csc(2\sqrt{\rho}t)\hh$
%%%% explicit expression omitted to save space
%\begin{align}
%\kk = \sqrt{\sp(t)}\hh = \frac{\rho(dr^{2} + ds^{2})}{\cos 2\sqrt{\rho}t + \cosh \sqrt{\rho}s}  = \frac{2\rho|z|^{\sqrt{\rho}-2}|dz|^{2}}{|z|^{2\sqrt{\rho}} + 2 \cos (2 \sqrt{\rho}t)|z|^{\sqrt{\rho}}  +  1}
%\end{align}
have curvature satisfying $-2 \leq \cos 2\sqrt{\rho}t \leq \sR_{\kk} \leq 2$. As $t\to 0$ the metrics $\kk$ tend to the constant curvature $1$ conical metric \eqref{rhospheremetric} on $\sphere$, % with two cone points of angle $\pi \sqrt{\rho}$, 
while when $t \to - 2^{-1}\rho^{-1/2}\pi$ the metrics $\kk$ converge pointwise to the constant curvature $-1$ metric \eqref{rhohyperbolicmetric} defined on the disks complementary to the equator $s = 0$ and having cone points of angle $\pi\sqrt{\rho}$ at the centers of these disks. When $\rho = 4$ the metrics $\kk$ interpolate between the spherical metric and the hyperbolic metric.

\subsubsection{Case \texorpdfstring{$\sp > 0$}{sigma > 0}, $\rho > 0$, $\la = 1$, $\ep = -1$}
For the metrics
\begin{align}
\hh =  %\frac{-\sqrt{\rho}\sinh(2\sqrt{\rho}t)(dr^{2} + ds^{2})}{2\left(\cosh(2\sqrt{\rho}t) - \cosh \sqrt{\rho}s\right)} 
 \frac{\sqrt{\rho}\sinh(2\sqrt{\rho}t)(dr^{2} + ds^{2})}{2\left(\cosh \sqrt{\rho}s - \cosh 2\sqrt{\rho}t \right)}   = \frac{\sqrt{\rho}\sinh (2\sqrt{\rho}t)|z|^{\sqrt{\rho}-2}|dz|^{2}}{|z|^{2\sqrt{\rho}} - 2 \cosh (2\sqrt{\rho}t)|z|^{\sqrt{\rho}}  +  1}
\end{align}
there are two cases, distinguished by the sign of $\tau(t) = -2\sqrt{\rho}\coth 2\sqrt{\rho}t $. When $\tau(t)$ is positive, $\hh$ is defined on the annulus $- 2t > |s|$ for $t < 0$. When $\tau$ is negative, $\hh$ is defined on $2t < |s|$ for $t > 0$, and has cone points with angle $\pi\sqrt{\rho}$ when $s \to \pm \infty$, so is the disjoint union of two infinite components each with a form something like a capped funnel (they are smooth when $\rho = 4$). In the $\tau < 0$ case the map sending $s$ to $-s$ (equivalently $z$ to $-z^{-1}$) extends to an involution interchanging the two connected components. 

In both cases the curvature
\begin{align}\label{iicurvature}
\begin{split}
\sR_{\hh} & = \frac{2\sqrt{\rho}}{\sinh 2\sqrt{\rho}t }\frac{\cosh (2\sqrt{\rho}t)\cosh \sqrt{\rho}s - 1}{\cosh 2\sqrt{\rho}t - \cosh \sqrt{\rho}s},
%& = \frac{-2\sqrt{\rho}}{\sinh (2\sqrt{\rho}t)}\frac{1- \cosh (2\sqrt{\rho}t)\cosh \sqrt{\rho}s}{\cosh (2\sqrt{\rho}t) - \cosh \sqrt{\rho}s},
\end{split}
\end{align}
is strictly negative on the domain of definition of $\hh$. It is not bounded from below for it tends to $-\infty$ as $|s| \to 2|t|$. By \eqref{ub1} of Lemma \ref{squaresconstantlemma}, in the $\tau > 0$ case, $\sR_{h} \leq -\sqrt{\sp} = 2\sqrt{\rho}\csch 2\sqrt{\rho}t$, with equality along the equatorial geodesic $s =0$, where $\fd_{\hh} = 0$.
Since $\sp = \tau^{2} + 4\rho \geq \tau^{2}$, when $\tau < 0$ there holds $\tau < -\sqrt{\sp}$. Hence, by \eqref{ub1} of Lemma \ref{squaresconstantlemma}, in the $\tau < 0$ case, $\sR_{h} \leq \tau = -2\sqrt{\rho}\coth 2\sqrt{\rho}t$, with equality when $s \to \pm \infty$, that is, at the cone points.

When $\tau < 0$ the homothetic metrics $\kk = \sqrt{\sp(t)}\hh = 2\sqrt{\rho}\csch(2\sqrt{\rho}t)\hh$
%%%% omitted to save space - save
%\begin{align}
%\kk = \sqrt{\sp(t)}\hh =  \frac{\rho(dr^{2} + ds^{2})}{\cosh 2\sqrt{\rho}t - \cosh \sqrt{\rho}s}  = \frac{2\rho|z|^{\sqrt{\rho}-2}|dz|^{2}}{|z|^{2\sqrt{\rho}} - 2 \cosh (2\sqrt{\rho}t)|z|^{\sqrt{\rho}}  +  1}
%\end{align}
converge pointwise, as $t$ tends to $0$ from above, to the constant curvature $-1$ singular metric \eqref{rhohyperbolicmetric} on the disjoint union of two disks. %On the other hand, when $t$ tends to $0$ from below, the metrics $\kk$ collapse to a point. 
When $\tau > 0$ the homothetic metrics $\tkk = \tau(t)\hh = -2\sqrt{\rho}\coth(2\sqrt{\rho}t)\hh$
%%%% omitted to save space - save
%\begin{align}
%\tkk = \tau(t)\hh =  \frac{\rho\cosh(2\sqrt{\rho}t)(dr^{2} + ds^{2})}{\cosh 2\sqrt{\rho}t - \cosh \sqrt{\rho}s}  = \frac{-2\rho\cosh(2\sqrt{\rho}t)|z|^{\sqrt{\rho}-2}|dz|^{2}}{|z|^{2\sqrt{\rho}} - 2 \cosh (2\sqrt{\rho}t)|z|^{\sqrt{\rho}}  +  1}
%\end{align}
converge pointwise, as $t$ tends to $0$ from below, to the flat cylindrical metric $\rho(dr^{2} + ds^{2})$.

\subsection{Cases with \texorpdfstring{$\sp > 0$}{sigma > 0} and \texorpdfstring{$\rho < 0$}{rho < 0}}\label{sppn}
As noted in the derivation of \eqref{wrho}, in the case $\sp > 0$,  $\rho < 0$ there is no need for the sign parameter $\ep$ that appeared in the case $\sp > 0$, $\rho > 0$. 
\subsubsection{Case \texorpdfstring{$\sp > 0$}{sigma > 0}, \texorpdfstring{$\rho < 0$}{rho < 0}, \texorpdfstring{$\la = 1$}{lambda = 1}}%, \texorpdfstring{$\ep = -1$}{epsilon = 1}} 
The metric
\begin{align}
\hh =  2^{-1}\sqrt{|\rho|}\sin(2\sqrt{|\rho|}t)\left(\cos 2\sqrt{|\rho|}t - \cos \sqrt{|\rho|}s\right)^{-1}(dr^{2} + ds^{2})
%\frac{\sqrt{|\rho|}\sin(2\sqrt{|\rho|}t)(dr^{2} + ds^{2})}{2\left(\cos(2\sqrt{|\rho|}t) - \cos \sqrt{|\rho|}s\right)}
\end{align}
can be taken to be defined for $t \in (0, 2^{-1}|\rho|^{-1/2}\pi)$ in the bounded open cylinder $s \in (2t,  2|\rho|^{-1/2}\pi - 2t)$, or for $t \in (2^{-1}|\rho|^{-1/2}\pi , |\rho|^{-1/2}\pi)$ in the bounded open cylinder $s \in (2|\rho|^{-1/2}\pi- 2t,  2t)$. However, a straightforward calculation shows $h(|\rho|^{-1/2}\pi- s, -t + 2^{-1}|\rho|^{-1/2}\pi) = h(s, t)$, so that, modulo an orientation-reversing unimodular transformation of the time parameter, these flows are equivalent modulo a reflection in $s$. For this reason, $\hh$ will be considered for $t \in (0,  2^{-1}|\rho|^{-1/2}\pi)$.

Since $\varep\rho < 0$, $-\sqrt{\sp} \leq \tau$, and so, by \eqref{ub1} of Lemma \ref{squaresconstantlemma}, the curvature is negative, satisfying
\begin{align}\label{iiicurvature}
\begin{split}
\sR_{\hh} & = \frac{2\sqrt{|\rho|}}{\sin 2\sqrt{|\rho|}t}\frac{\cos (2\sqrt{|\rho|}t)\cos \sqrt{|\rho|}s - 1}{\cos 2\sqrt{|\rho|}t - \cos \sqrt{|\rho|}s} \leq -\sqrt{\sp} = -2\sqrt{|\rho|}\csc 2\sqrt{|\rho|}t < 0.
\end{split}
\end{align}
The curvature assumes its maximum value  $-\sqrt{\sp} = -2\sqrt{|\rho|}\csc 2\sqrt{|\rho|}t$ on the equatorial circle $s = |\rho|^{-1/2}\pi$, where $\fd_{h} = 0$. It is unbounded from below, blowing up as $s \to 2t$ or $s \to 2|\rho|^{-1/2}\pi - 2t$. 
As $t \to 0$, the homothetic metrics $\kk = \sqrt{\sp}\hh =  2\sqrt{|\rho|}\csc(2\sqrt{|\rho|}t)\hh$ tend pointwise to the constant curvature $-1$ metric $|\rho|(1 - \cos \sqrt{|\rho|}s)^{-1}(dr^{2} + ds^{2})$ on the infinite cylinder.

\subsubsection{Case \texorpdfstring{$\sp > 0$}{sigma > 0}, \texorpdfstring{$\rho < 0$}{rho < 0}, \texorpdfstring{$\la = \j$}{lambda = i}}
The metric
\begin{align}
\hh =  2^{-1}\sqrt{|\rho|}\sinh(2\sqrt{|\rho|}t)\left(\cosh 2\sqrt{|\rho|}t - \cos \sqrt{|\rho|}s\right)^{-1}(dr^{2} + ds^{2}).
 %\frac{\sqrt{|\rho|}\sinh(2\sqrt{|\rho|}t)(dr^{2} + ds^{2})}{2\left(\cosh(2\sqrt{|\rho|}t) - \cos \sqrt{|\rho|}s\right)}.
\end{align}
is defined for all $t > 0$ and all $s \in \rea$, that is on the punctured plane. Because $\hh$ has period $2\pi |\rho|^{-1/2}$ in $s$, it descends to the torus $\{(r, s)\in [0, 2\pi) \times [0, 2\pi |\rho|^{-1/2})\}$. The curvature assumes both positive and negative values, and, by \eqref{ub1b} of Lemma \ref{squaresconstantlemma}, it satisfies
\begin{align}\label{toruscurvature}
\begin{split}
-\sqrt{\sp} \leq  \sR_{\hh} & = \frac{2\sqrt{|\rho|}}{\sinh 2\sqrt{|\rho|}t}\frac{\cosh (2\sqrt{|\rho|}t)\cos \sqrt{|\rho|}s - 1}{\cosh 2\sqrt{|\rho|}t - \cos \sqrt{|\rho|}s} \leq \sqrt{\sp} = 2\sqrt{|\rho|}\csch 2\sqrt{|\rho|}t,
\end{split}
\end{align}
The maximum and minimum are attained, respectively, along the geodesics $s = 0$ and $s = \pi |\rho|^{-1/2}$, where $\fd_{h}$ vanishes. This Ricci flow is interesting because it is immortal, the manifold is compact, and the $\hh$ have bounded curvature. As $t \to 0$, the homothetic metrics $\kk = \sqrt{\sp}\hh =  2\sqrt{|\rho|}\csch(2\sqrt{|\rho|}t)\hh$ on the infinite cylinder
%%% omitted to reduce space
%\begin{align}
%\kk = \sqrt{\sp}\hh =  |\rho|\left(\cosh 2\sqrt{|\rho|}t - \cos \sqrt{|\rho|}s\right)^{-1}(dr^{2} + ds^{2}),
%\end{align}
tend pointwise to the constant curvature $-1$ metric $|\rho|(1 - \cos \sqrt{|\rho|}s)^{-1}(dr^{2} + ds^{2})$ on the infinite cylinder.

\subsection{Case \texorpdfstring{$\sp < 0$}{sp < 0}}
By Lemma \ref{spneglemma}, if $\sp < 0$ then $\rho > 0$, and so $w$ has the form as in the last expression of \eqref{wrhop}, with $\sp$ and $\tau$ as in \eqref{tautneg}. Explicitly, for $t \in \rea$, the metric $\hh$ is defined on the complement $s > 2t$ of the disk of radius $e^{t}$ by
\begin{align}\label{uspneg}
\begin{split}
\hh &= \frac{\sqrt{\rho}\cosh (2\sqrt{\rho}t)(dr^{2} + ds^{2})}{2\left( \sinh \sqrt{\rho}s - \sinh 2\sqrt{\rho}t\right)} = \frac{\sqrt{\rho}\cosh (2\sqrt{\rho} t)|z|^{\sqrt{\rho} - 2}|dz|^{2}}{\left(|z|^{2\sqrt{\rho}} - 2 \sinh (2\sqrt{\rho}t)|z|^{\sqrt{\rho}}  - 1 \right)}\\
& = \frac{\sqrt{\rho}\cosh (2\sqrt{\rho}t)|\tilde{z}|^{\sqrt{\rho} - 2}|d\tilde{z}|^{2}}{\left(1 - 2 \sinh (2\sqrt{\rho}t)|\tilde{z}|^{\sqrt{\rho}} -|\tilde{z}|^{2\sqrt{\rho}}\right)} = \frac{\sqrt{\rho}\cosh (2\sqrt{\rho}t)|\tilde{z}|^{\sqrt{\rho} - 2}|d\tilde{z}|^{2}}{\left(e^{2\sqrt{\rho}t}+|\tilde{z}|^{\sqrt{\rho}}\right)\left(e^{-2\sqrt{\rho}t}  -|\tilde{z}|^{\sqrt{\rho}}\right)},
\end{split}
\end{align}
where $\tilde{z} = z^{-1}$. From the last expression in \eqref{uspneg} it is apparent that the metric $\hh$ has a conical singularity at the point at infinity with angle $\pi \sqrt{\rho}$. In particular, in the case $\rho = 4$, the metric $\hh$ extends smoothly to the point at infinity. By \eqref{ub1} of Lemma \ref{squaresconstantlemma}, the scalar curvature of the metric $\hh$ of \eqref{uspneg} satisfies
\begin{align}
\begin{split}
\sR_{\hh} & = \frac{-2\sqrt{\rho}}{\cosh 2\sqrt{\rho}t}\frac{\sinh 2\sqrt{\rho}t \sinh \sqrt{\rho}s + 1 }{\sinh \sqrt{\rho}s - \sinh 2 \sqrt{\rho} t} \leq \tau(t) = -2\sqrt{\rho}\tanh 2\sqrt{\rho}t,
\end{split}
\end{align}
and as $s \to 2t$ the curvature tends to $-\infty$, while as $s \to \infty$, it tends to $\tau(t)$. There hold
\begin{align}
&\fd_{\hh} = \frac{2\sqrt{\rho}\cosh\sqrt{\rho}s}{ \sinh \sqrt{\rho}s - \sinh 2\sqrt{\rho}t},&
&\sR_{h} \pm \fd_{h} = \frac{2\sqrt{\rho}}{\cosh 2\sqrt{\rho} t}\frac{\left(\pm \cosh (\sqrt{\rho}(s \mp 2t)) - 1\right)}{ \sinh \sqrt{\rho}s - \sinh 2\sqrt{\rho}t}.
\end{align}
By Lemma \ref{squaresconstantlemma}, $\sR_{h} + \fd_{h}$ and $\sR_{h} - \fd_{h}$ must have definite (and opposite) signs. Here $\sR_{h} + \fd_{h}$
%\begin{align}
%\sR_{h} + \fd_{h} = \frac{2\sqrt{\rho}\left(\cosh (\sqrt{\rho}(s - 2t))  - 1\right)}{\cosh (2\sqrt{\rho}t)\left(  \sinh \sqrt{\rho}s - \sinh (2 \sqrt{\rho}t)\right)}
%\end{align}
is positive and $\sR_{h} - \fd_{h}$ is negative. That here $\sR_{h} + \fd_{h}$ is positive results from the choice of sign in the construction of $\hh$ described just before \eqref{wrho}, which could be recast invariantly as demanding that $Y$ be such that the sign of $\sR_{h} - \fd_{h}$ be positive.

The Ricci flow $\hh$ is remarkable because it is eternal (exists for all time). This does not contradict the main theorem of \cite{Daskalopoulos-Sesum}, which classifies complete eternal Ricci flows on surfaces having bounded curvature and bounded width, because the curvature is unbounded from below.

%\bibliographystyle{amsplain}
%\bibliography{../master}

\def\cprime{$'$} \def\cprime{$'$} \def\cprime{$'$}
  \def\polhk#1{\setbox0=\hbox{#1}{\ooalign{\hidewidth
  \lower1.5ex\hbox{`}\hidewidth\crcr\unhbox0}}} \def\cprime{$'$}
  \def\Dbar{\leavevmode\lower.6ex\hbox to 0pt{\hskip-.23ex \accent"16\hss}D}
  \def\cprime{$'$} \def\cprime{$'$} \def\cprime{$'$} \def\cprime{$'$}
  \def\cprime{$'$} \def\cprime{$'$} \def\cprime{$'$} \def\cprime{$'$}
  \def\cprime{$'$} \def\cprime{$'$} \def\cprime{$'$} \def\cprime{$'$}
  \def\dbar{\leavevmode\hbox to 0pt{\hskip.2ex \accent"16\hss}d}
  \def\cprime{$'$} \def\cprime{$'$} \def\cprime{$'$} \def\cprime{$'$}
  \def\cprime{$'$} \def\cprime{$'$} \def\cprime{$'$} \def\cprime{$'$}
  \def\cprime{$'$} \def\cprime{$'$} \def\cprime{$'$} \def\cprime{$'$}
  \def\cprime{$'$} \def\cprime{$'$} \def\cprime{$'$} \def\cprime{$'$}
  \def\cprime{$'$} \def\cprime{$'$} \def\cprime{$'$} \def\cprime{$'$}
  \def\cprime{$'$} \def\cprime{$'$} \def\cprime{$'$} \def\cprime{$'$}
  \def\cprime{$'$} \def\cprime{$'$} \def\cprime{$'$} \def\cprime{$'$}
  \def\cprime{$'$} \def\cprime{$'$} \def\cprime{$'$} \def\cprime{$'$}
  \def\cprime{$'$} \def\cprime{$'$} \def\cprime{$'$} \def\cprime{$'$}
\providecommand{\bysame}{\leavevmode\hbox to3em{\hrulefill}\thinspace}
\providecommand{\MR}{\relax\ifhmode\unskip\space\fi MR }
% \MRhref is called by the amsart/book/proc definition of \MR.
\providecommand{\MRhref}[2]{%
  \href{http://www.ams.org/mathscinet-getitem?mr=#1}{#2}
}
\providecommand{\href}[2]{#2}

\end{document}